\documentclass[a4paper]{amsart}
\usepackage[ansinew]{inputenc}
\usepackage[T1]{fontenc}
\usepackage{longtable}
\usepackage{xcolor}
\usepackage{mathrsfs}
\usepackage{amssymb,amsmath,amsthm}
\usepackage[matrix, arrow, curve]{xy}
\usepackage{tikz-cd}
\usepackage{stmaryrd}
\usepackage{mathtools}
\usepackage{extarrows}
\usepackage{hhline}
\usepackage{hyperref}
\usepackage{thmtools} %Cref fix 2025
\usepackage[nameinlink]{cleveref}
\usetikzlibrary{quotes,babel, angles}

\newcommand{\op}{\textnormal{op}}
\newcommand{\Hom}{\textnormal{Hom}}
\newcommand{\Ext}{\textnormal{Ext}}
\renewcommand{\mod}{\textnormal{mod}\,}

\newcommand{\Mod}{\textnormal{Mod}\,}
\newcommand{\im}{\textnormal{Im}\,}

\newcommand{\gen}{\textnormal{gen}\,}
\newcommand{\cogen}{\textnormal{cogen}\,}
\newcommand{\Ab}{\textnormal{Ab}}
\newcommand{\fp}{\textnormal{fp}\,}

\newcommand{\Ind}{\textnormal{Zsp}\,}
\newcommand{\Inj}{\textnormal{Inj}\,}

\numberwithin{equation}{section} \theoremstyle{plain}
\newtheorem*{thm*}{Theorem}
\newtheorem{thm}{Theorem}
\numberwithin{thm}{section}

\newtheorem{coro}[thm]{Corollary}
\newtheorem*{coro*}{Corollary}
\newtheorem{conj}[thm]{Conjecture}
\newtheorem*{conj*}{Conjecture}
\newtheorem{lem}[thm]{Lemma}
\newtheorem*{lem*}{Lemma}
\newtheorem{prop}[thm]{Proposition}
\newtheorem*{prop*}{Proposition}
\newtheorem{rem}[thm]{Remark}
\newtheorem*{rem*}{Remark}
\newtheorem{exa}[thm]{Example}
\newtheorem*{exa*}{Example}
\newtheorem{df}[thm]{Definition}
\newtheorem*{df*}{Definition}

\newtheorem*{ques*}{Question}

\newtheorem*{construction*}{Construction}
\newtheorem*{ack*}{ACKNOWLEDGEMENTS}

\begin{document}

\title{Infinite $\tau$-tilting theory}
\author{Kevin Schlegel}
\date{}

\address{Faculty of Mathematics, Bielefeld University, 33615 Bielefeld, Germany}
\email{schlegel@math.uni-bielefeld.de}

%\date{\today}
\subjclass[2020]{16G10, 18E40, 16D90, 16G60, 18E45.}
\keywords{Torsion pair, Ziegler spectrum, $\tau$-tilting theory, cosilting, purity, exact category, fp-idempotent ideal, generic module, brick}

\begin{abstract} 
We classify torsion pairs in an essentially small abelian category through cosilting subsets of the Ziegler spectrum of the ind-completion of the abelian category. For Artin algebras, this classification is reformulated as an infinite analog of $\tau$-tilting theory, where torsion classes correspond to support $\tau$-tilting subsets of the Ziegler spectrum and torsion-free classes correspond to support $\tau^-$-tilting subsets. We further express the classification through ideals of the module category, thereby obtaining a formulation that involves finite length modules only. The developed theory is applied to study generic bricks and generic $\tau^-$-rigid modules, in particular for tame algebras, for which we show that these classes of modules coincide. We also recover a result of Bautista, P\' erez and Salmer\'on stating that a tame algebra admits infinitely many bricks of a fixed dimension if and only if there exists a generic brick. Finally, we prove that every algebra whose Krull-Gabriel dimension is defined satisfies the brick version of the second Brauer-Thrall conjecture.
\end{abstract}
\maketitle \section*{Introduction}
Torsion pairs are important objects in the representation theory of finite dimensional algebras (more generally, Artin algebras $A$). In the seminal work of Adachi, Iyama and Reiten \cite{air}, $\tau$-tilting theory was developed to study the functorially\linebreak finite torsion pairs inside the category $\mod A$ of finite length $A$-modules. We build on recent developments by Angeleri H\"ugel, Laking and Sentieri \cite{HLS, HLS2}, who make use of the Ziegler spectrum to study arbitrary torsion pairs in $\mod A$. Originating in model theory \cite{Ziegler}, the Ziegler spectrum $\Ind A$ is a topological space that has become an effective tool within representation theory. The points of $\Ind A$ are the indecomposable pure-injective $A$-modules, which include all indecomposable finite length $A$-modules as discrete points. Our main contribution is to show that the Auslander-Reiten translation $\tau$, which lies at the heart of $\tau$-tilting theory, can also be used to study arbitrary torsion pairs through the Ziegler spectrum, leading to what we call infinite $\tau$-tilting theory.

The Auslander-Reiten translation $\tau$ is usually considered only for finite length $A$-modules, but it was shown by Krause \cite{Krause0} that  $\tau$ naturally extends to infinite length $A$-modules and induces a homeomorphism $\tau \colon \Ind A \setminus \mathrm{Proj}\,A \rightarrow \Ind A \setminus \Inj A$. Building on this extension, we introduce the following notions, which may be viewed as infinite analogs of the corresponding concepts in classical $\tau$-tilting theory.
\begin{itemize}
    \item[(1)] A subset $U \subseteq \Ind A$ is $\tau$-\emph{rigid} if $\Hom_A^\mathrm{fin}(U, \tau U) = 0$. That is, there are no non-zero morphisms $X \to \tau Y$ with finite length image for every $X,Y \in U$. 
    \item[(2)] We call $(U,V)$ a $\tau$\emph{-rigid} pair if $U \subseteq \Ind A$ is $\tau$-rigid and $V \subseteq \mathrm{Proj}\,A$ fulfills $\Hom_A(V, U )= 0$. There is a natural order on $\tau$-rigid pairs given by $(U,V)\leq (U',V')$ if $U \subseteq U'$ and $V\subseteq V'$.
    \item[(3)] A subset $U \subseteq \Ind A$ is \emph{support $\tau$-tilting} if there exists a maximal $\tau$-rigid pair $(U,V)$. 
\end{itemize}

The point of $\tau$-rigid subsets $U \subseteq \Ind A$ is that they give rise to torsion classes inside $\mod A$. More precisely, the collection $\gen U$ of all finite length quotients of direct sums of modules in $U$ is a torsion class. One may define dual notions, which are related to torsion-free classes, using the inverse Auslander-Reiten translation $\tau^-$ instead. This leads to the following classifications.\\
\\
\textbf{Theorem A.} (\Cref{taumain}, \Cref{main2}) \emph{There are one-to-one correspondences
\begin{align*}
    \begin{Bmatrix}
        \text{torsion classes}\\
        \mathcal{T}\subseteq \mathrm{mod}\,A
    \end{Bmatrix} &\longleftrightarrow 
    \begin{Bmatrix}
        \text{support $\tau$-tilting subsets}\\
        {U}\subseteq \Ind A
    \end{Bmatrix}\\
    \begin{Bmatrix}
        \text{torsion-free classes}\\
        \mathcal{F}\subseteq \mathrm{mod}\,A
    \end{Bmatrix} &\longleftrightarrow 
    \begin{Bmatrix}
        \text{support $\tau^-$-tilting subsets}\\
        {U}'\subseteq \Ind A
    \end{Bmatrix}
\end{align*}
given by $U \mapsto  \gen U$ and $U' \mapsto \cogen U'$. Moreover, support $\tau$-tilting subsets and support $\tau^-$-tilting subsets are closed sets of $\Ind A$.}\\

The theorem above involves modules that are possibly of infinite length, which makes the situation more complicated than in classical $\tau$-tilting theory. To address this difficulty, we consider ideals of $\mathrm{mod}\,A$, that is, collections of morphisms that satisfy closure properties similar to ideals of rings. For $U \subseteq \Ind A$ let $\langle U \rangle$ be the collection of all morphisms in $\mod A$ factoring through a product of modules in $U$. Krause showed that the assignment $U \mapsto \langle U \rangle$ induces a one-to-one correspondence between closed sets $U \subseteq \Ind A$ and certain ideals $\mathcal{I}$ of $\mod A$, see \cite{Krause}. Now $U$ is $\tau$-rigid if and only if the associated ideal $\mathcal{I}$ satisfies $\tau \mathcal{I} \circ \mathcal{I} = 0$, where $\tau \mathcal{I}$ may be defined via the Auslander-Reiten translation on morphisms. Such an ideal is called \emph{$\tau$-rigid}. This naturally gives rise to the notions of \emph{support $\tau$-tilting ideals} and \emph{support $\tau^{-}$-tilting ideals}, leading to the following correspondences.\\
\\
\textbf{Theorem B.} (\Cref{mainideal}, \Cref{mainideal2}) \emph{There are one-to-one correspondences
\begin{align*}
    \begin{Bmatrix}
        \text{torsion classes}\\
        \mathcal{T}\subseteq \mathrm{mod}\,A
    \end{Bmatrix} &\longleftrightarrow 
    \begin{Bmatrix}
        \text{support $\tau$-tilting ideals}\\
        \mathcal{I}\text{ of } \mod A
    \end{Bmatrix}\\
    \begin{Bmatrix}
        \text{torsion-free classes}\\
        \mathcal{F}\subseteq \mathrm{mod}\,A
    \end{Bmatrix} &\longleftrightarrow 
    \begin{Bmatrix}
        \text{support $\tau^-$-tilting ideals}\\
        \mathcal{I}'\text{ of } \mod A
    \end{Bmatrix}
\end{align*}
given by $\mathcal{I} \mapsto  \gen \mathcal{I}$ and $\mathcal{I}' \mapsto \cogen \mathcal{I}'$.}\\

Here $\gen \mathcal{I}$ denotes the collection of all $X\in \mod A$ that admit an epimorphism $Y\to X$ in $\mathcal{I}$ and $\cogen \mathcal{I}'$ is defined dually. The following example illustrates the usefulness of the ideal approach. If $U \subseteq \Ind A$ is an infinite collection of finite length $A$-modules with closure $\bar{U}$, then the ideal $\langle \bar{U}\setminus U\rangle$ equals the collection of all morphisms that factor through a direct sum of modules in $U\setminus V$ for all finite subsets $V\subseteq U$. If $U$ is, in some sense, generically $\tau$-rigid, the ideal can easily be shown to be $\tau$-rigid and so $\bar{U}\setminus U$ is a non-empty $\tau$-rigid set consisting of infinite length modules. 

Another advantage of the ideal approach is that we can apply duality, as we are only dealing with finite length $A$-modules. This is not directly possible for infinite length $A$-modules. In fact, we first prove the $\tau^-$-part of Theorem A to deduce the $\tau^-$-part of Theorem B, implying the $\tau$-part of Theorem B via duality, which is used to show the $\tau$-part of Theorem A. Thus, the mentioned results are based on the $\tau^-$-part of Theorem A, which is a special instance of a more general classification. Namely, for an essentially small abelian category $\mathcal{C}$ we characterize torsion pairs through cosilting subsets of the Ziegler spectrum $\Ind \bar{\mathcal{C}}$ of the ind-completion $\bar{\mathcal{C}}$. For the definition of cosilting subsets, see Section 2. \\
\\
\textbf{Theorem C.} (\Cref{mai}) \emph{Let $\mathcal{C}$ be an essentially small abelian category. There is a one-to-one correspondence 
\begin{equation*}
    \begin{Bmatrix}
        \text{torsion-free classes}\\
        \mathcal{F}\subseteq \mathcal{C}
    \end{Bmatrix} \longleftrightarrow 
    \begin{Bmatrix}
        \text{cosilting subsets}\\
        {U}\subseteq \Ind \bar{\mathcal{C}}
    \end{Bmatrix}
\end{equation*}
given by $\mathcal{F} \mapsto \Inj \bar{\mathcal{F}}$ and $U \mapsto \cogen U$. Moreover, cosilting subsets are closed.}\\

For the module category of an Artinian ring, a similar classification as above was established by Angeleri H\"ugel, Laking and Sentieri \cite[Theorem A]{HLS}. Their approach is based on 2-term complexes of injective objects and the Ziegler spectrum of a derived category, while ours relies on a connection between exact structures and purity established in \cite{self2}. We naturally consider a torsion-free class $\mathcal{F}\subseteq \mathcal{C}$ as well as its direct limit closure $\bar{\mathcal{F}} =\varinjlim \mathcal{F}$ as an exact category, since they are extension-closed subcategories. The collection of indecomposable injective objects in the exact category $\bar{\mathcal{F}}$ then forms the associated cosilting subset $\Inj\bar{\mathcal{F}} \subseteq \bar{\mathcal{C}}$. 

Let us return to Artin algebras $A$, where  the cosilting subsets of $\Ind A$ coincide with the support $\tau^-$-tilting subsets. This gives support $\tau^{-}$-tilting subsets a natural interpretation in terms of injective objects in an exact category, an interpretation that is not available for support $\tau$-tilting subsets. Thus, the $\tau^{-}$-perspective is more natural when studying modules of infinite length. We focus on \emph{generic} modules, that is, indecomposable $A$-modules $X$ of infinite length that have finite length over $\mathrm{End}_A(X)$. Generic modules form closed points $\{X\} \subseteq \Ind A$ and we investigate them in terms of $\tau^-$-rigidity. This property is related to \emph{bricks}, which are $A$-modules  whose endomorphism ring is a division algebra. We apply the developed theory of infinite $\tau$-tilting theory to show the following result.\\
\\
\textbf{Theorem D.} (\Cref{tame}, \Cref{KG}) \emph{Let $A$ be a finite dimensional algebra over an algebraically closed field. Consider the following statements.}
\begin{itemize}
    \item[(1)] \emph{There are infinitely many non-isomorphic finite dimensional bricks.}
    \item[(2)] \emph{There are infinitely many non-isomorphic finite dimensional bricks of the same dimension.}
    \item[(3)] \emph{There exists a generic brick.}
\end{itemize}
\emph{If $A$ is tame, then {(2)} and {(3)} are equivalent, and if the Krull-Gabriel dimension of $A$ is defined, then all of the statements are equivalent.}\\

The equivalences of the statements in the above theorem were conjectured by Mousavand and Paquette \cite[Conjecture 1.1]{MP}, and the equivalence of (2) and (3) has already been proven for tame algebras by Bautista, P\' erez and Salmer\'on \cite{BPS} using matrix reduction techniques. The Krull-Gabriel dimension, introduced by \mbox{Geigle \cite{Geigle}}, is a measure for the complexity of the module category. A conjecture of Prest 
states that the Krull-Gabriel dimension of $A$ is defined if and only if $A$ is domestic 
\cite[Conjecture 9.1.15]{Prest}, which is known for several classes of algebras,  see \cite[Section 1]{Pastu} for a detailed discussion. Lastly, we show the following surprising connection between $\tau^-$-rigidity and bricks over tame algebras.\\
\\
\textbf{Theorem E.} (\Cref{weird}) For a tame finite dimensional algebra $A$ over an algebraically closed field, a generic module $X$ is a brick if and only if $\{X\}$  is $\tau^-$-rigid.\\
\\
\textbf{Acknowledgments.} This work was funded by the Deutsche Forschungsgemeinschaft (DFG, German Research Foundation) under the Walter Benjamin Programme -- Project number 580964112.

\section{Purity and exact structures}  
We will later use the fact that torsion-free classes naturally form exact categories and consider big objects in order to classify them. A main tool to achieve this is the theory of purity for finitely accessible categories with products and its connection with exact structures established in \cite{self2}. 

An additive category $\mathcal{A}$ is called \emph{finitely accessible} if $\mathcal{A}$ has filtered colimits, the subcategory $\fp \mathcal{A}$ of finitely presented objects is essentially small, and every object in $\mathcal{A}$ is a filtered colimit of finitely presented objects. Recall that $X\in \mathcal{A}$ is \emph{finitely presented} if $\Hom_{\mathcal{A}}(X,-)$ commutes with filtered colimits. For an essentially small additive category $\mathcal{C}$ its ind-completion $\bar{\mathcal{C}}$ is a finitely accessible category. This assignment induces the following bijection due to Crawley-Boevey.

\begin{thm}\label{Ind} \cite[Theorem 1.4]{Crawley-Boevey2} There exists a one-to-one correspondence, up to equivalence, between finitely accessible categories $\mathcal{A}$ and essentially small additive categories $\mathcal{C}$ with split idempotents, given by $\mathcal{A} \mapsto \fp \mathcal{A}$ and $\mathcal{C} \mapsto \bar{\mathcal{C}}$.
\end{thm}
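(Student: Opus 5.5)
To prove \Cref{Ind}, I would show that the two assignments $\mathcal{A}\mapsto \fp\mathcal{A}$ and $\mathcal{C}\mapsto\bar{\mathcal{C}}$ are mutually inverse up to equivalence. First I need both to be well defined. For a finitely accessible $\mathcal{A}$, the category $\fp\mathcal{A}$ is essentially small by definition. It has split idempotents because $\mathcal{A}$ does: an idempotent $e$ on $X$ splits via the filtered colimit of $X\xrightarrow{e}X\xrightarrow{e}\cdots$. Moreover, a retract of a finitely presented object is finitely presented, since $\Hom(-,-)$ of a retract is a retract of a functor that commutes with filtered colimits.

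For the other direction I would realize $\bar{\mathcal{C}}$ concretely as the full subcategory of $\mathrm{Mod}\,\mathcal{C}=\mathrm{Add}(\mathcal{C}^{\op},\Ab)$ consisting of flat functors. Equivalently, these are the filtered colimits of representables $\Hom_{\mathcal{C}}(-,C)$. This category is closed under filtered colimits in $\mathrm{Mod}\,\mathcal{C}$, and every object is a filtered colimit of representables. Representables are finitely presented, by Yoneda and because filtered colimits in $\mathrm{Mod}\,\mathcal{C}$ are computed pointwise. Hence $\bar{\mathcal{C}}$ is finitely accessible.

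Next I would show $\fp\bar{\mathcal{C}}\simeq\mathcal{C}$ when $\mathcal{C}$ has split idempotents. Let $F$ be finitely presented and write $F=\varinjlim \Hom(-,C_i)$. Then $\mathrm{id}_F$ factors through some $\Hom(-,C_i)$, so $F$ is a retract of a representable. Retracts of representables correspond, via Yoneda's full faithfulness, to idempotents on $C_i$. Since these split in $\mathcal{C}$, the object $F$ is itself representable. So the Yoneda embedding gives an equivalence $\mathcal{C}\to\fp\bar{\mathcal{C}}$.

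Conversely, given $\mathcal{A}$ with $\mathcal{C}=\fp\mathcal{A}$, I would define $\Phi\colon\mathcal{A}\to\bar{\mathcal{C}}$ by $X\mapsto \Hom_{\mathcal{A}}(-,X)|_{\mathcal{C}}$. Writing $X=\varinjlim C_i$ and using that objects of $\mathcal{C}$ are finitely presented, $\Phi X=\varinjlim\Hom(-,C_i)$. Therefore $\Phi$ lands in flat functors and preserves filtered colimits. Full faithfulness follows from
\[\Hom(\varinjlim C_i,Y)=\varprojlim\Hom(C_i,Y)=\varprojlim\Hom(\Phi C_i,\Phi Y)=\Hom(\Phi X,\Phi Y),\]
where the middle step is Yoneda. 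For essential surjectivity, any flat $F=\varinjlim\Hom(-,C_i)$ is $\Phi(\varinjlim C_i)$, taking the colimit in $\mathcal{A}$.

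The main obstacle is showing that the flat functors on $\mathcal{C}$ are exactly the filtered colimits of representables. If instead $\bar{\mathcal{C}}$ is defined formally as the ind-completion, this amounts to checking that its colimit-indexing can be made filtered and that the two descriptions agree. I would handle it by taking the comma category $\mathcal{C}\downarrow F$ of elements. For flat $F$ (or for $\Phi X$), this category is filtered, because $F$ turns finite colimits into limits, and $F$ is its colimit. This standard Lazard/Gabriel–Ulmer type argument is where I expect the real work to lie.
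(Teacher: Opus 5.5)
The paper gives no proof of this statement; it is quoted from Crawley-Boevey. Your outline is essentially the standard argument from that source: realize $\bar{\mathcal{C}}$ inside $\mathrm{Mod}\,\mathcal{C}$, show that finitely presented objects of $\bar{\mathcal{C}}$ are retracts of representables, and prove that the restricted Yoneda functor $X\mapsto \Hom_{\mathcal{A}}(-,X)|_{\fp\mathcal{A}}$ is an equivalence. The steps you give for these (splitting idempotents via the colimit of $X\xrightarrow{e}X\xrightarrow{e}\cdots$, factoring $\mathrm{id}_F$ through a representable, full faithfulness via Yoneda, essential surjectivity) are correct.

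The one thing to fix is your justification of the key lemma in the last paragraph. An additive $\mathcal{C}$ need not have cokernels; for example, take $\mathcal{C}=\mathrm{proj}\,R$, where $\bar{\mathcal{C}}$ is the category of flat $R$-modules. So ``$F$ turns finite colimits into limits'' cannot supply the object needed to coequalize a parallel pair in the category of elements. Moreover, deriving filteredness from flatness in the sense of exactness of $F\otimes_{\mathcal{C}}-$ is genuinely Lazard's theorem, not a formality.

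You can avoid flatness altogether by taking $\bar{\mathcal{C}}$ to be the filtered colimits of representables, as the paper does. Then check closure under filtered colimits directly. A finite diagram in the category of elements of $\varinjlim F_j$ lifts to the category of elements of some $F_j$, because representables are finitely presented. A cocone there maps to a cocone for the original diagram.
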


Let $\mathcal{A}$ be a finitely accessible category. A filtered colimit of split exact sequences in $\mathcal{A}$ is a \emph{pure-exact sequence}. The pure-exact sequences form an exact structure and the injective objects are called \emph{pure-injective}. We are particularly interested in the case when $\mathcal{A}$ has products. Then, the isomorphism classes of pure-injective indecomposable objects form a topological space $\Ind \mathcal{A}$, the \emph{Ziegler spectrum} of $\mathcal{A}$. For more details, see for example \cite{Krause3}.

For an additive category $\mathcal{C}$ and a full additive subcategory $\mathcal{D} \subseteq \mathcal{C}$ a morphism $f\colon X \to C_X$ in $\mathcal{C}$ is a \emph{left $\mathcal{D}$-approximation} of $X$ if $C_X \in \mathcal{D}$ and $\Hom_{\mathcal{D}}(f,Y)$ is surjective for all $Y\in \mathcal{D}$. Moreover $\mathcal{D}$ is \emph{covariantly finite} if every $X\in \mathcal{C}$ admits a left $\mathcal{D}$-approximation.

\begin{prop}\label{covar}\cite[Proposition 4.13, Corollary 6.3]{Krause3} Let $\mathcal{A}$ be a finitely accessible category with products, $\mathcal{D} \subseteq  \fp \mathcal{A}$ a covariantly finite subcategory, and $\mathcal{B} = \varinjlim \mathcal{D}$ the closure of $\mathcal{D}$ in $\mathcal{A}$ under filtered colimits. Then $\mathcal{B}$ is a finitely accessible category with products such that $\fp \mathcal{B} = \mathcal{D}$ and $\Ind \mathcal{B} \subseteq \Ind \mathcal{A}$ is a closed subset.
\end{prop}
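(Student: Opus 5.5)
We have to prove \Cref{covar}: with $\mathcal{A}$ finitely accessible with products, $\mathcal{D}\subseteq \fp\mathcal{A}$ covariantly finite and $\mathcal{B}=\varinjlim\mathcal{D}$, the category $\mathcal{B}$ is finitely accessible with products, $\fp\mathcal{B}=\mathcal{D}$, and $\Ind\mathcal{B}$ is closed in $\Ind\mathcal{A}$. Throughout I assume $\mathcal{D}$ is closed under direct summands, as implicit in $\fp\mathcal{B}=\mathcal{D}$. I would work through the functor category: identify $\mathcal{A}$ with the flat functors $\fp\mathcal{A}^{\op}\to\Ab$, and $\mathcal{B}$ with objects $X$ such that $\Hom_{\mathcal{A}}(-,X)|_{\fp\mathcal{A}}$ is a filtered colimit of representables $\Hom(-,D)$ with $D\in\mathcal{D}$.

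\emph{Step 1: finite accessibility and $\fp\mathcal{B}=\mathcal{D}$.} By \Cref{Ind}, the ind-completion $\bar{\mathcal{D}}$ is finitely accessible with $\fp\bar{\mathcal{D}}=\mathcal{D}$. The inclusion $\mathcal{D}\subseteq\fp\mathcal{A}$ extends to a functor $\bar{\mathcal{D}}\to\mathcal{A}$ preserving filtered colimits. Since objects of $\mathcal{D}$ are finitely presented in $\mathcal{A}$, this functor is fully faithful: $\Hom$ out of a filtered colimit is a limit, and $\Hom$ into a filtered colimit from a finitely presented object is a colimit. Its essential image is closed under filtered colimits, so it equals $\mathcal{B}$. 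This gives $\mathcal{B}\simeq\bar{\mathcal{D}}$, hence $\fp\mathcal{B}=\mathcal{D}$.

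\emph{Step 2: products in $\mathcal{B}$.} Covariant finiteness gives a reflection. For $X\in\fp\mathcal{A}$ let $X\to D_X$ be a left $\mathcal{D}$-approximation. Then $\Hom(D_X,-)|_{\mathcal{D}}\to\Hom(X,-)|_{\mathcal{D}}$ is epi, so the functor $\Hom(X,-)|_{\mathcal{D}}$ is finitely generated, and it is finitely presented by iterating with a weak kernel argument. Equivalently, $\mathcal{D}$ has weak cokernels relative to $\fp\mathcal{A}$. I would then use the known criterion (Crawley-Boevey) that $\bar{\mathcal{D}}$ has products iff $\mathcal{D}$ has weak cokernels. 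A more direct route defines the product of $B_i\in\mathcal{B}$ as follows: take the product $P=\prod B_i$ in $\mathcal{A}$, and let the reflection of $P$ into $\mathcal{B}$ be the colimit over the comma category of maps $D\to P$, $D\in\mathcal{D}$. This category is filtered precisely because of the approximations. Checking its universal property reduces to maps out of $\mathcal{D}$, where it holds by construction.

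\emph{Step 3: closedness of $\Ind\mathcal{B}$.} First I need that pure-injective indecomposables of $\mathcal{B}$ are pure-injective in $\mathcal{A}$, and that $\mathcal{B}$ is closed under products in $\mathcal{A}$ and under pure subobjects. For closure under products: if $C\in\fp\mathcal{A}$ with approximation $C\to D_C$, every map $C\to\prod B_i$ factors through $D_C$. This factorization property for all $C\in\fp\mathcal{A}$ characterizes membership in $\varinjlim\mathcal{D}$, by the Lenzing-type criterion that $X\in\varinjlim\mathcal{D}$ iff every map from a finitely presented object to $X$ factors through $\mathcal{D}$. So the product in $\mathcal{A}$ already lies in $\mathcal{B}$, which also simplifies Step 2. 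Pure subobjects and pure quotients inherit the factorization property as well. Hence $\mathcal{B}$ is a definable subcategory of $\mathcal{A}$: it is closed under products, filtered colimits and pure subobjects. By the Ziegler/Crawley-Boevey correspondence between definable subcategories and closed subsets, $\Ind\mathcal{B}=\mathcal{B}\cap\Ind\mathcal{A}$ is closed. I also need that pure-exactness in $\mathcal{B}$ agrees with pure-exactness in $\mathcal{A}$, which holds because both are detected by $\Hom(C,-)$ with $C\in\fp$ and, in $\mathcal{B}$, maps from $C$ factor through $\mathcal{D}$.

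The main obstacle is this factorization criterion for $\varinjlim\mathcal{D}$ (Lenzing's lemma in this generality) together with the identification of purity in $\mathcal{B}$ with purity in $\mathcal{A}$. I would prove the criterion by showing that the comma category $(\mathcal{D}\downarrow X)$ is filtered and cofinal in $(\fp\mathcal{A}\downarrow X)$ whenever the factorization property holds. Filteredness uses covariant finiteness to equalize pairs of parallel maps via an approximation of a cokernel in $\fp\mathcal{A}$.
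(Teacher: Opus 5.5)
The paper gives no proof of this statement; it is quoted from Krause, so there is nothing to compare against line by line. Your outline is the standard argument behind such results, and most of it is right.

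\begin{itemize}
\item Step 1 is correct: the extension $\bar{\mathcal{D}}\to\mathcal{A}$ is fully faithful, and its essential image is $\mathcal{B}$.
\item The use of left $\mathcal{D}$-approximations in Step 3 is correct. It shows that a product in $\mathcal{A}$ of objects of $\mathcal{B}$ satisfies the factorization criterion, so it lies in $\mathcal{B}$ and is already the product there. This makes Step 2 superfluous.
\item Closure under pure subobjects, via the lifting property of pure monomorphisms, is also fine.
\end{itemize}
So $\mathcal{B}$ is definable and $\mathcal{B}\cap\Ind\mathcal{A}$ is closed. Assuming $\mathcal{D}$ is closed under summands is harmless here, since the paper applies the statement to torsion-free classes.

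The step your text does not establish is the equality $\Ind\mathcal{B}=\mathcal{B}\cap\Ind\mathcal{A}$, specifically the inclusion $\subseteq$. You rightly say you need pure-injectives of $\mathcal{B}$ to be pure-injective in $\mathcal{A}$. But agreement of pure-exact sequences with all terms in $\mathcal{B}$ only gives the other direction. A pure monomorphism $N\to M$ in $\mathcal{A}$ with $N\in\mathcal{B}$ need not have $M\in\mathcal{B}$, so pure-injectivity of $N$ relative to $\mathcal{B}$ says nothing about whether it splits.

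You need one more input from the theory of definable subcategories. For example: every object $N$ of the definable subcategory $\mathcal{B}$ admits a pure monomorphism in $\mathcal{A}$ into a pure-injective object of $\mathcal{A}$ that lies in $\mathcal{B}$. A product of points of $\mathcal{B}\cap\Ind\mathcal{A}$ works, as does the pure-injective hull. That monomorphism is then pure in $\mathcal{B}$. It splits if $N$ is pure-injective in $\mathcal{B}$, which makes $N$ a summand of a pure-injective of $\mathcal{A}$.

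Relatedly, the paper uses this statement in \Cref{clsub} to conclude that $\Inj\bar{\mathcal{F}}$ is closed in $\Ind\bar{\mathcal{C}}$. For that you also need the Ziegler topology on $\Ind\mathcal{B}$ to be the subspace topology. This follows once purity agrees: the definable subcategories of $\mathcal{B}$ are then exactly the definable subcategories of $\mathcal{A}$ contained in $\mathcal{B}$.

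There are also three smaller slips.
\begin{itemize}
\item In your proof of the criterion for membership in $\varinjlim\mathcal{D}$, $\fp\mathcal{A}$ need not have cokernels, and covariant finiteness is not needed. Given parallel maps $u,v\colon D_1\to D_2$ over $X$, filteredness of $(\fp\mathcal{A}\downarrow X)$ gives $c\colon D_2\to C$ over $X$ with $cu=cv$. Then factor $C\to X$ through an object of $\mathcal{D}$.
\item The comma category $(\mathcal{D}\downarrow P)$ in your direct route of Step 2 is not filtered for a general $P$. For $P=\prod B_i$ it is, but only because $P\in\mathcal{B}$, which is exactly what Step 3 proves.
\item Weak cokernels in $\mathcal{D}$ come from weak cokernels in $\fp\mathcal{A}$, composed with approximations. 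These exist because $\mathcal{A}$ has products. This should be said; "weak kernel" is the wrong notion there.
\end{itemize}
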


Next, we discuss the connection with exact structures. For the definition of exact structures and basic properties we refer to the exposition of B\"uhler \cite{Buehler}. We use the terminology of \emph{admissible short exact sequences} and  \emph{admissible monomorphisms}.

Following Positselski \cite{Positselski}, an exact structure on a finitely accessible category $\mathcal{A}$ is \emph{locally coherent} if every admissible short exact sequence is a filtered colimit of admissible short exact sequences $    0 \to X \to Y \to Z \to 0$ with $X,Y,Z \in \fp \mathcal{A}$. For example, the pure-exact sequences form a locally coherent exact structure. The following connects locally coherent exact structures with the Ziegler spectrum.

\begin{thm}\label{connect}\cite[Proposition 2.12, Theorem 3.1]{self2} Let $\mathcal{A}$ be a finitely accessible category with products and consider a locally coherent exact structure on $\mathcal{A}$. Every $X\in \mathcal{A}$ admits an admissible monomorphism $X\to Q$ where $Q$ is a product of indecomposable injectives. Moreover, the isomorphism classes of indecomposable injectives form a closed set in $\Ind \mathcal{A}$.
\end{thm}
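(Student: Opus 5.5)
We need to prove two claims from \Cref{connect}: every $X\in\mathcal{A}$ admits an admissible monomorphism into a product of indecomposable injectives, and the indecomposable injectives form a closed subset of $\Ind\mathcal{A}$. The plan is to encode the locally coherent exact structure by its finitely presented admissible sequences and transport both statements to purity and the Ziegler spectrum.

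\textbf{Step 1: injectives as a closed set.} Let $\mathcal{E}$ be the collection of admissible short exact sequences $0\to X\to Y\to Z\to 0$ with $X,Y,Z\in\fp\mathcal{A}$.

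First I show that an object $Q$ is injective for the exact structure if and only if $\Hom_{\mathcal{A}}(-,Q)$ is exact on every sequence in $\mathcal{E}$.
\begin{itemize}
\item Necessity is clear.
\item For sufficiency, write an arbitrary admissible sequence as a filtered colimit of sequences in $\mathcal{E}$, using local coherence.
\item Exactness of $\Hom(-,Q)$ on a filtered colimit is not automatic. I first show that injectives for this structure are pure-injective: split pure-exact sequences are admissible, since filtered colimits of split sequences are admissible by local coherence. Then I use the lim-exactness that pure-injectivity provides, which is the standard Jensen--Lenzing style argument.
\end{itemize}

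Next, for each $e\in\mathcal{E}$ with admissible monomorphism $f\colon X\to Y$, the condition that every map $X\to Q$ factors through $f$ is a condition on the finitely presented functor $\mathrm{coker}(\Hom(Y,-)\to\Hom(X,-))$. Namely, it requires this functor to vanish at $Q$. For pure-injective indecomposable $Q$, the set where such a functor vanishes is Ziegler closed, being the complement of a basic open set. Intersecting over all $e\in\mathcal{E}$ shows that the indecomposable injectives form a closed set.

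Some care is needed at one point: to show that an indecomposable pure-injective satisfying the conditions is injective, we need the injectivity test to reduce to pure-injectives. That is exactly the sufficiency statement above.

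\textbf{Step 2: enough injectives.} Let $U$ be the set of indecomposable injectives. I embed $X$ into $Q=\prod_{u}\prod_{\Hom(X,u)}u$ via the canonical map $\phi$. Then I must show that $\phi$ is an admissible monomorphism. By local coherence, and since $X$ is a filtered colimit of finitely presented objects, it suffices to control finitely presented data.

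The key claim is that for each finitely presented $A\to X$, the relevant admissible sequences in $\mathcal{E}$ are detected by $U$. More precisely, if a map does not factor through an admissible monomorphism, then some indecomposable injective witnesses this. This is the analogue of the fact that indecomposable pure-injectives detect pure-exactness. I would prove it by considering the closed subset of $\Ind\mathcal{A}$ cut out in Step 1. By Step 1 this set equals $U$, and every closed set corresponds to a Serre subcategory of finitely presented functors, with vanishing on $U$ meaning membership in that Serre subcategory.

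Thus maps with "admissible cokernel behaviour" are exactly those killed by the Serre subcategory generated by the functors from $\mathcal{E}$. With this, $\phi$ becomes an admissible monomorphism, either as a pure monomorphism followed by an admissible one, or by checking the defining lifting property.

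\textbf{Main obstacle.} The hard step is the detection statement in Step 2. It requires identifying the Serre subcategory of $\fp$-functors attached to the exact structure and using Ziegler--Herzog--Krause duality between closed sets and Serre subcategories. I expect the admissibility of $\phi$ to follow from the obscure axiom and closure of admissible monomorphisms under filtered colimits and products. Closure under products is the delicate point, and I would try to derive it from local coherence together with the existence of products.
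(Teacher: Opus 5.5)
\textbf{The gap is in Step 2: nothing there ever produces an admissible monomorphism out of $X$.}

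By construction, every map $X\to u$ with $u\in U$ is a component of $\phi$. So ``detection by $U$'' has nothing left to say about $\phi$. What you would need is the converse: if $\Hom(f,u)$ is surjective for all $u\in U$, then $f$ is an inflation.

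The Ziegler--Herzog--Krause correspondence does not give this. It only tells you that a finitely presented functor vanishing on $U$ lies in the Serre subcategory generated by the defects $F_e$. That is a statement about functors, not about admissibility of morphisms. Moreover, the converse is essentially the theorem itself: applied to $A\to 0$, it already says $\Hom(A,U)\neq 0$ whenever $A\neq 0$.

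The other tools you mention do not close this either. There is no intrinsic lifting property characterizing inflations. The obscure axiom only helps once you already have an inflation $X\to I$ into an injective that factors through $\phi$. Closure under products is neither provided by local coherence, which concerns filtered colimits only, nor needed.

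\textbf{The missing idea} is a construction that moves $X$ by an inflation into the definable subcategory $\mathcal{X}=\{M\mid \vec F_e(M)=0 \text{ for all } e\in\mathcal{E}\}$. A small object argument does this:
\begin{itemize}
\item push out along the coproduct of all finitely presented inflations $A\to B$, indexed by maps $A\to X$;
\item iterate countably often;
\item use that inflations are stable under coproducts, pushouts, composition and filtered colimits.
\end{itemize}
This yields an inflation $X\to M$ with $M\in\mathcal{X}$, since each $A$ is finitely presented and so every map $A\to M$ factors through a finite stage.

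Only then does your detection idea apply, and it applies to $M$ rather than $X$. Every object of $\mathcal{X}$ is a pure subobject of a product of indecomposable pure-injectives lying in $\mathcal{X}$. By Step 1, these are exactly the indecomposable injectives. Composing $X\to M$ with this pure, hence admissible, monomorphism gives the required inflation.

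\textbf{Step 1 is essentially sound, but its first claim must be restricted to pure-injective $Q$.} As stated it is false, and your sufficiency argument is circular. For the pure-exact structure, $\mathcal{E}$ consists of split sequences, so every object passes your test, whereas the injectives are precisely the pure-injectives. Knowing that injectives are pure-injective tells you nothing about an arbitrary $Q$ that passes the test.

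The correct statement is that a \emph{pure-injective} $Q$ with $\Hom(-,Q)$ exact on $\mathcal{E}$ is injective. To prove it, write an admissible sequence as $\varinjlim(X_i\to Y_i\to Z_i)$ with each $X_i\to Y_i\to Z_i$ in $\mathcal{E}$, and use $\varprojlim^1\Hom(Z_i,Q)=0$. Combined with ``injective implies pure-injective'', this identifies the indecomposable injectives with $\{N\in\Ind\mathcal{A}\mid \vec F_e(N)=0 \text{ for all } e\in\mathcal{E}\}$. That set is closed, and this is also the identification Step 2 needs.

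For comparison: the paper gives no proof of this statement; it quotes it from the author's earlier work on exact structures and purity.
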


\section{Torsion pairs via cosilting subsets}

Let $\mathcal{C}$ be an essentially small abelian category. A \emph{torsion pair} $(\mathcal{T}, \mathcal{F})$ of $\mathcal{C}$ is a pair of full subcategories such that $\Hom_{\mathcal{C}}(\mathcal{T}, \mathcal{F}) = 0$ and every $X\in \mathcal{C}$ admits a short exact sequence $0 \to X' \to X \to X'' \to 0$ with $X' \in \mathcal{T}$ and $X'' \in \mathcal{F}$. Moreover $\mathcal{T}$ is a \emph{torsion class} and $\mathcal{F}$ a \emph{torsion-free class}. Recall that a torsion pair is uniquely determined by the torsion-free class. For our classification, we restrict to torsion-free classes and use the following characterization by Gentle and Todorov.

\begin{prop}\label{Char}\cite[Proposition 3.9']{GT} Let $\mathcal{C}$ be an abelian category and $\mathcal{F}\subseteq \mathcal{C}$ a full subcategory. Then $\mathcal{F}$ is a torsion-free class if and only if $\mathcal{F}$ is covariantly finite and closed under extensions as well as subobjects.
\end{prop}
%\begin{proof} It is well-known that a torsion-free class is closed under extensions as well as subobjects and the canonical short exact sequences of the corresponding torsion pair provide left $\mathcal{F}$-approximations. 

%Now suppose that $\mathcal{F}$ is covariantly finite and closed under extensions as well as subobjects. Set $\mathcal{T} = \{X \in \mathcal{C}\mid \Hom_{\mathcal{C}}(X, \mathcal{F}) = 0\}$. Then $\Hom_{\mathcal{C}}(\mathcal{T}, \mathcal{F}) = 0$ and it is left to show the existence of the canonical short exact sequences. For $X\in \mathcal{C}$ let $X' \leq X$ be the intersection of all kernels of morphisms $X \to Y$ with $Y \in \mathcal{F}$. This is well-defined, since $X'$ equals the kernel of a left $\mathcal{F}$-approximation $f\colon X\to C_X$. Consider the short exact sequence $0 \to X' \to X \to X'' \to 0$ with $X'' = \im f \in \mathcal{F}$. We must verify that $X' \in \mathcal{T}$. Let $X' \to Y$ be a morphism with $Y \in \mathcal{F}$. Forming a pushout $P$ yields a commutative diagram
%\begin{equation*}
%    \begin{tikzcd}
%        0 \arrow[r] & X' \arrow[r] \arrow[d] & %X \arrow[r] \arrow[d] & X'' \arrow[r] \arrow[d, equals] & 0 \\ 
%        0 \arrow[r] & Y \arrow[r]  & P \arrow[r] & X'' \arrow[r] & 0 
%    \end{tikzcd}
%\end{equation*}
%with exact rows. Since $\mathcal{F}$ is closed under extensions, it follows that $P \in \mathcal{F}$ and by definition of $X'$ the composition $X' \to X \to P$ is zero. Hence, also $X' \to Y$ is a zero morphism and $X' \in \mathcal{T}$.
%\end{proof}

From now on we fix an essentially small abelian category $\mathcal{C}$. By \Cref{Ind} the ind-completion $\bar{\mathcal{C}}$ is finitely accessible and we may identify $\fp \bar{\mathcal{C}} = \mathcal{C}$. In fact $\bar{\mathcal{C}}$ is a Grothendieck category \cite[Satz 1.5]{Breitsprecher} so it has products. In particular, the Ziegler spectrum $\Ind \bar{\mathcal{C}}$ is defined.

For a torsion-free class ${\mathcal{F}} \subseteq {\mathcal{C}}$ let $\bar{\mathcal{F}} = \varinjlim \mathcal{F}$ be the closure of $\mathcal{F}$ in $\bar{\mathcal{C}}$ under filtered colimits. Since $\mathcal{F}$ is covariantly finite in $\mathcal{C}$ it follows from \Cref{covar} that $\bar{\mathcal{F}}$ is a finitely accessible category with products and $\fp \bar{\mathcal{F}} = \mathcal{F}$. Moreover, the Ziegler spectrum $\Ind \bar{\mathcal{F}}$ is a closed subset of $\Ind \bar{\mathcal{C}}$. Note that again $\bar{\mathcal{F}}\subseteq \bar{\mathcal{C}}$ is a torsion-free class \cite[Lemma 4.4]{Crawley-Boevey2} and in particular $\bar{\mathcal{F}}$ is closed under extensions. Thus, there is a canonical exact structure on $\bar{\mathcal{F}}$ given by all short exact sequences $0\to X\to Y \to Z\to 0$ in $\bar{\mathcal{C}}$ with $X,Y,Z\in\bar{ \mathcal{F}}$, see for example \cite[Lemma 10.20]{Buehler}. In this sense, we regard $\bar{\mathcal{F}}$ as an exact category. 

\begin{lem} Let $\mathcal{F}$ be a torsion-free class in $ \mathcal{C}$. Then $\bar{\mathcal{F}}$ is a locally coherent exact category.
\end{lem}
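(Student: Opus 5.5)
Fix a short exact sequence $0\to X\xrightarrow{f} Y\xrightarrow{g} Z\to 0$ in $\bar{\mathcal{C}}$ with $X,Y,Z\in\bar{\mathcal{F}}$. We must write it as a filtered colimit of short exact sequences whose three terms lie in $\mathcal{F}=\fp\bar{\mathcal{F}}$. We already know $\bar{\mathcal{F}}$ is finitely accessible with $\fp\bar{\mathcal{F}}=\mathcal{F}$ (\Cref{covar}) and closed under extensions and subobjects in $\bar{\mathcal{C}}$, being a torsion-free class there. The plan is to first present the sequence as a filtered colimit of short exact sequences in $\mathcal{C}$, and then replace each term by an object of $\mathcal{F}$ using subobject closure.

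\textbf{Step 1: a colimit of exact sequences in $\mathcal{C}$.} Since $\bar{\mathcal{C}}$ is locally coherent, with $\fp\bar{\mathcal{C}}=\mathcal{C}$ abelian, every short exact sequence in $\bar{\mathcal{C}}$ is a filtered colimit of short exact sequences in $\mathcal{C}$. Concretely, write $Y=\varinjlim Y_i$ with $Y_i\in\mathcal{C}$ and $\phi_i\colon Y_i\to Y$ the canonical maps. Set
\[
Z_i=\im(Y_i\to Z),
\]
so that $Z=\varinjlim Z_i$ because filtered colimits are exact in the Grothendieck category $\bar{\mathcal{C}}$. Each $Z_i$ is a subobject of $Z\in\bar{\mathcal{F}}$, so $Z_i\in\bar{\mathcal{F}}$. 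Since $Z_i$ is also a quotient of $Y_i\in\mathcal{C}$, it is finitely generated in $\bar{\mathcal{C}}$. This alone does not make $Z_i$ finitely presented.

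\textbf{Step 2: making the terms lie in $\mathcal{F}$.} This is the main obstacle. I would first replace $Y_i$ by $\im\phi_i\subseteq Y$. This image lies in $\bar{\mathcal{F}}$ by subobject closure, and it is finitely generated. The key claim is that a finitely generated subobject $W$ of an object of $\bar{\mathcal{F}}$ already lies in $\mathcal{F}$. To prove it, write $W$ as a quotient $Y_i\to W$ and factor through the torsion-free quotient $Y_i/tY_i=\im(Y_i\to C_{Y_i})$. Here $C_{Y_i}$ is the left $\mathcal{F}$-approximation, which is also a left $\bar{\mathcal{F}}$-approximation because $Y_i$ is finitely presented. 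The quotient $Y_i/tY_i$ lies in $\mathcal{F}$, and $Y_i\to W$ factors through it. Now write $W=\varinjlim W_j$ with $W_j\in\mathcal{F}$. Since $Y_i/tY_i$ is finitely presented, the map $Y_i/tY_i\to W$ factors through some $W_j\to W$. Using finite generation, one then shows $W$ is a quotient of $Y_i/tY_i$ by a subobject that is itself finitely generated. Hence $W$ is finitely presented in $\bar{\mathcal{C}}$, so $W\in\mathcal{C}\cap\bar{\mathcal{F}}=\mathcal{F}$; the last equality holds since the finitely presented objects of $\bar{\mathcal{F}}$ are those of $\mathcal{F}$. If this direct argument stalls, I would fall back on the fact that $\mathcal{C}$ is abelian with $\bar{\mathcal{C}}$ locally coherent, so finitely generated subobjects of finitely presented objects are finitely presented.

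\textbf{Step 3: assembling the filtered system.} With $Y_i'=\im\phi_i\in\mathcal{F}$, set
\[
Z_i'=g(Y_i')\in\mathcal{F},\qquad X_i'=X\cap Y_i'=\ker(Y_i'\to Z_i').
\]
Here $Z_i'\in\mathcal{F}$ by the claim of Step 2. Then $X_i'$ is a subobject of $Y_i'\in\mathcal{C}$, and the kernel of a map in the abelian category $\mathcal{C}$ lies in $\mathcal{C}$; subobject closure then gives $X_i'\in\mathcal{F}$. This produces short exact sequences $0\to X_i'\to Y_i'\to Z_i'\to 0$ with all terms in $\mathcal{F}$, forming a filtered system indexed by the images. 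Exactness of filtered colimits in $\bar{\mathcal{C}}$, together with $\varinjlim Y_i'=Y$, shows that their colimit is the original sequence. This proves that the exact structure on $\bar{\mathcal{F}}$ is locally coherent.
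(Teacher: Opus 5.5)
There is a genuine gap in Step 2. The claim that a finitely generated subobject of an object of $\bar{\mathcal{F}}$ lies in $\mathcal{F}$ is false for a general essentially small abelian $\mathcal{C}$.

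For a counterexample, let $\mathcal{C}$ be the category of finitely presented modules over a coherent, non-Noetherian ring such as $R=k[x_1,x_2,\dots]$. Then $\mathcal{C}$ is abelian and $\bar{\mathcal{C}}\simeq\Mod R$. Take the torsion-free class $\mathcal{F}=\mathcal{C}$, with torsion class $0$, so that $\bar{\mathcal{F}}=\Mod R$. The module $W=R/(x_1,x_2,\dots)\cong k$ is finitely generated and lies in $\bar{\mathcal{F}}$, but it is not finitely presented, so $W\notin\mathcal{F}$.

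Your step ``using finite generation, $W$ is a quotient of $Y_i/tY_i$ by a finitely generated subobject'' is exactly what fails: here the kernel of $R\to k$ is not finitely generated. Your fallback does not apply either, because $\im\phi_i$ and $g(Y_i')$ are subobjects of $Y$ and $Z$, which need not be finitely presented.

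In fact the whole strategy of using subobjects of $Y$ cannot work. For the sequence $0\to 0\to k\to k\to 0$, every finitely generated subobject of $Y=k$ is $0$ or $k$. So the images $\im\phi_i$ are eventually $k\notin\mathcal{F}$, and no directed system of subobjects of $Y$ lying in $\mathcal{F}$ has union $Y$. Your argument is correct when $\mathcal{C}$ is Noetherian (e.g.\ $\mod A$ for an Artin algebra), since then finitely generated objects of $\bar{\mathcal{C}}$ are finitely presented. The lemma, however, is stated without that hypothesis.

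The missing idea is to allow non-injective transition maps, and to take kernels and images only of morphisms between finitely presented objects. The paper proceeds as follows.
\begin{itemize}
\item[(1)] Write $Z=\varinjlim Z_i$ with $Z_i\in\mathcal{F}$.
\item[(2)] Pull back along $Z_i\to Z$ to get $0\to X\to P_i\to Z_i\to 0$, with $P_i\in\bar{\mathcal{F}}$ by extension closure.
\item[(3)] Write $P_i=\varinjlim_j P_{ij}$ with $P_{ij}\in\mathcal{F}$.
\item[(4)] Take the kernel $K_{ij}$ and the image $I_{ij}$ of the composite $P_{ij}\to P_i\to Z_i$. This is a morphism in $\mathcal{C}$, so $K_{ij},I_{ij}\in\mathcal{C}$. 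They lie in $\mathcal{F}$ as subobjects of $P_{ij}$ and $Z_i$.
\item[(5)] Exactness of filtered colimits in $\bar{\mathcal{C}}$ recovers $0\to X\to P_i\to Z_i\to 0$ as the colimit over $j$, and the original sequence as the colimit over $i$.
\end{itemize}

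Alternatively, you could use that $g\colon Y\to Z$ is a filtered colimit of morphisms $g_\alpha\colon Y_\alpha\to Z_\alpha$ with $Y_\alpha,Z_\alpha\in\mathcal{F}=\fp\bar{\mathcal{F}}$. The sequences $0\to\ker g_\alpha\to Y_\alpha\to\im g_\alpha\to 0$ then have all terms in $\mathcal{F}$, and their colimit is the original sequence.
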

\begin{proof} Let $0 \to X \to Y \to Z \to 0$ be a short exact sequence in $\bar{\mathcal{C}}$ with $X,Y,Z \in \bar{\mathcal{F}}$. Since $\fp \bar{\mathcal{F}} = \mathcal{F}$ there exists a  filtered colimit $Z = \varinjlim Z_i$ with $Z_i \in \mathcal{F}$. Consider the commutative diagram of short exact sequences
\begin{equation*}
\begin{tikzcd}
    0 \arrow[r] & X \arrow[r] & Y \arrow[r] & Z \arrow[r] & 0 \\
    0 \arrow[r] & X \arrow[u, equals] \arrow[r] & P_i \arrow[u] \arrow[r] & Z_i \arrow[r]\arrow[u] & 0
\end{tikzcd}
\end{equation*}
where the right square is a pullback diagram. Then $P_i \in \bar{\mathcal{F}}$ and there are filtered colimits $P_i = \varinjlim P_{ij}$ with $P_{ij} \in \mathcal{F}$. Let $K_{ij} \in \mathcal{F}$ be the kernel and $I_{ij} \in \mathcal{F}$ the image of the composition $P_{ij} \to P_i \to  Z_i$. Again, there are commutative diagrams of short exact sequences
\begin{equation*}
\begin{tikzcd}
    0 \arrow[r] & X \arrow[r] & P_{i} \arrow[r] & Z_i \arrow[r] & 0 \\
    0 \arrow[r] & K_{ij} \arrow[u] \arrow[r] & P_{ij} \arrow[u] \arrow[r] & I_{ij} \arrow[r]\arrow[u] & 0.
\end{tikzcd}
\end{equation*}
Since filtered colimits are exact inside the Grothendieck category $\bar{\mathcal{C}}$ it follows that the filtered colimit over $j$ of the short exact sequences in the second row recovers the short exact sequence in the first row. Taking the filtered colimit over $i$ we then recover the original short exact sequence. Note that $0 \to K_{ij} \to P_{ij} \to I_{ij} \to 0$ is an admissible short exact sequence inside the exact category $\bar{\mathcal{F}}$ that only involves finitely presented objects. Thus $\bar{\mathcal{F}}$ is locally coherent.
\end{proof}

For a torsion-free class $\mathcal{F}\subseteq \mathcal{C}$ let $\Inj \bar{\mathcal{F}}$ denote the set of isomorphism classes of indecomposable injective objects inside the locally coherent exact category $\bar{\mathcal{F}}$. By \Cref{connect} it follows that $\Inj \bar{\mathcal{F}}$ is a closed subset of $\Ind \bar{\mathcal{F}}$ and we already observed that $\Ind \bar{\mathcal{F}}$ is a closed subset of $\Ind \bar{\mathcal{C}}$. In combination, we obtain the following result, which may be seen as a generalization of \cite[Corollary 3.3]{HLS}.

\begin{coro}\label{clsub} Let $\mathcal{F}$ be a torsion-free class in $\mathcal{C}$. Then $\Inj \bar{\mathcal{F}}$ is a closed subset of $\Ind \bar{\mathcal{C}}$.
\end{coro}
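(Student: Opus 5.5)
The plan is to combine the two closedness statements already available, taking care that the topologies match. First, by the preceding lemma, $\bar{\mathcal{F}}$ with its canonical exact structure is a locally coherent exact category. By \Cref{covar} it is also a finitely accessible category with products. So \Cref{connect} applies to $\bar{\mathcal{F}}$: the isomorphism classes of indecomposable injectives, namely $\Inj \bar{\mathcal{F}}$, form a closed subset of $\Ind \bar{\mathcal{F}}$.

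Second, \Cref{covar} applied to $\mathcal{D} = \mathcal{F} \subseteq \mathcal{C} = \fp \bar{\mathcal{C}}$, which is covariantly finite by \Cref{Char}, gives that $\Ind \bar{\mathcal{F}}$ is a closed subset of $\Ind \bar{\mathcal{C}}$. This statement implicitly includes that the Ziegler topology on $\Ind \bar{\mathcal{F}}$ is the subspace topology inherited from $\Ind \bar{\mathcal{C}}$.

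The only point needing care is exactly this identification. We need that indecomposable pure-injectives of $\bar{\mathcal{F}}$ are indecomposable pure-injectives of $\bar{\mathcal{C}}$, and that the topology on $\Ind\bar{\mathcal{F}}$ is the restriction of that on $\Ind\bar{\mathcal{C}}$. I would take both from \cite[Proposition 4.13, Corollary 6.3]{Krause3} as packaged in \Cref{covar}; there $\Ind \mathcal{B}$ is viewed as a subspace of $\Ind \mathcal{A}$. If one wanted to verify this directly, one would use that pure-exact sequences in $\bar{\mathcal{F}}$ are pure-exact in $\bar{\mathcal{C}}$. One would also use that the left $\mathcal{F}$-approximations make the inclusion $\bar{\mathcal{F}}\to\bar{\mathcal{C}}$ preserve products and filtered colimits, so that it is compatible with the definable-subcategory description of closed sets.

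Granting this, the conclusion is immediate, since a closed subset of a closed subspace is closed in the ambient space: $\Inj \bar{\mathcal{F}} = \Ind\bar{\mathcal{F}} \setminus W$ for some open $W$ of $\Ind\bar{\mathcal{F}}$. Writing $W = W' \cap \Ind\bar{\mathcal{F}}$ with $W'$ open in $\Ind\bar{\mathcal{C}}$, we get $\Inj\bar{\mathcal{F}} = \Ind\bar{\mathcal{F}} \cap (\Ind\bar{\mathcal{C}}\setminus W')$. This is an intersection of two closed subsets of $\Ind\bar{\mathcal{C}}$, hence closed.
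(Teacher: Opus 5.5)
Your proposal is correct and follows the paper's argument: the preceding lemma makes $\bar{\mathcal{F}}$ locally coherent, so \Cref{connect} gives $\Inj \bar{\mathcal{F}}$ closed in $\Ind \bar{\mathcal{F}}$; \Cref{covar} gives $\Ind \bar{\mathcal{F}}$ closed in $\Ind \bar{\mathcal{C}}$; and a closed subset of a closed subspace is closed in the ambient space. Your extra remarks on the subspace topology only spell out what the paper implicitly takes from \Cref{covar}.
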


The goal will be to classify torsion-free classes $\mathcal{F} \subseteq \mathcal{C}$ via the associated closed subset $\Inj \bar{\mathcal{F}} \subseteq \Ind \bar{\mathcal{C}}$. This requires the following definition. (Note that for the mo- dule category of an Artinian ring, cogen-rigid pairs coincide with rigid pairs in \cite{HLS}.)  

\begin{df}\label{defimp}\rm For a set $U$ of objects in $\bar{\mathcal{C}}$ we denote by $\cogen U \subseteq \mathcal{C}$ the full subcategory of all $X\in \mathcal{C}$ that admit a monomorphism $X \to Y$ in $\bar{\mathcal{C}}$ where $Y$ is a product of objects in $U$.
\begin{itemize}
    \item[(1)] 
A subset $U \subseteq \Ind \bar{\mathcal{C}}$ is \emph{cogen-rigid} if
$\mathrm{Ext}_{\bar{\mathcal{C}}}^1 (\cogen U, U) = 0$, every $X\in U$ is a filtered colimit of objects in $\cogen U$, and $\cogen U$ is covariantly finite.
    \item[(2)] We call $(U,V)$ a \emph{cogen-rigid} pair if $U \subseteq \Ind \bar{\mathcal{C}}$ is cogen-rigid and $V\subseteq \Inj \bar{\mathcal{C}}$ fulfills $\Hom_{\bar{\mathcal{C}}}(X, V) = 0$ for all products $X$ of objects in $U$. There is a partial\linebreak order on cogen-rigid pairs given by $(U,V)\leq (U',V')$ if $U \subseteq U'$ and $V \subseteq V'$.
    \item[(3)] A subset $U\subseteq \Ind \bar{\mathcal{C}}$ is \emph{cosilting} if there is a maximal cogen-rigid pair $(U,V)$. 
\end{itemize}
\end{df}

\begin{rem}\label{length}\rm Some of the properties for a subset $U \subseteq \Ind \bar{\mathcal{C}}$ to be cogen-rigid hold automatically under suitable finiteness conditions on $\mathcal{C}$.
\begin{itemize}
    \item[(1)] Suppose that $\mathcal{C}$ is Noetherian. That is, the lattice of subobjects of any object in $\mathcal{C}$ fulfills the ascending chain condition. Then every $X \in U$ is a filtered colimit of objects in $\cogen U$. Namely $X = \varinjlim Y$ where the colimit goes over all finitely generated subobjects $Y \subseteq X$. Being Noetherian implies that $Y$ is finitely presented \cite[Proposition 11.2.5]{Krause4} and so each $Y$ is contained in $\cogen U$.
      \item[(2)] Suppose that $\mathcal{C}$ is Artinian. That is, the lattice of subobjects of any object in $\mathcal{C}$ fulfills the descending chain condition. Then $\cogen U$ is covariantly finite. For $X\in \mathcal{C}$ a left $\cogen U$-approximation $X\to C_X$ is given by the image of the canonical map $f\colon X \to \prod Y$ where the product goes over all $g\colon X \to Y$ with $Y \in \cogen U$. Being Artinian implies that $\ker f = \bigcap \ker g$ for a finite intersection and so $\im f \in \cogen U$.
      \item[(3)] If $\mathcal{C}$ is a length category, equivalently $\mathcal{C}$ is Noetherian and Artinian, then by the previous observations a subset $U \subseteq \Ind \bar{\mathcal{C}}$ is cogen-rigid if and only if $\mathrm{Ext}_{\bar{\mathcal{C}}}^1 (\cogen U, U) = 0$. Moreover, in this case $(U,V)$ is a cogen-rigid pair if and only if $\Hom_{\bar{\mathcal{C}}}(U,V) = 0$ with $V\subseteq \Inj \bar{\mathcal{C}}$. This follows from the fact that any product $\prod X$ in $\bar{\mathcal{C}}$ equals the union of its finite length subobjects, which are isomorphic to subobjects of finite direct sums $\bigoplus X\leq \prod X$.
\end{itemize}
\end{rem}

The definition of cogen-rigid subsets $U \subseteq \Ind \bar{\mathcal{C}}$ is essentially made such that they give rise to torsion-free classes, see the following proposition.

\begin{prop}\label{keylem} Let $U \subseteq \Ind \bar{\mathcal{C}}$ be cogen-rigid and $\mathcal{F} \subseteq \mathcal{C}$ a torsion-free class. 
\begin{itemize}
    \item[(1)] The subcategory $\cogen U \subseteq \mathcal{C}$ is a torsion-free class and $U \subseteq \Inj \overline{\cogen U}$.
    \item[(2)] The subset $\Inj \bar{\mathcal{F}} \subseteq \Ind \bar{\mathcal{C}}$ is cogen-rigid and $\cogen \Inj \bar{\mathcal{F}} = \mathcal{F}$.
\end{itemize}
\begin{proof} (1) Clearly $\cogen U$ is closed under subobjects. By definition of cogen-rigid subsets $\cogen U$ is covariantly finite. We are left to show that $\cogen U$ is closed under extensions to be a torsion-free class, see \Cref{Char}. Consider a short exact sequence $0 \to X \to Y \to Z \to 0$ in $\mathcal{C}$ with $X,Z \in \cogen U$. Let $X \to \prod A$ be a monomorphism in $\bar{\mathcal{C}}$ where the product only involves objects in $U$. Taking a pushout $P$ yields a commutative diagram
\begin{equation*}
    \begin{tikzcd}
        0 \arrow[r] & \prod A \arrow[r] & P \arrow[r] & Z \arrow[r] & 0\\
        0\arrow[r] & X \arrow[r] \arrow[u] & Y\arrow[u] \arrow[r] & Z \arrow[r] \arrow[u, equals] &  0 
    \end{tikzcd}
\end{equation*}
with exact rows. Note that the canonical map $\Ext_{\bar{\mathcal{C}}}^1 (Z, \prod A) \to \prod \Ext_{\bar{\mathcal{C}}}^1 (Z, A)$ is always a monomorphism. Because $U$ is cogen-rigid, $\mathrm{Ext}_{\bar{\mathcal{C}}}^1 (Z,A) = 0$ and so the short exact sequence in the first row splits. It follows that $P \cong Z \oplus \prod A$ and since $Z \in \cogen U$ also $Y \in \cogen U$.

Next, we prove that $U \subseteq \Inj \bar{\mathcal{G}}$ for $\mathcal{G} = \cogen U$. By definition of a cogen-rigid subset $U$ is contained in  $\bar{\mathcal{G}}$. It is left to show that every $X \in U$ is injective in the exact category $\bar{\mathcal{G}}$. Let $0 \to X \to Y \to Z \to 0$ be a short exact sequence in $\bar{\mathcal{C}}$ with $Z \in \bar{\mathcal{G}}$ and write $Z = \varinjlim Z_i$ for $Z_i \in \cogen U$. Taking pullbacks $P_i$ yields commutative diagrams
\begin{equation*}
    \begin{tikzcd}
        0 \arrow[r] & X \arrow[r] & Y \arrow[r] & Z \arrow[r] & 0\\
        0\arrow[r] & X \arrow[r] \arrow[u, equals] & P_i \arrow[u] \arrow[r] & Z_i \arrow[r] \arrow[u] &  0 
    \end{tikzcd}
\end{equation*}
such that the short exact sequence in the first row is a filtered colimit of the short exact sequences in the second row, which split since $\Ext_{\bar{\mathcal{C}}}^1 (Z_i, X ) = 0$. It follows that $0 \to X \to Y \to Z \to 0$ is pure-exact so it also splits as $X$ is pure-injective. Thus $X$ is injective in $\bar{\mathcal{G}}$.

(2) First $\Inj \bar{\mathcal{F}} \subseteq \Ind \bar{\mathcal{C}}$ by \Cref{clsub} and $\mathcal{F} \subseteq \cogen \Inj \bar{\mathcal{F}}$ by \Cref{connect}. The second inclusion is an equality because $\mathcal{F} = \bar{\mathcal{F}} \cap \mathcal{C} \supseteq \cogen \Inj \bar{\mathcal{F}}$. Using this equality, it follows that $\Inj \bar{\mathcal{F}}$ is cogen-rigid since clearly  $\Ext^1_{\bar{\mathcal{C}}}(\mathcal{F},\Inj \bar{\mathcal{F}}) = 0$,  $\Inj \bar{\mathcal{F}} \subseteq \bar{\mathcal{F}}$ and $\mathcal{F}$ is covariantly finite by \Cref{Char}.
\end{proof}
\end{prop}

In order to assign a cogen-rigid subset $U \subseteq \Ind \bar{\mathcal{C}}$ to a torsion-free class $\mathcal{F}\subseteq \mathcal{C}$ in a unique way, we must restrict to cosilting subsets. With the following theorem we achieve our goal and obtain a classification of torsion pairs for essentially small abelian categories. Note that a similar correspondence has been established by Angeleri H\" ugel, Laking and Sentieri \cite[Theorem A]{HLS} for the case $\mathcal{C} = \mod R$ where $R$ is an Artinian ring. Whereas our approach relies on the connection between purity and exact structures, theirs is based on 2-term complexes of injective objects and the Ziegler spectrum of the derived category.

\begin{thm}\label{mai} Let $\mathcal{C}$ be an essentially small abelian category. There is a one-to-one correspondence 
\begin{equation*}
    \begin{Bmatrix}
        \text{torsion-free classes}\\
        \mathcal{F}\subseteq \mathcal{C}
    \end{Bmatrix} \longleftrightarrow 
    \begin{Bmatrix}
        \text{cosilting subsets}\\
        {U}\subseteq \Ind \bar{\mathcal{C}}
    \end{Bmatrix}
\end{equation*}
given by $\mathcal{F} \mapsto \Inj \bar{\mathcal{F}}$ and $U \mapsto \cogen U$. Moreover, cosilting subsets are closed.
\end{thm}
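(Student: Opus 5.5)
The plan is to show that the two assignments are well defined and mutually inverse. Most of the work is already contained in \Cref{keylem}.

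\textbf{The companion set.} For a torsion-free class $\mathcal{F}\subseteq\mathcal{C}$ with torsion pair $(\mathcal{T},\mathcal{F})$, I will set
\[
V_{\mathcal{F}}=\{E\in\Inj\bar{\mathcal{C}}\mid \Hom_{\bar{\mathcal{C}}}(\mathcal{F},E)=0\}.
\]
Since every object of $\bar{\mathcal{F}}$ is a filtered colimit of objects of $\mathcal{F}$, and $\Hom(\varinjlim F_i,E)=\varprojlim\Hom(F_i,E)$, this condition is equivalent to $\Hom(\bar{\mathcal{F}},E)=0$.

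\textbf{Step 1: $\Inj\bar{\mathcal{F}}$ is cosilting.} By \Cref{keylem}(2), $\Inj\bar{\mathcal{F}}$ is cogen-rigid. Since $\bar{\mathcal{F}}$ is a torsion-free class in the Grothendieck category $\bar{\mathcal{C}}$, it is closed under products. Hence every product of objects of $\Inj\bar{\mathcal{F}}$ lies in $\bar{\mathcal{F}}$ and is killed by $V_{\mathcal{F}}$, so $(\Inj\bar{\mathcal{F}},V_{\mathcal{F}})$ is a cogen-rigid pair.

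For maximality, let $(U',V')\ge(\Inj\bar{\mathcal{F}},V_{\mathcal{F}})$ and put $\mathcal{G}=\cogen U'$. By \Cref{keylem}(1), $\mathcal{G}$ is a torsion-free class with $\mathcal{F}\subseteq\mathcal{G}$ and $U'\subseteq\Inj\bar{\mathcal{G}}$. Moreover $\Hom(\mathcal{G},V')=0$, because every object of $\mathcal{G}$ embeds into a product of objects of $U'$ and $V'$ consists of injectives. I will show the following two claims.
\begin{itemize}
\item[(a)] $\mathcal{G}=\mathcal{F}$.
\item[(b)] $U'\subseteq\Inj\bar{\mathcal{F}}$ and $V'\subseteq V_{\mathcal{F}}$.
\end{itemize}
Given (a), claim (b) is immediate: $U'\subseteq\Inj\bar{\mathcal{G}}=\Inj\bar{\mathcal{F}}$, and $V'\subseteq V_{\mathcal{F}}$ because $\Hom(\mathcal{F},V')=\Hom(\mathcal{G},V')=0$.

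\textbf{Step 2 (the main obstacle): proving (a).} Suppose $X\in\mathcal{G}\setminus\mathcal{F}$. Its torsion part $tX\in\mathcal{T}$ is then nonzero, and $tX\in\mathcal{G}$ since $\mathcal{G}$ is closed under subobjects. So it suffices to produce $E\in\Inj\bar{\mathcal{C}}$ with $\Hom(\mathcal{F},E)=0$ and $\Hom(tX,E)\neq0$. Then $E\in V_{\mathcal{F}}\subseteq V'$, while $tX\in\mathcal{G}$ and $\Hom(\mathcal{G},V')=0$, a contradiction.

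My first attempt is to consider the torsion pair $(\bar{\mathcal{T}},\bar{\mathcal{F}})$ in $\bar{\mathcal{C}}$. I would choose a nonzero subobject $Y\le tX$ whose injective envelope $E(Y)$ in $\bar{\mathcal{C}}$ is indecomposable, for instance a uniform subobject. Since $\mathcal{T}$ is closed under images, the image of any $F\to E(Y)$ with $F\in\mathcal{F}$ meets $Y$ in a subobject lying in both $\mathcal{T}$ and $\mathcal{F}$, hence is zero. Essentiality of $Y\le E(Y)$ then forces $\Hom(F,E(Y))=0$.

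The existence of a uniform subobject is the delicate point in a general Grothendieck category. If it fails, the fallback is to argue with Ext-vanishing against $U'$. I expect this step to need the most care.

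\textbf{Step 3: the composite $U\mapsto\cogen U\mapsto\Inj\overline{\cogen U}$ is the identity.} Let $(U,V)$ be a maximal cogen-rigid pair and $\mathcal{G}=\cogen U$. By \Cref{keylem}(1), $\mathcal{G}$ is a torsion-free class and $U\subseteq\Inj\bar{\mathcal{G}}$. By Step 1, $\Inj\bar{\mathcal{G}}$ is cogen-rigid. Also $\Hom(\mathcal{G},V)=0$, because objects of $\mathcal{G}$ embed into products of objects of $U$. Passing to filtered colimits gives $\Hom(\bar{\mathcal{G}},V)=0$, and products of objects of $\Inj\bar{\mathcal{G}}$ lie in $\bar{\mathcal{G}}$. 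Hence $(\Inj\bar{\mathcal{G}},V)$ is a cogen-rigid pair dominating $(U,V)$, and maximality gives $U=\Inj\overline{\cogen U}$.

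\textbf{Conclusion.} The other composite is the identity because $\cogen\Inj\bar{\mathcal{F}}=\mathcal{F}$ by \Cref{keylem}(2), so the two maps are mutually inverse bijections. Finally, every cosilting $U$ equals $\Inj\bar{\mathcal{G}}$ for a torsion-free class $\mathcal{G}$, and is therefore closed by \Cref{clsub}.
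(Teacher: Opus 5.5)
Your set-up in Step 1, Step 3 and the conclusion are correct and match the paper. The gap is in Step 2, i.e.\ claim (a), which is the real content of the theorem.

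The local argument fails. A subobject of a torsion object need not be torsion, and the image $I$ of $F\to E(Y)$ is a quotient of a torsion-free object, so it need not be torsion-free. Hence $I\cap Y$ need not lie in $\mathcal{T}\cap\mathcal{F}$. More seriously, the injective $E$ you want does not exist in general, whether or not uniform subobjects exist. For $E$ injective, $\{Z\mid \Hom(Z,E)=0\}$ is a Serre subcategory. So $\Hom(\mathcal{F},E)=0$ forces $\Hom(tX,E)=0$ whenever $tX$ lies in the Serre closure of $\mathcal{F}$, for instance when $tX$ is a quotient of an object of $\mathcal{F}$.

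Here is a concrete counterexample. Let $A=k(1\to 2)$ and let $M$ be the projective-injective module with $0\to S_2\to M\to S_1\to 0$. Take $\mathcal{F}=\mathrm{add}(S_2\oplus M)$ and $\mathcal{T}=\mathrm{add}\,S_1$. The indecomposable injectives $S_1$ and $M$ both receive non-zero maps from $\mathcal{F}$, so $V_{\mathcal{F}}=\emptyset$. Step 2 uses only that $\mathcal{F}\subseteq\mathcal{G}$, that $\mathcal{G}$ is closed under subobjects, and that $\Hom(\mathcal{G},V_{\mathcal{F}})=0$. None of these excludes $\mathcal{G}=\mod A\neq\mathcal{F}$. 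What does exclude it is cogen-rigidity of $U'$: $S_2\in\Inj\bar{\mathcal{F}}\subseteq U'$ and $\Ext^1(S_1,S_2)\neq 0$.

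So the $V$-condition can at best give $\mathcal{G}\subseteq\mathcal{S}$, the Serre closure of $\mathcal{F}$. This is exactly what the paper extracts from $V\subseteq V'$, using Krause's description of Serre subcategories by indecomposable injectives. For the rest, the Ext-vanishing $\Ext^1(\mathcal{G},\Inj\bar{\mathcal{F}})=0$, which comes from $\Inj\bar{\mathcal{F}}\subseteq U'$, is indispensable.

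The paper uses this vanishing through \Cref{exactlem}. Since $\Inj\bar{\mathcal{F}}\subseteq\Inj\bar{\mathcal{G}}$, the left adjoint $L=(-)/t(-)\colon\bar{\mathcal{G}}\to\bar{\mathcal{F}}$ of the inclusion is exact. Hence any $Z\in\mathcal{G}$ admitting an epimorphism from an object of $\mathcal{F}$ already lies in $\mathcal{F}$. The paper then inducts on the length of a filtration of $X\in\mathcal{G}$ by quotients of objects of $\mathcal{F}$, pulling back the last step along an epimorphism from an object of $\mathcal{F}$, to get $\mathcal{G}=\mathcal{F}$.

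The key step can also be done by hand. Take $0\to K\to F\to Z\to 0$ with $F\in\mathcal{F}$ and $Z\in\mathcal{G}$, and choose an admissible monomorphism $K\to Q$ in $\bar{\mathcal{F}}$ into a product of objects of $\Inj\bar{\mathcal{F}}$. Since $\Ext^1(Z,Q)=0$, this map extends to $F$. Because $\Hom(\mathcal{T},\bar{\mathcal{F}})=0$, the extension splits $K$ off the kernel of $F\to Z/tZ$. Then $tZ$ embeds into an object of $\mathcal{F}$, so $tZ=0$.

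Your ``fallback'' would have to supply exactly these two ingredients, the Serre-closure containment and the Ext-vanishing, and combine them.
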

\begin{proof} By \Cref{keylem} the assignments yield $\mathcal{F} \mapsto \Inj \bar{\mathcal{F}} \mapsto \cogen \Inj \bar{\mathcal{F}} = \mathcal{F}$ for every torsion-free class $\mathcal{F} \subseteq \mathcal{C}$ and $U \mapsto \cogen U \mapsto \Inj \overline{\cogen U} \supseteq U$ for every cosilting subset $U \subseteq \Ind \bar{\mathcal{C}}$. Further $\Inj \bar{\mathcal{F}}\subseteq \Ind \bar{\mathcal{C}}$ is closed by  \Cref{clsub}. It is left to show that $\Inj \bar{\mathcal{F}}$ is cosilting and not only cogen-rigid, and $U = \Inj \overline{\cogen U}$. We begin with the latter.  

Let $V$ be the collection of all $Y\in \Inj \bar{\mathcal{C}}$ with $\Hom_{\bar{\mathcal{C}}}(X,Y) = 0$ for every product $X$ of objects in $U$. By definition of being cosilting $(U,V)$ is a maximal cogen-rigid pair. One easily checks that $\Hom_{\bar{\mathcal{C}}}(\cogen U, V) = 0$ which implies $\Hom_{\bar{\mathcal{C}}}(\overline{\cogen U}, V) = 0$. It follows that $(U,V) \leq (\Inj \overline{\cogen U}, V)$ as cogen-rigid pairs and so $U = \Inj \overline{\cogen U}$ by the maximality condition.

We know that $\Inj \bar{\mathcal{F}}$ is cogen-rigid and must show that $(\Inj \bar{\mathcal{F}}, V)$ is a maximal cogen-rigid pair, where $V$ is the collection of all $Y\in \Inj \bar{\mathcal{C}}$ with $\Hom_{\bar{\mathcal{C}}}(X,Y) = 0$ for all products $X$ of objects in $\Inj \bar{\mathcal{F}}$ or equivalently $\Hom_{\bar{\mathcal{C}}}(\mathcal{F}, Y) = 0$. Let $(U',V')$ be a cogen-rigid pair with $\Inj \bar{\mathcal{F}} \subseteq U'$ and $V \subseteq V'$. Then $\mathcal{F}' = \cogen U'$ is a torsion-free class with $\Inj \bar{\mathcal{F}} \subseteq \Inj \bar{\mathcal{F}'}$ by \Cref{keylem}. By \cite[Theorem 4.2]{Krause2} the condition $V \subseteq V'$ implies that the smallest Serre subcategory $\mathcal{S} \subseteq \mathcal{C}$ containing $\mathcal{F}$ also contains $\mathcal{F}'$. Note that $\mathcal{S}$ consists of all objects in $\mathcal{C}$ that are filtered by quotients of objects in $\mathcal{F}$. It follows that $\mathcal{F}' = \bigcup_{n\geq 0} \mathcal{F}'_n$ where $\mathcal{F}'_n \subseteq \mathcal{F}'$ is the full subcategory of all objects filtered by $n$-many quotients of objects in $\mathcal{F}$. We will show that $\mathcal{F}'_n \subseteq \mathcal{F}$ for all $n$ but before we make the following observation. 

For $X\in \bar{\mathcal{C}}$ consider the short exact sequence $0 \to tX \to X \to X/tX \to 0$ where $tX$ is contained in the torsion class corresponding to $\bar{\mathcal{F}}$ and $X/tX \in \bar{\mathcal{F}}$. Then $X\to X/tX$ is a left $\bar{\mathcal{F}}$-approximation and $L\colon \bar{\mathcal{F}'}\to \bar{\mathcal{F}}, X \mapsto X/tX$ is a left adjoint of the inclusion $\iota \colon \bar{\mathcal{F}} \to \bar{\mathcal{F}'}$. The condition $\Inj \bar{\mathcal{F}} \subseteq \Inj \bar{\mathcal{F}'}$ implies that $L$ is exact by \Cref{exactlem}. It follows that for every epimorphism $f\colon Y\to X$ with $X \in \mathcal{F}'$ and $Y\in \mathcal{F}$ already  $X\in \mathcal{F}$. Indeed, applying the exact functor $L$ to the short exact sequence $0 \to \ker f \to Y\to X\to 0$ shows that $LX = X$.

Now, for $X \in \mathcal{F}_n'$ there is a short exact sequence $0 \to X' \to X \to X'' \to 0$ with $X' \in \mathcal{F}'_{n-1}$ and $X''$ is the quotient of some $Y \in \mathcal{F}$. By induction we may assume that $X' \in \mathcal{F}$. Taking a pullback $P$ yields a commutative diagram
\begin{equation*}
\begin{tikzcd}
    0 \arrow[r] & X' \arrow[d, equals] \arrow[r] & P \arrow[r] \arrow[d] & Y \arrow[r] \arrow[d] & 0 \\
    0 \arrow[r] & X' \arrow[r] & X \arrow[r] & X'' \arrow[r] & 0
\end{tikzcd}
\end{equation*}
with exact rows. Since $P \to X$ is epic and $P \in \mathcal{F}$ it follows from the previous observation that $X\in \mathcal{F}$. In total, we have shown that $\mathcal{F}' = \bigcup_{n\geq 0 }\mathcal{F}'_n = \mathcal{F}$ and in particular $\Inj \bar{\mathcal{F}} = \Inj \bar{\mathcal{F}'}$. Hence $\Inj \bar{\mathcal{F}}$ is cosilting.
\end{proof}

The proof of \Cref{mai} is based on viewing torsion-free classes naturally as exact categories. It uses the following lemma, which is well-known in the abelian case and generalizes to the exact context.

\begin{lem}\label{exactlem} Let $\mathcal{A}, \mathcal{B}$ be exact categories and  $L : \mathcal{A} \rightleftarrows \mathcal{B} : R$ an adjoint pair $L \dashv R$ with the following property. There is a collection $U$ of injectives in $\mathcal{B}$ such that every $X\in \mathcal{B}$ admits an admissible monomorphism $X\to Y$ where $Y$ is a product of objects in $U$, and $RX$ is injective in $\mathcal{A}$ for all $X \in U$. Then $L$ is exact.
\end{lem}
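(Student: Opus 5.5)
Given an admissible short exact sequence $0 \to X \xrightarrow{f} Y \xrightarrow{g} Z \to 0$ in $\mathcal{A}$, we must show that $0 \to LX \to LY \to LZ \to 0$ is admissible exact in $\mathcal{B}$. The argument has two halves: first obtain exactness after applying $\Hom_{\mathcal{B}}(-,I)$ for injectives $I$ built from $U$, then use the cogenerating property of $U$ to recover admissibility from this Hom-exactness.

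The first half is formal. For $I \in U$, adjunction gives a natural isomorphism $\Hom_{\mathcal{B}}(L-,I)\cong \Hom_{\mathcal{A}}(-,RI)$. Since $RI$ is injective in $\mathcal{A}$, the sequence
\begin{equation*}
0\to \Hom_{\mathcal{A}}(Z,RI)\to \Hom_{\mathcal{A}}(Y,RI)\to\Hom_{\mathcal{A}}(X,RI)\to 0
\end{equation*}
is exact. Hence $\Hom_{\mathcal{B}}(L(-),I)$ is exact on our sequence. The right adjoint $R$ preserves products, so this extends to arbitrary products $I=\prod I_k$ of objects of $U$, and $\prod I_k$ is again injective in $\mathcal{B}$. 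We also note that $L$, being a left adjoint, is right exact in the sense that $LY \to LZ$ is a cokernel of $LX \to LY$, since $L$ preserves cokernels.

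The second half, recovering admissibility, is the main step. First we show that $Lf$ is an admissible monomorphism. Choose an admissible monomorphism $j\colon LX\to Q$ with $Q$ a product of objects in $U$. Surjectivity of $\Hom(LY,Q)\to\Hom(LX,Q)$ yields $h\colon LY\to Q$ with $h\circ Lf=j$. We then want to invoke the standard fact for exact categories (B\"uhler, ``obscure axiom'', which needs weak idempotent completeness, or its variant): if $h\circ Lf$ is an admissible monic, then $Lf$ is an admissible monic. If weak idempotent completeness is not available, I would argue instead via a pushout. Form the pushout of $j$ along the cokernel structure to build a sequence $0\to LX\to E\to LZ\to 0$; then use the fact that $(Lf, h)\colon LX \to LY\oplus Q$ is an admissible monomorphism (as $j$ is), and show that $LY$ is a retract-type pushout. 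This is the technical point I expect to be the main obstacle. Once $Lf$ is an admissible monic, its cokernel is $Lg$ because $L$ preserves cokernels, so the sequence $0\to LX\to LY\to LZ\to 0$ is admissible exact.

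An alternative route to the second half, if the obscure axiom is problematic, is to compare with a sequence already known to be admissible. Take an admissible monic $Lf'\colon LX \to LY \oplus Q$, given by $(Lf,j)$, which is admissible since $j$ is admissible and $Q$ is injective (the split part of the map). Its cokernel $C$ then fits into an admissible sequence, and the surjectivity established in the first half, applied with injectives cogenerating $C$, shows that the induced map back onto the $Q$-component splits. Hence $Lf$ is a direct summand of an admissible monic, and so is itself admissible. In either approach, the key input is that Hom-exactness against a cogenerating family of injectives forces admissibility.
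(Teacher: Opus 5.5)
Your main line is the paper's proof: adjunction and injectivity of $RQ$ (a product of injectives, since $R$ preserves products) make the admissible monic $j\colon LX\to Q$ factor as $h\circ Lf$, and the obscure axiom then makes $Lf$ an admissible monic with cokernel $Lg$. Your hedge is unfounded, though: the obscure axiom (B\"uhler, Prop.~2.16) holds in every exact category for any morphism that admits a cokernel, and only the variant without that hypothesis needs weak idempotent completeness; you already noted that $Lf$ has cokernel $Lg$ because $L$ preserves colimits, so the vague fallback arguments are unnecessary and can be dropped.
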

\begin{proof} Let $0 \to X \to Y \to Z \to 0$ be an admissible short exact sequence in $\mathcal{A}$. Then $LX \to LY$ is a morphism in $\mathcal{B}$ with cokernel $LZ$ since $L$ commutes with colimits as a left adjoint. Consider an admissible monomorphism $LX \to Q$ where $Q$ is a product of objects in $U$. This morphism corresponds, via adjointness, to a morphism $X \to RQ$ where $RQ$ is a product of injectives in $\mathcal{A}$ by assumption\linebreak ($R$ commutes with products as a right adjoint). By injectivity $X\to RQ$ factors through $X\to Y$ and so $LX \to Q$ factors through $LX \to LY$. It follows from the Obscure axiom, see \cite[Proposition 2.16]{Buehler}, that $LX \to LY$ is an admissible monomorphism and so 
$0\to LX \to LY \to LZ \to 0$ is an admissible short exact sequence. Hence $L$ is exact.
\end{proof}

\begin{coro} Let $U \subseteq \Ind \bar{\mathcal{C}}$ be a cogen-rigid subset.
\begin{itemize}
    \item[(1)] There is a cosilting subset of $\Ind \bar{\mathcal{C}}$ containing $U$.
    \item[(2)] The closure of $U$ in $\Ind \bar{\mathcal{C}}$ is cogen-rigid.
\end{itemize}
\end{coro}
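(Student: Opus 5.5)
Both parts follow from \Cref{keylem} and \Cref{mai}, applied to the torsion-free class $\mathcal{F} = \cogen U$ determined by $U$.

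For (1), set $\mathcal{F} = \cogen U$. By \Cref{keylem}(1), $\mathcal{F}$ is a torsion-free class in $\mathcal{C}$ and $U \subseteq \Inj \bar{\mathcal{F}}$. By \Cref{mai}, $\Inj \bar{\mathcal{F}}$ is cosilting, since it is the image of the torsion-free class $\mathcal{F}$ under the bijection. So $\Inj \overline{\cogen U}$ is a cosilting subset containing $U$.

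For (2), let $\bar U$ denote the closure of $U$ in $\Ind \bar{\mathcal{C}}$. By \Cref{clsub}, $\Inj \bar{\mathcal{F}}$ is closed, and it contains $U$. Hence
\[
U \subseteq \bar U \subseteq \Inj \bar{\mathcal{F}}.
\]
The plan is to show $\cogen \bar U = \mathcal{F}$ and then check the three conditions of cogen-rigidity for $\bar U$ using this equality.

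First, $\cogen$ is monotone, so $\mathcal{F} = \cogen U \subseteq \cogen \bar U \subseteq \cogen \Inj \bar{\mathcal{F}}$. By \Cref{keylem}(2) the last term equals $\mathcal{F}$, so $\cogen \bar U = \mathcal{F}$.

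Now the three conditions:
\begin{itemize}
\item[(a)] $\Ext^1_{\bar{\mathcal{C}}}(\mathcal{F}, \bar U) = 0$. This holds because $\Ext^1_{\bar{\mathcal{C}}}(\mathcal{F}, \Inj \bar{\mathcal{F}}) = 0$, as noted in the proof of \Cref{keylem}(2).
\item[(b)] Every $X \in \bar U$ is a filtered colimit of objects in $\cogen \bar U = \mathcal{F}$. This holds because $X \in \Inj \bar{\mathcal{F}} \subseteq \bar{\mathcal{F}} = \varinjlim \mathcal{F}$.
\item[(c)] $\cogen \bar U = \mathcal{F}$ is covariantly finite, since $\mathcal{F}$ is a torsion-free class and \Cref{Char} applies.
\end{itemize}
So $\bar U$ is cogen-rigid.

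There is no substantial obstacle. The only point needing care is that both the Ext-vanishing and the filtered-colimit condition pass from $U$ to its closure, and this is handled by trapping $\bar U$ inside the closed set $\Inj \bar{\mathcal{F}}$ rather than arguing about closure directly.
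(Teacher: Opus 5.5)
Your proposal is correct and follows the paper's proof: both embed $U$ in the closed cosilting set $\Inj \overline{\cogen U}$ obtained from \Cref{keylem} and \Cref{mai}, deduce $\cogen U = \cogen \bar{U} = \cogen \Inj \overline{\cogen U}$, and conclude that $\bar{U}$ is cogen-rigid. The only difference is that you explicitly verify the three cogen-rigidity conditions, which the paper leaves as ``it follows''.
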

\begin{proof} By \Cref{keylem} and \Cref{mai} the cogen-rigid subset $U$ is contained in the cosilting subset $U' = \Inj \overline{\cogen U}$ which is closed in $\Ind \bar{\mathcal{C}}$. Thus, the closure $\bar{U}$ of $U$ in $\Ind \bar{\mathcal{C}}$ is contained in $U'$ and fulfills $\cogen U = \cogen \bar{U} = \cogen U'$. It follows that $\bar{U}$ is cogen-rigid. 
\end{proof}

\section{The case of an Artin algebra}

Let $A$ be an Artin algebra (for example a finite dimensional algebra), $\Mod A$ the category of (left) $A$-modules, $\mod A \subseteq \Mod A$ the full subcategory of finite length modules, and set $\Ind A = \Ind \Mod A$. Torsion pairs in $\mod A$ can be classified via cosilting subsets $U \subseteq \Ind A$ by \Cref{mai}. One goal will be to characterize cosilting subsets in terms of the (infinite) Auslander-Reiten translation in the spirit of $\tau$-tilting theory \cite{air}.

\begin{df} \rm We follow the definition in \cite{Krause0} to deal with arbitrary $A$-modules. Let $\underline{\mathrm{Mod}}\, A$ and $\overline{\mathrm{Mod}}\,A$ be the projectively and injectively stable module categories, respectively. The \emph{Auslander-Reiten translation} is the functor $\tau \colon \underline{\mathrm{Mod}}\, A \to \overline{\mathrm{Mod}}\, A$ 
defined by the exact sequences
\begin{align*}
P_1 \longrightarrow P_0 \longrightarrow X \longrightarrow 0, \qquad 0 \longrightarrow \tau X \longrightarrow DA \otimes_A P_1 \to DA\otimes_A P_0
\end{align*}
for $X\in \Mod A$. Here $D$ is the standard duality and the first exact sequence is a minimal projective presentation of $X$. The definition of $\tau$ on morphisms is as usual given by lifting morphisms along projective covers. 
\end{df}

The main connection between the Auslander-Reiten translation and the Ziegler spectrum is the following.

\begin{prop} \label{homeo}\cite[Proposition 5.5 and Corollary 5.15]{Krause0} The Auslander-Reiten translation $\tau\colon \underline{\mathrm{Mod}}\, A \to \overline{\mathrm{Mod}}\, A$ is an equivalence and induces a homeomorphism $\Ind A \setminus \mathrm{Proj}\,A \to \Ind A \setminus \Inj A$ where $\mathrm{Proj}\,A$ and $\mathrm{Inj}\,A$ are the sets of indecomposable projective and injective $A$-modules, respectively.
\end{prop}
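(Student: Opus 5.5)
The statement splits into two parts. The first is that $\tau\colon \underline{\mathrm{Mod}}\,A \to \overline{\mathrm{Mod}}\,A$ is an equivalence. The second is that it restricts to a homeomorphism $\Ind A\setminus \mathrm{Proj}\,A \to \Ind A\setminus \Inj A$. I would treat them in that order.

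For the equivalence, I would build an explicit quasi-inverse $\tau^-$. It is defined dually, using a minimal injective copresentation $0\to X\to I_0\to I_1$ and then $\tau^- X=\mathrm{Coker}(\Hom_A(DA,I_0)\to \Hom_A(DA,I_1))$. The tool is the Nakayama functor $\nu = DA\otimes_A -$, which restricts to an equivalence $\mathrm{Proj}\,A\to \mathrm{Inj}\,A$ with inverse $\Hom_A(DA,-)$. This holds for arbitrary (infinite) projectives and injectives over an Artin algebra, since $A$ is Noetherian, so sums of injectives are injective. I would then check three things:
\begin{itemize}
\item the sequence $0\to\tau X\to \nu P_1\to \nu P_0$ is a minimal injective copresentation of $\tau X$ when $X$ has no projective summands;
\item applying $\Hom_A(DA,-)$ to it recovers $P_1\to P_0\to X\to 0$, so $\tau^-\tau X\cong X$;
\item the same holds dually for $\tau\tau^-$.
\end{itemize}
On morphisms, lifting along presentations is well defined modulo maps factoring through projectives (respectively injectives), and this yields the natural isomorphisms. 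Minimality over arbitrary modules needs projective covers in $\Mod A$. I would obtain these from $A$ being semiperfect and Artin, where $\Mod A$ is perfect, so projective covers exist for all modules.

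For the topological part, I would first identify indecomposables. An indecomposable non-projective $X$ corresponds to an indecomposable object of $\underline{\mathrm{Mod}}\,A$, and $\tau X$ is then indecomposable and non-injective. Preservation of pure-injectivity is the main obstacle. The plan is to show that $\tau$ commutes with filtered colimits and with products. Filtered colimits work because $DA\otimes_A-$ and finite presentations behave well on finitely presented data. Products work because products of projective presentations over a Noetherian ring remain projective presentations, $A$ being Artin and hence coherent and perfect, so products of projectives are projective. One then characterizes pure-injectivity via summation maps $X^{(I)}\to X$ extending to $X^I$ (Jensen–Lenzing), which transports along $\tau$ up to the stable-category ambiguity. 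A cleaner route is to express $\tau X \cong D\,\mathrm{Ext}^1_A(X,A)$-type formulas through the Auslander–Bridger transpose, $\tau = D\,\mathrm{Tr}$, extended to infinite modules. In that setting $D$ sends modules to pure-injectives and $\mathrm{Tr}$ interacts with the Elementary duality on the Ziegler spectrum.

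For the homeomorphism, I would use that Ziegler closed sets are determined by the vanishing of finitely presented functors $F\in\fp(\mod A^{\op},\Ab)$, that is, by pairs of pp-formulas. The key step is a functorial correspondence: for $F$ finitely presented on $\mod A$, there is a finitely presented $F'$ with $F(\tau X)=0 \iff F'(X)=0$ for $X$ non-projective. I would construct $F'$ by precomposing with $\tau^-$ on $\mod A$. Finitely presented functors on the stable categories correspond to those on $\mod A$ vanishing on projectives (respectively injectives), and $\tau$ restricts to the classical equivalence $\underline{\mathrm{mod}}\,A\simeq\overline{\mathrm{mod}}\,A$. Its compatibility with the evaluation on infinite modules follows from $\tau$ commuting with filtered colimits. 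This gives continuity of $\tau$, and the symmetric argument with $\tau^-$ gives continuity of the inverse.
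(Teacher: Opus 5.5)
The paper gives no proof of this statement; it quotes it from Krause, so there is no internal argument to compare with. Your first half is the standard argument and is sound: you define $\tau^-$ by minimal injective copresentations, use that $DA\otimes_A-$ is an equivalence from all projectives to all injectives with inverse $\Hom_A(DA,-)$, and check minimality by splitting off summands of two-term complexes.

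The gap is in the topological half. It hinges on ``$\tau$ commutes with filtered colimits'', which you never establish. As stated it is not even meaningful: minimal projective presentations are not functorial, so for a directed system $(X_i)$ the modules $\tau X_i$ form a directed system only in $\overline{\mathrm{Mod}}\,A$, which has no colimits. Your justification (``$DA\otimes_A-$ and finite presentations behave well on finitely presented data'') does not address this. One repair is a functorial presentation commuting with filtered colimits, such as free modules on underlying sets. It gives $\ker(DA\otimes P_1\to DA\otimes P_0)\cong\tau X\oplus E$ with $E$ injective, and $E$ is invisible to functors killing injectives.

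A cleaner fix needs no colimits. Let $H$ be a finitely presented functor on $\overline{\mathrm{mod}}\,A$ and $\vec H$ its extension to $\Mod A$ commuting with filtered colimits. Then $H=\mathrm{Coker}(\overline{\Hom}_A(N,-)\to\overline{\Hom}_A(M,-))$ with $M,N\in\mod A$ having no injective summands. The same formula computes $\vec H$, since $\overline{\Hom}_A(M,-)=\mathrm{Coker}(\Hom_A(I(M),-)\to\Hom_A(M,-))$, where $I(M)$ is the injective envelope. Full faithfulness of $\tau$ on $\underline{\mathrm{Mod}}\,A$ gives a natural isomorphism $\overline{\Hom}_A(M,\tau X)\cong\underline{\Hom}_A(\tau^-M,X)$. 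Hence $\vec H(\tau X)\cong\overrightarrow{H\circ\tau}(X)$ for every $X$. So you want $F'=F\circ\tau$, not precomposition with $\tau^-$.

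You also need, and did not state, that the subspace topology on $\Ind A\setminus\Inj A$ is generated by the sets $(H)$ with $H(\Inj A)=0$. This holds because the finitely many indecomposable injectives have finite endolength, hence are closed points. So $\Ind A\setminus\Inj A$ is open, and every basic open contained in it comes from such an $H$.

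For pure-injectivity, the Jensen--Lenzing plan works, but you must absorb the ``stable-category ambiguity'' explicitly, and the two directions need different facts. Over an Artin algebra, $\tau$ and $\tau^-$ commute on the nose with direct sums and products. Minimal presentations and copresentations are preserved by both operations, and $DA\otimes_A-$ and $\Hom_A(DA,-)$ commute with both. The images of the summation map and of $\iota\colon X^{(I)}\to X^I$ then agree with the canonical maps $s'$ and $\iota'$ modulo maps through injectives (resp.\ projectives). This uses that sums and products of injectives, and of projectives, are again injective, resp.\ projective. For $\tau$ you are left with $s'-t'\iota'$ factoring through an injective, and this error extends along the monomorphism $\iota'$. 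For $\tau^-$ the error factors through a projective, so you need that projectives over an Artin algebra are pure-injective and that $\iota'$ is a pure monomorphism.

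Finally, drop the ``cleaner route'': on infinite modules $\tau\neq D\,\mathrm{Tr}$. For example, $\tau(M^{(\mathbb N)})=(\tau M)^{(\mathbb N)}$, while $\mathrm{Tr}$ turns the sum into a product, giving $D\,\mathrm{Tr}(M^{(\mathbb N)})=D\bigl((\mathrm{Tr}\,M)^{\mathbb N}\bigr)$. So the fact that $D$ produces pure-injectives says nothing about $\tau X$.
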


The inverse of $\tau$ is denoted by $\tau^{-}$ and is defined by the exact sequences 
\begin{align*}
0 \longrightarrow X \longrightarrow I_0 \longrightarrow I_1, \qquad \Hom_{A}(DA,I_0)  \to \Hom_{A}(DA, I_1) \longrightarrow \tau^- X \longrightarrow 0
\end{align*}
for $X\in \Mod A$ where the first exact sequence is a minimal injective presentation. We introduce the following notions analogous to classical $\tau$-tilting theory. 

\begin{df}\label{cotautilting}\rm For $X,Y\in \Mod A$ let $\Hom_A(X,Y)^\mathrm{fin} \subseteq \Hom_A(X,Y)$ be the subgroup of morphisms with finite length image.
\begin{itemize}
    \item[(1)] A subset $U\subseteq \Ind A$ is \emph{$\tau^-$-rigid} if $\Hom_A^\mathrm{fin}(\tau^- U, U) = 0$.
    \item[(2)] We call $(U, V)$ a \emph{$\tau ^-$-rigid} pair if $U \subseteq \Ind A$ is $\tau^-$-rigid and $V\subseteq \mathrm{Inj}\,A$ fulfills $\mathrm{Hom}_A(U,V) = 0$. There is a partial order on $\tau^-$-rigid pairs given by $(U, V) \leq (U', V')$ if $U\subseteq U'$ and $V\subseteq V'$.
    \item[(3)] A subset $U\subseteq \Ind A$ is \emph{support $\tau^-$-tilting} if there exists a maximal {$\tau^-$-rigid pair $(U, V)$}.
\end{itemize}
\end{df}

Next, we show that $\tau^-$-rigid subsets coincide with cogen-rigid subsets of $\Ind A$. The key ingredient is the following Auslander-Reiten formula. 

\begin{lem}\label{arf} \cite[Corollary]{Krause3a} For $X\in \mod A$ and $Y\in \Mod A$ there exists an isomorphism
\begin{align*}
    D\,\mathrm{Ext}^1_{A}(X, Y) \cong \overline{\mathrm{Hom}}_A(Y,\tau X) 
\end{align*}
functorial in $X$ and $Y$.
\end{lem}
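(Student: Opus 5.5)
The plan is to compute both sides directly from a projective presentation of $X$, using the Nakayama functor $\nu = DA\otimes_A -$ and the definition of $\tau$ given above. Choose a minimal projective presentation of $X$ and extend it one step:
$$P_2 \xrightarrow{p_2} P_1 \xrightarrow{p_1} P_0 \longrightarrow X \longrightarrow 0,$$
with all $P_i$ finitely generated projective; this is possible since $X\in\mod A$ and $A$ is an Artin algebra. The first step is the standard natural isomorphism $\Hom_A(Y,\nu P)\cong D\Hom_A(P,Y)$ for finitely generated projective $P$ and arbitrary $Y$. It comes from $\Hom_A(P,Y)\cong \Hom_A(P,A)\otimes_A Y$ together with tensor-hom adjunction: $D(P^*\otimes_A Y)\cong \Hom_A(Y,D(P^*))$ and $D(P^*)\cong \nu P$. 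It is natural in both $P$ and $Y$.

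Next, apply $\Hom_A(Y,-)$ to the left exact sequence $0\to\tau X\to \nu P_1\to \nu P_0$ that defines $\tau X$. By the first step this gives
$$\Hom_A(Y,\tau X)\cong \ker\bigl(D\Hom_A(P_1,Y)\to D\Hom_A(P_0,Y)\bigr)= D C,$$
where $C=\mathrm{coker}\bigl(\Hom_A(P_0,Y)\to \Hom_A(P_1,Y)\bigr)$. Now $\Ext^1_A(X,Y)$ is the subgroup $\ker\bigl(\Hom_A(p_2,Y)\bigr)/\im\bigl(\Hom_A(p_1,Y)\bigr)$ of $C$. The quotient $C/\Ext^1_A(X,Y)$ is identified, via $\Hom_A(p_2,Y)$, with the image $J$ of $\Hom_A(P_1,Y)\to\Hom_A(P_2,Y)$. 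Since $D$ is exact, restriction gives a natural epimorphism $\Hom_A(Y,\tau X)\cong DC\to D\Ext^1_A(X,Y)$, whose kernel is $DJ$. Here $DJ$ consists of the functionals on $C$ that factor through $J$.

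It remains to show that this kernel is exactly the subgroup of morphisms $Y\to\tau X$ factoring through an injective module; this identification is the main point. One inclusion follows from naturality in $Y$. If $f$ factors as $Y\to I\to\tau X$ with $I$ injective, then its image in $D\Ext^1_A(X,Y)$ factors through $D\Ext^1_A(X,I)=0$.

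For the converse, let $f$ lie in the kernel. Its functional on $C$ vanishes on $\Ext^1$, so it is induced by a functional on $J\subseteq \Hom_A(P_2,Y)$. Since the target of $D$ ($\mathbb{Q}/\mathbb{Z}$ or an injective envelope over the center) is injective, this functional extends to all of $\Hom_A(P_2,Y)$. By the first step it therefore corresponds to a morphism $g\colon Y\to \nu P_2$, and naturality in $P$ gives $\iota f=\nu(p_2)\circ g$, where $\iota\colon\tau X\to\nu P_1$ is the inclusion. Because $p_1p_2=0$, the map $\nu(p_2)$ lands in $\ker\nu(p_1)=\tau X$, so $\nu(p_2)=\iota h$ for some $h\colon\nu P_2\to\tau X$. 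Since $\iota$ is a monomorphism, $f=h\circ g$. Thus $f$ factors through $\nu P_2$, which is injective because $A$ is an Artin algebra and $P_2$ is finitely generated.

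Hence the epimorphism above induces an isomorphism $\overline{\Hom}_A(Y,\tau X)\cong D\Ext^1_A(X,Y)$. Functoriality in $Y$ follows from the naturality of each step. Functoriality in $X$ follows in the usual way by lifting morphisms to the chosen presentations; the choices disappear modulo maps factoring through injectives on one side and in $\Ext^1$ on the other. I expect the identification of the kernel with the injectively trivial maps to be the only nonformal step, and the observation that $\nu(p_2)$ factors through $\tau X$ is the key to it.
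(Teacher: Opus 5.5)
Your argument is correct, but it takes a different route from the proof behind the citation. The paper itself proves nothing here; it quotes Krause, where the formula is a corollary of Auslander's defect formula.

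In Krause's argument one sets $F=\mathrm{coker}\bigl(\Hom_A(P_0,-)\to\Hom_A(P_1,-)\bigr)$ and observes $DF\cong\Hom_A(-,\tau X)$; this is exactly your first two steps. One then applies the snake lemma to $\Hom_A(p_1,-)$ along an arbitrary short exact sequence $0\to Y\to Y'\to Y''\to 0$. This gives $\mathrm{coker}\bigl(\Hom_A(X,Y')\to\Hom_A(X,Y'')\bigr)\cong\ker(FY\to FY')$. After dualizing and taking $Y'=I$ injective, the left side becomes $D\Ext^1_A(X,Y)$. The right side becomes $\mathrm{coker}\bigl(\Hom_A(I,\tau X)\to\Hom_A(Y,\tau X)\bigr)=\overline{\mathrm{Hom}}_A(Y,\tau X)$.

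So Krause resolves $Y$ on the injective side and needs only $P_1\to P_0$. The description of the kernel by injectively trivial maps is built in, and naturality is inherited from the snake lemma.

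You instead resolve $X$ one step further and identify the kernel $DJ$ by hand. You extend the functional from $J$ to $\Hom_A(P_2,Y)$ using injectivity of $E$, and then use $\nu(p_2)=\iota h$.

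Your functoriality in $X$ is only sketched, though it does work. For two lifts $u_1,u_1'\colon P_1\to P_1'$ of $u\colon X\to X'$ one has $u_1-u_1'=s_0p_1+p_2's_1$. This difference vanishes on $\Ext^1$, and on $\tau X$ it yields maps that factor through the injective $\nu P_2'$.

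What your route buys is a sharper by-product. Every morphism $Y\to\tau X$ that factors through some injective already factors through the single map $h\colon\nu P_2\to\tau X$, independently of $Y$.

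One small correction: your argument only uses injectivity of $E$, but for the paper's $\tau$ the duality must be $D=\Hom_k(-,E)$ over the center $k$, not $\Hom_{\mathbb{Z}}(-,\mathbb{Q}/\mathbb{Z})$. The choice matters because $\nu$, and hence $\tau$, depend on it.
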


\begin{thm}\label{rechar}  Let $U \subseteq \Ind A$. Then $U$ is cogen-rigid if and only if $U$ is $\tau^-$-rigid.
\end{thm}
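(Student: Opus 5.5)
By \Cref{length}(3), $\mod A$ is a length category, so $U$ is cogen-rigid if and only if $\Ext^1_A(\cogen U, U) = 0$. Moreover $\cogen U \subseteq \mod A$ is exactly the class of finite length modules embedding into a product of modules in $U$. Since such a module embeds into a finite direct sum of modules in $U$, $\cogen U$ is the class of finite length submodules of finite direct sums of modules in $U$. The plan is therefore to prove
$$\Ext^1_A(X,Y)=0 \text{ for all } X\in\cogen U,\ Y\in U \iff \Hom^{\mathrm{fin}}_A(\tau^- Y, Z) = 0 \text{ for all } Y,Z\in U.$$
This is the infinite analogue of the classical criterion ``$\Hom(\tau^-Y,Z)=0$ iff $\Ext^1(\mathrm{Sub}\,Z, Y)=0$''. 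The tools are the Auslander--Reiten formula \Cref{arf} together with the equivalence $\tau\colon \underline{\mathrm{Mod}}\,A \to \overline{\mathrm{Mod}}\,A$ of \Cref{homeo}, whose inverse is $\tau^-$. Composing them gives, for $X\in\mod A$ and $Y \in \Mod A$, a functorial isomorphism
$$D\Ext^1_A(X,Y) \cong \overline{\Hom}_A(Y,\tau X) \cong \underline{\Hom}_A(\tau^- Y, X).$$
Injective $Y$ may be discarded on both sides, since then $\tau^-Y = 0$ and $\Ext^1(-,Y)=0$.

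($\Rightarrow$) Suppose $g\colon \tau^-Y \to Z$ is nonzero with finite length image $X\subseteq Z$. Then $X \in \cogen U$, since it is a finite length submodule of $Z\in U$, so by hypothesis $\Ext^1_A(X,Y)=0$. By the displayed isomorphism, the epimorphism $\pi\colon\tau^-Y\to X$ through which $g$ factors must factor through a projective module. Factoring through a projective, $\pi$ lifts along a projective cover $P\to X$, say via $h\colon\tau^-Y\to P$. I then want a contradiction, by replacing $X$ with a suitable finite length submodule $X'$ of $Z$ (or of $Z^n$) through which a nonzero part of $g$ still factors without this happening. Concretely, I would imitate the argument of Auslander--Smal\o{} used in \cite{air}. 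The image of the nonzero composite $P \to X \hookrightarrow Z$, precomposed with $h$, lands in a projective. Comparing $\Ext^1(-,Y)$ on $0\to \Omega X \to P\to X\to 0$ should produce a nonzero class in $\Ext^1_A(X'',Y)$ for some finite length submodule $X''$ of $Z$, using that $\tau^-Y$ has no nonzero projective summands. This step is where I expect the main difficulty: unlike the finite length case one cannot argue by dimension counts, and the ``finite image'' hypothesis has to be used precisely so that all the modules compared lie in $\mod A$, where \Cref{arf} applies.

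($\Leftarrow$) Let $X\in\cogen U$ and $Y\in U$, and take a monomorphism $X\hookrightarrow Z_1\oplus\dots\oplus Z_n$ with $Z_i\in U$. Suppose $\Ext^1_A(X,Y)\ne 0$. By the isomorphism there is $f\colon\tau^-Y\to X$ that does not factor through a projective; in particular $f\neq 0$. Composing with the inclusion into $\bigoplus Z_i$ and projecting gives nonzero maps $\tau^-Y\to Z_i$ for some $i$, and these have finite length image because they factor through $X$. This contradicts $\Hom^{\mathrm{fin}}_A(\tau^-U,U)=0$. Hence $\Ext^1_A(\cogen U,U)=0$ and $U$ is cogen-rigid. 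This direction is formal once the Auslander--Reiten isomorphism is available, so the real work is concentrated in the ($\Rightarrow$) direction.
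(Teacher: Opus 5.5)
Your reduction via \Cref{length}(3) and your ($\Leftarrow$) direction are correct and agree with the paper's argument. The paper embeds into a product and projects; your embedding into a finite direct sum is equivalent.

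The gap is in the ($\Rightarrow$) direction, which you leave unfinished. You correctly reach $\pi = p h$ with $p\colon P\to X$ a projective cover. You then propose to pass to other finite length submodules $X'$, $X''$ of $Z$ and to compare $\Ext^1$ along $0\to\Omega X\to P\to X\to 0$. No such argument is actually given, and none is needed.

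The missing idea is a direct splitting argument:
\begin{itemize}
\item[(1)] Since $\pi\colon\tau^-Y\to X$ is an epimorphism and $P$ is projective, $p$ lifts as $p=\pi s$ for some $s\colon P\to\tau^-Y$.
\item[(2)] Then $p=\pi s = p\,(hs)$. Because $\ker p$ is superfluous, $hs$ is surjective. Since $P$ has finite length, $hs$ is an automorphism of $P$.
\item[(3)] Hence $s$ is a split monomorphism, and $P$ is a direct summand of $\tau^-Y$.
\item[(4)] For $Y\in U$ non-injective, $\tau^-Y$ is an indecomposable non-projective point of $\Ind A$ by \Cref{homeo}; for $Y$ injective, $\tau^-Y=0$. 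Either way $P=0$, so $X=0$ and $g=0$, contradicting $g\neq 0$.
\end{itemize}
This is exactly how the paper concludes its (1)$\Rightarrow$(2) step. Without it, your proposal does not show that cogen-rigid subsets are $\tau^-$-rigid.
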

\begin{proof} We will show that for all $X,Y \in \Mod A$ the following are equivalent:
\begin{align*}
    \mathrm{(1)}\quad \mathrm{Ext}_A^1 (\cogen X, Y) = 0, \qquad \mathrm{(2)}\quad \Hom_A^\mathrm{fin}(\tau^- Y, X) = 0.
\end{align*}
Here $\cogen X$ denotes all finite length submodules of arbitrary products of $X$. This implies the statement of the theorem by \Cref{length} as $\mod A$ is a length category. For $Z \in \mod A$ we obtain the Auslander-Reiten formula 
    \begin{align*}
    D\,\mathrm{Ext}^1_{A}(Z, Y) \cong \overline{\mathrm{Hom}}_A(Y,\tau Z) \cong \underline{\mathrm{Hom}}_A(\tau^- Y,Z)  
\end{align*}
 by \Cref{arf} and \Cref{homeo}.
 
(2)$\implies$(1) If $\mathrm{Ext}^1_A(\cogen X, Y) \neq 0$ then by the formula above, there exists a non-zero morphism $\tau^{-}Y \to Z$ for $Z\in \cogen X$. Further, there is a monomorphism $Z\to \prod X$ and the composition $ \tau^{-}Y \to Z \to \prod X$ is non-zero. But then composing with a suitable projection $\prod X \to X$ yields a non-zero morphism $\tau^- Y \to X$ with a finite length image contradicting (2).

(1)$\implies$(2) Let $\tau^- Y \to X$ be a morphism with finite length image $Z$. By (1) and the Auslander-Reiten formula, the canonical epimorphism $\tau^{-} Y \to Z $ factors through a projective $A$-module $P$ as $\tau^-{} Y \to P \to Z$. Without loss of generality we assume that $P \to Z $ is a projective cover. Now $P \to Z$ factors through the epimorphism $\tau^{-}Y \to Z$. In total $P \to Z$ factors as $P \to \tau^{-}Y \to P \to Z$ and it follows that $P \to \tau^{-}Y$ is a split monomorphism. But $\tau^- Y$ has no non-zero projective summands and so $P = 0$. Hence $\tau^- Y \to X$ is a zero morphism.
\end{proof}

\begin{coro}\label{taumain} There is a one-to-one correspondence 
\begin{equation*}
    \begin{Bmatrix}
        \text{torsion-free classes}\\
        \mathcal{F}\subseteq \mathrm{mod}\,A
    \end{Bmatrix} \longleftrightarrow 
    \begin{Bmatrix}
        \text{support $\tau^-$-tilting subsets}\\
        {U}\subseteq \Ind A
    \end{Bmatrix}
\end{equation*}
given by $\mathcal{F} \mapsto \Inj \bar{\mathcal{F}}$ and $U \mapsto \cogen U$. Moreover, support $\tau^-$-tilting subsets are closed.
\end{coro}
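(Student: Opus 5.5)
The plan is to deduce the corollary directly from \Cref{mai} applied to $\mathcal{C} = \mod A$, together with \Cref{rechar}. First I will identify the relevant objects. $\mod A$ is an essentially small abelian length category whose ind-completion is $\overline{\mod A} = \Mod A$. Hence $\Ind \bar{\mathcal{C}} = \Ind A$. The indecomposable injectives of the Grothendieck category $\Mod A$ are exactly the indecomposable injective $A$-modules, so $\Inj \bar{\mathcal{C}} = \Inj A$. The operation $\cogen U$ from \Cref{defimp} then agrees with the one used in \Cref{taumain}.

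Next I will show that cosilting subsets and support $\tau^-$-tilting subsets of $\Ind A$ coincide. By \Cref{rechar}, a subset $U \subseteq \Ind A$ is cogen-rigid if and only if it is $\tau^-$-rigid. For the pairs, \Cref{length}(3) applies because $\mod A$ is a length category. It says that $(U,V)$ with $V \subseteq \Inj A$ is a cogen-rigid pair if and only if $U$ is cogen-rigid and $\Hom_A(U,V)=0$. By \Cref{rechar} this is exactly the condition for $(U,V)$ to be a $\tau^-$-rigid pair in \Cref{cotautilting}(2). So the two posets of pairs are literally the same, with the same partial order. Therefore their maximal elements agree, and a subset is cosilting if and only if it is support $\tau^-$-tilting.

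Substituting this identification into \Cref{mai} gives the bijection $\mathcal{F} \mapsto \Inj \bar{\mathcal{F}}$, $U \mapsto \cogen U$. It also gives that support $\tau^-$-tilting subsets are closed, since cosilting subsets are closed.

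The only point needing care is the identification of pairs in \Cref{length}(3). One has to check that $\Hom_A(X,V)=0$ for all products $X$ of modules in $U$ is equivalent to $\Hom_A(U,V)=0$. The argument is as follows. A nonzero map $\prod X \to Y$ with $Y$ injective is nonzero on some finite length submodule. That submodule embeds in a finite direct sum of modules in $U$. By injectivity of $Y$ the map extends to this finite sum, so some summand admits a nonzero map to $Y$. This step is already given in the paper, so I expect no real obstacle; the proof is essentially a translation of definitions.
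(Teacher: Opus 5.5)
Your proposal is correct and follows the paper's proof: the paper also uses \Cref{rechar}, with \Cref{length}(3) handling the pair condition, to identify cosilting subsets of $\Ind A$ with support $\tau^-$-tilting subsets, and then applies \Cref{mai} to $\mathcal{C} = \mod A$. Your checks that $\Hom_A(\prod X, V)=0$ reduces to $\Hom_A(U,V)=0$, that $\overline{\mod A} = \Mod A$, and that $\Inj \overline{\mod A} = \Inj A$ just spell out details the paper's one-line proof leaves implicit.
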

\begin{proof} By \Cref{rechar} a subset $U \subseteq \Ind A$ is support $\tau^-$-tilting if and only if $U$ is cosilting. Now apply \Cref{mai}.
\end{proof}

In the above classification we are possibly dealing with infinite length modules and the next goal is to reduce everything to finite length modules. The trade-off is that we have to consider ideals of the module category. A non-empty collection $\mathcal{I}$ of morphisms in $\mod A$ is an \emph{ideal} if for all $\varphi , \psi \in \mathcal{I}$ and arbitrary morphisms $\alpha, \beta$ in $\mod A$ we have $\varphi+ \psi \in \mathcal{I}$ and $\beta \varphi \alpha \in \mathcal{I}$ if the expressions are defined. We are interested in certain ideals introduced by Krause \cite{Krause}.

\begin{df}\rm An ideal $\mathcal{I}$ of $\mod A$ is \emph{fp-idempotent} if the full subcategory of finitely presented functors $F\colon \mod A \to \Ab$ with $F(\varphi) = 0$ for all $\varphi\in \mathcal{I}$ is closed under extensions. Recall that $F$ is \emph{finitely presented} if there exist $X,Y \in \mod A$ and a short exact sequence $\Hom_A(Y,-) \to \Hom_A(X,-) \to F \to 0$.
\end{df}

For $U \subseteq \Ind A$ let $\langle U\rangle$ be the ideal of morphisms in $\mod A$ that factor through a product of modules in $U$. The next result, due to Krause, is the main connection between fp-idempotent ideals and the Ziegler spectrum.

\begin{thm}\label{krau}\cite[Corollary 5.14]{Krause}  There is a one-to-one correspondence between closed sets $U \subseteq \Ind A$ and fp-idempotent ideals $\mathcal{I}$ of $\mod A$ given by $U \mapsto \langle U \rangle$.
\end{thm}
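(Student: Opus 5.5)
This correspondence is cited from Krause, so I would not rebuild it from scratch. Instead I would reduce it to the standard Ziegler-spectrum correspondence between closed sets and Serre subcategories of the functor category. Let $\mathcal{C}=\mod A$ and let $\fp(\mathcal{C},\Ab)$ be the abelian category of finitely presented functors. Every $F\in\fp(\mathcal{C},\Ab)$ has a unique extension $\vec F$ to $\Mod A$ commuting with filtered colimits, and this extension also commutes with products. The closed subsets of $\Ind A$ are exactly the sets
\[
U_{\mathcal{S}}=\{X\in\Ind A\mid \vec F(X)=0 \text{ for all } F\in\mathcal{S}\},
\]
where $\mathcal{S}$ runs over Serre subcategories of $\fp(\mathcal{C},\Ab)$. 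The assignments $\mathcal{S}\mapsto U_{\mathcal{S}}$ and $U\mapsto\{F\mid \vec F(U)=0\}$ are mutually inverse bijections.

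The plan is to produce mutually inverse bijections between fp-idempotent ideals and these Serre subcategories. To an ideal $\mathcal{I}$ I attach $\mathcal{S}_{\mathcal{I}}=\{F \mid F(\varphi)=0 \text{ for all } \varphi\in\mathcal{I}\}$. This class is always closed under subobjects and quotients, and fp-idempotency is exactly the extra condition that it is closed under extensions, so $\mathcal{S}_{\mathcal{I}}$ is a Serre subcategory. In the other direction, to a Serre subcategory $\mathcal{S}$ I attach $\mathcal{I}_{\mathcal{S}}=\{\varphi \mid F(\varphi)=0 \text{ for all } F\in\mathcal{S}\}$.

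The key step is to show $\mathcal{I}_{\mathcal{S}_{\mathcal{I}}}=\mathcal{I}$ and $\mathcal{S}_{\mathcal{I}_{\mathcal{S}}}=\mathcal{S}$. The first equality is the main obstacle. Given $\varphi\colon X\to Y$ with $\varphi\notin\mathcal{I}$, I need some $F$ killing $\mathcal{I}$ but not $\varphi$. I would take $F=\Hom(X,-)/\mathcal{I}(X,-)$, which kills all of $\mathcal{I}$ since $\mathcal{I}$ is an ideal, and satisfies $F(\varphi)(\mathrm{id}_X)=\bar\varphi\neq 0$. The difficulty is that this quotient need not be finitely presented. Here I would use that $\mod A$ over an Artin algebra has finitely presented functors of finite length, closed under filtered colimits in a suitable sense: write $\mathcal{I}(X,-)$ as a directed union of finitely generated subfunctors $\Hom(Y_i,-)\circ\psi_i$ with $\psi_i\in\mathcal{I}$. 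Then $F$ is the filtered colimit of finitely presented $F_i$, and some $F_i$ already fails to kill $\varphi$. This $F_i$ kills only $\psi_i$, so I would instead pass to the largest quotient of $F_i$ lying in $\mathcal{S}_{\mathcal{I}}$, which exists because $\mathcal{S}_{\mathcal{I}}$ is Serre and objects have finite length. The delicate point, and exactly where fp-idempotency enters, is showing that this quotient still does not kill $\varphi$.

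Finally, I would identify the composite map $U\mapsto\mathcal{S}\mapsto\mathcal{I}_{\mathcal{S}}$ with $U\mapsto\langle U\rangle$. A morphism $\varphi$ factoring through a product $\prod X_j$ with $X_j\in U$ is killed by every $F$ with $\vec F(U)=0$, because $\vec F$ preserves products. Conversely, if $\varphi\colon X\to Y$ is killed by all such $F$, I would consider the pushout of $\varphi$ along the canonical map from $X$ into a product of points of $U$. I would then use the fact that $\bar U$-definable subcategories are closed under pure submodules to conclude that $\varphi$ factors through the product. This second inclusion I also expect to be nontrivial, but it is the standard definable-subcategory argument.
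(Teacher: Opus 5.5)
\textbf{There is a genuine gap in the key step.} Reducing to Serre subcategories of $\fp(\mod A,\Ab)$ is the natural route; the paper itself only cites Krause. However, the step you single out as the main obstacle is not proved, and the mechanism you propose for it cannot work.

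\emph{Why the proposed mechanism fails.} Finitely presented functors on $\mod A$ have finite length only when $A$ is representation-finite. Otherwise $\Hom_A(A,-)$, which vanishes on no indecomposable, already has infinite length. So nothing guarantees a largest quotient of $F_i$ lying in $\mathcal{S}_{\mathcal{I}}$. In fact such quotients are exactly the functors $\Hom_A(X,-)/K$ with $K$ finitely generated and $\mathcal{I}(X,-)\subseteq K$. A largest one would be a smallest such $K$, and by the separation argument below this $K$ must equal $\mathcal{I}(X,-)$. So your construction works only when $\mathcal{I}(X,-)$ is finitely generated, and this fails already in Example~\ref{kro}. Take $\mathcal{I}=\langle G\rangle$ and $X$ the simple projective. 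If $\mathcal{I}(X,-)=\Hom_A(Z,-)\psi$ with $\psi\in\langle G\rangle$, one may take $Z$ to be a finite sum of preinjectives. But $\Hom_A(Z,Q)=0\neq\Hom_A(X,Q)$ for preinjectives $Q$ far enough to the left.

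\emph{What is missing or misdiagnosed.} The decisive claim, that the quotient still does not kill $\varphi$, is left open. You give no argument for $\mathcal{S}_{\mathcal{I}_{\mathcal{S}}}=\mathcal{S}$. Also, fp-idempotency does not enter where you say: it is needed only to make $\mathcal{S}_{\mathcal{I}}$ a Serre subcategory.

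\textbf{The missing idea: separation by $D\Hom_A(-,Y)$.} The functors $D\Hom_A(-,Y)$ are finitely presented. To see this, dualize $0\to\Hom_A(-,Y)\to\Hom_A(-,I_0)\to\Hom_A(-,I_1)$ for an injective copresentation of $Y$, and use that $D\Hom_A(-,I)$ is representable for $I$ injective. Now let $\varphi\colon X\to Y$ not lie in $\mathcal{I}$.
\begin{itemize}
\item[(a)] Choose $\chi\in D\bigl(\Hom_A(X,Y)/\mathcal{I}(X,Y)\bigr)$ with $\chi(\bar\varphi)\neq 0$.
\item[(b)] Let $F$ be the image of $\Phi\colon\Hom_A(X,-)\to D\Hom_A(-,Y)$, $\Phi_M(\alpha)(h)=\chi(\overline{h\alpha})$. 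Then $F$ is finitely presented.
\item[(c)] $F$ annihilates $\mathcal{I}$, since $h\theta\alpha\in\mathcal{I}$ whenever $\theta\in\mathcal{I}$.
\item[(d)] $F(\varphi)\neq 0$, since $\Phi_Y(\varphi)(1_Y)=\chi(\bar\varphi)$.
\end{itemize}
Hence $\mathcal{I}_{\mathcal{S}_{\mathcal{I}}}=\mathcal{I}$ for every ideal of $\mod A$, with no hypothesis on $\mathcal{I}$.

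\textbf{The other equality.} You need that a finitely presented $F$ annihilating $\langle U\rangle$ vanishes on $U$.
\begin{itemize}
\item[(a)] For $M\in U$, an element of $\vec F(M)$ is $\vec F(g)(x)$ with $g\colon X'\to M$ and $x\in F(X')$. The element $x$ generates a subfunctor $\Hom_A(X',-)/\Hom_A(Z,-)\psi$, and this subfunctor annihilates $\langle U\rangle$.
\item[(b)] For every $h\colon M\to N$ with $N\in\mod A$, the composite $hg$ lies in $\langle U\rangle$ and therefore factors through $\psi$.
\item[(c)] Hence, in the pushout $P$ of $g$ and $\psi$, every such $h$ extends along $M\to P$. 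Over an Artin algebra this makes $M\to P$ a pure monomorphism.
\item[(d)] Since $M$ is pure-injective, $M\to P$ splits, so $g$ factors through $\psi$ and $\vec F(g)(x)=0$.
\end{itemize}
Combined with the separation argument, this also gives your final inclusion directly: morphisms killed by every $F$ with $\vec F(U)=0$ lie in $\langle U\rangle$. No appeal to definable subcategories is needed.
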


The following result by Prest offers a nice description of fp-idempotent ideals.

\begin{prop}\label{pres}\cite[Corollary 4.7]{Prest05} Let $U \subseteq \Ind A$ and $\bar{U}$ its closure. The ideal $\langle \bar{U}\rangle$ consists of all morphisms in $\mod A$ that factor through a finite direct sum of objects in $U$. In particular $\langle U\rangle = \langle \bar{U} \rangle$ is always fp-idempotent. \end{prop}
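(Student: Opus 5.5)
The plan is to translate ``$\varphi$ factors through $M$'' into the vanishing or non-vanishing of finitely presented functors, since these are exactly what the Ziegler topology sees. Fix a morphism $\varphi\colon X\to Y$ in $\mod A$ and let $\mathcal{J}_U$ be the class of morphisms in $\mod A$ that factor through a finite direct sum of modules in $U$. It is immediate that $\mathcal{J}_U$ is an ideal. It is also clear that $\mathcal{J}_U\subseteq\langle U\rangle\subseteq\langle\bar U\rangle$, since a finite direct sum is a finite product. So everything comes down to the reverse inclusion $\langle\bar U\rangle\subseteq\mathcal{J}_U$.

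To get it, I would attach to $\varphi$ a finitely presented functor measuring factorization. Using the duality $\Hom_A(M,DN)\cong D(N\otimes_A M)$ for $N\in\mod A^{\op}$, I would rewrite ``$\varphi$ factors through $M$'' in terms of the finitely presented covariant functors $DY\otimes_A-$ and $\Hom_A(X,-)$, which commute with filtered colimits and products. The statement to aim for is: there is a finitely presented functor $F_\varphi\colon \Mod A\to\Ab$, extended to $\Mod A$ by commuting with filtered colimits, such that
\begin{itemize}
\item[(a)] $\varphi$ factors through $M$ whenever $F_\varphi(M)\neq 0$, with a witness in $F_\varphi(M)$ producing the factorization;
\item[(b)] $F_\varphi$ is additive, so $F_\varphi(M_1\oplus\dots\oplus M_n)=\bigoplus F_\varphi(M_i)$.
\end{itemize}
Concretely, I expect $\varphi\in\langle M\rangle$ to be equivalent to the existence of an element of $\Hom_A(X,M)$ whose image under the pairing with $DY\otimes_A M$ recovers $\varphi$. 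This is a pp-type condition, and pp-type conditions are exactly what fp functors encode.

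With $F_\varphi$ in hand, the argument runs as follows. Suppose $\varphi$ factors as $X\to\prod_{i\in I} M_i\to Y$ with each $M_i\in\bar U$. Since $X$ is finitely presented and $Y$ has finite length, the relevant pp-condition involves only finitely many parameters. I would use this to replace $\prod M_i$ by a finite subproduct, or more robustly use that $\prod M_i$ is an elementary/pure extension of the direct sum and apply Ziegler's description of closure. The closure statement needed is: for a finitely presented $G$, $G(M)\neq 0$ for some $M\in\bar U$ implies $G(M')\neq 0$ for some $M'\in U$. This is just the definition of the Ziegler topology, whose basic open sets are $\{M: G(M)\neq0\}$. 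It reduces factorization through modules in $\bar U$ to factorization through modules in $U$, and finiteness of the witness gives a finite direct sum. The final assertion then follows: the argument shows $\langle U\rangle=\mathcal{J}_U=\langle\bar U\rangle$, and $\langle\bar U\rangle$ is fp-idempotent by \Cref{krau}.

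The main obstacle is the step that requires a single finitely presented functor, rather than a conjunction of conditions, to detect ``$\varphi$ factors through a finite sum of copies of members of $U$''. Factoring through a product is not obviously a ``finite'' condition, since one cannot sum infinitely many maps into $Y$. I would first try to exploit that $Y$ is of finite length, hence endofinite and $\Sigma$-pure-injective, so that a map $\prod M_i\to Y$ restricted along $X\to\prod M_i$ depends on only finitely many coordinates up to the relevant pp-definable subgroup. If that fails, I would instead cite Krause's description of $\langle V\rangle$ for closed $V$ as the morphisms annihilated by all fp functors vanishing on $V$. Then it suffices to show that if $\varphi\notin\mathcal{J}_U$, one can construct a finitely presented functor vanishing on $U$ but not on $\varphi$, using the quotient of $\Hom_A(X,-)$ by the subfunctor generated by maps through finite sums of $U$.
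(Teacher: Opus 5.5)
\textbf{There is a genuine gap.} Your easy inclusions $\mathcal{J}_U\subseteq\langle U\rangle\subseteq\langle\bar U\rangle$ and the final appeal to Krause's correspondence are fine. The mechanism you propose for $\langle\bar U\rangle\subseteq\mathcal{J}_U$, however, cannot work.

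To run your argument you need the converse of (a): that $\varphi$ factoring through $M$ forces $F_\varphi(M)\neq 0$. No functor satisfies both directions together with (b). Take $A=k\times k$ and $\varphi=1_A$, which factors through $S_1\oplus S_2$. You would get $F_\varphi(S_1)\oplus F_\varphi(S_2)\neq 0$, so $1_A$ would factor through $S_1$ or through $S_2$, which is false. Put differently, whether $\varphi\in\mathcal{J}_U$ is not decided by $U$ meeting a single basic open set. So the ``conjunction of conditions'' you regard as the main obstacle is unavoidable. It is also harmless, since each condition on its own is invariant under closure.

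Two further steps do not hold as stated. The step ``finitely many parameters lets one pass to a finite subproduct'' is unsupported. Those parameters constrain $X\to\prod M_i$, not the map $\prod M_i\to Y$, which may even vanish on $\bigoplus M_i$. Your fallback functor $Q=\Hom_A(X,-)/\mathcal{J}_U(X,-)$ fails in another way. Nothing makes it finitely presented, and its extension $\vec Q$ to $\Mod A$ need not vanish on infinite-length points of $U$. For the Kronecker algebra with $U=\{G\}$ and $X$ indecomposable projective: $\langle G\rangle$ contains no nonzero map into a preprojective, and $G$ is a directed union of preprojectives, so $\vec Q(G)=\Hom_A(X,G)\neq 0$.

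\textbf{The missing idea} is to use the duality you mention to produce a \emph{family} of finitely presented functors. The family is indexed by the finite length $k$-module $DY\otimes_A X$, where $k$ is the centre of $A$. For $t\in DY\otimes_A X$, let $H_t$ be the image of $\Hom_A(X,-)\to DY\otimes_A-$, $a\mapsto (DY\otimes a)(t)$. It is finitely presented and commutes with filtered colimits and products.

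For every $N\in\Mod A$ we have $\Hom_A(N,Y)\cong D(DY\otimes_A N)$. Hence the $k$-submodule $\mathcal{J}_N(X,Y)$ of maps factoring through some $N^n$ has annihilator $\{t : H_t(N)=0\}$ in $DY\otimes_A X$. It follows that $\mathcal{J}_U(X,Y)^\perp$ consists of those $t$ for which $H_t$ vanishes on $U$, equivalently on $\bar U$ by the definition of the Ziegler topology. This set is contained in $\mathcal{J}_M(X,Y)^\perp$ for every product $M=\prod M_i$ of modules in $\bar U$, because $H_t(\prod M_i)=\prod H_t(M_i)$. Taking annihilators again in the finite length module $DY\otimes_A X$ gives $\langle\bar U\rangle(X,Y)\subseteq\mathcal{J}_U(X,Y)$.

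Equivalently, $\varphi\in\mathcal{J}_U$ if and only if every open set $\{N : H_t(N)\neq 0\}$ with $\langle\varphi,t\rangle\neq 0$ meets $U$. Different $t$ may be witnessed by different points of $U$, and this is where the finite direct sum comes from. The same construction repairs your fallback. For $\varphi\notin\mathcal{J}_U$, pick $t\in\mathcal{J}_U(X,Y)^\perp$ with $\langle\varphi,t\rangle\neq 0$. Then $H_t$ vanishes on $U$ and $H_t(\varphi)(t)=(DY\otimes\varphi)(t)\neq 0$.

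For comparison, the paper gives no argument for this statement and simply quotes Prest.
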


We will express $\tau^-$-rigidity of subsets $U \subseteq \Ind A$ in terms of the corresponding fp-idempotent ideal $\mathcal{I} = \langle U \rangle$. We define $\tau \mathcal{I} = \langle \tau U\rangle$ and $\tau^- \mathcal{I} = \langle \tau^- {U}\rangle$. These expressions are independent of the choice of $U$ by \Cref{pres} since $\tau$ is a homeomorphism, see \Cref{homeo}. Set 
\begin{align*}
\cogen \mathcal{I} &= \{X \in \mod A \mid \text{there is  a mono }X\to Y\text{ in }\mathcal{I}\} \\
&= \{X\in \mod A \mid \Hom_A(X,DA) = \mathcal{I}(X,DA)\}.
\end{align*}
The following notions are variants of \Cref{cotautilting} on the level of ideals.  
\begin{df}\label{id}\rm Let $\mathcal{I}$ be an ideal of $\mod A$.
\begin{itemize}
    \item[(1)] The ideal $\mathcal{I}$ is $\tau^-$-\emph{rigid} if $\mathcal{I}$ is fp-idempotent and $\mathcal{I}\circ \tau^- \mathcal{I} = 0$.
    \item[(2)] We call $(\mathcal{I}, V)$ a $\tau^-$-\emph{rigid} pair if $\mathcal{I}$ is $\tau^-$-rigid and $V\subseteq \Inj A$ fulfills $\mathcal{I}(X,V) = 0$ for all $X\in \mod A$. There is a partial order on $\tau^-$-rigid pairs given by $(\mathcal{I}, V) \leq (\mathcal{I}', V')$ if $\mathcal{I} \subseteq \mathcal{I}'$ and $V\subseteq V'$.
    \item[(3)] The ideal $\mathcal{I}$ is \emph{support $\tau^-$-tilting} if there is a maximal $\tau^-$-rigid pair $(\mathcal{I}, V)$.
\end{itemize}
\end{df}

\begin{prop}\label{core} Let $U \subseteq \Ind A, V \subseteq \Inj A$ and $\mathcal{I} = \langle U\rangle $. Then $(U,V)$ is $\tau^-$-rigid if and only if $(\mathcal{I}, V)$ is $\tau^-$-rigid and in this case $\cogen \mathcal{I} = \cogen U$ is a torsion-free class. 
\end{prop}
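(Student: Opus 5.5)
The plan is to translate each of the two conditions defining a $\tau^-$-rigid pair separately between $U$ and $\mathcal{I}=\langle U\rangle$. By \Cref{pres}, $\mathcal{I}=\langle \bar U\rangle$ is always fp-idempotent. Moreover, $\tau^-\mathcal{I}=\langle \tau^- U\rangle$ consists of morphisms factoring through finite direct sums of modules in $\tau^- U$, and $\mathcal{I}$ consists of morphisms factoring through finite direct sums of modules in $U$. Hence everything reduces to comparing morphisms between finite length modules with the finite-image Hom-condition $\Hom_A^\mathrm{fin}(\tau^- U,U)=0$.

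\textbf{Rigidity.} First suppose $\Hom^\mathrm{fin}_A(\tau^-U,U)=0$. Take $\varphi\in\tau^-\mathcal{I}$ and $\psi\in \mathcal{I}$ composable, say $\varphi\colon X\to Y$ and $\psi\colon Y\to Z$. Write $\varphi = b a$ with $a\colon X\to \bigoplus_i \tau^- Y_i$ and $b$ out of that sum, and $\psi = d c$ with $c\colon Y\to\bigoplus_j U_j$ and $d$ out of that sum, where $Y_i,U_j\in U$. Then $c b\colon \bigoplus \tau^- Y_i\to \bigoplus U_j$ factors through the finite length module $Y$, so each component $\tau^- Y_i\to U_j$ has finite length image and is therefore zero. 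Thus $\psi\varphi=0$.

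Conversely, suppose $\mathcal{I}\circ\tau^-\mathcal{I}=0$ and let $f\colon\tau^- Y\to X$ with $X,Y\in U$ have finite length image $Z$. Write $\tau^-Y=\varinjlim W_k$ over its finite length submodules. If $f\neq0$, then some restriction $f|_{W_k}\colon W_k\to Z$ is nonzero. This restriction factors as $W_k\hookrightarrow\tau^-Y\to Z$, so it lies in $\langle\tau^-U\rangle$; here I use that a single module counts as a product. The inclusion $Z\hookrightarrow X$ composed with any map into $X$ is not yet in $\mathcal{I}$, because $X$ need not have finite length. So I take a monomorphism $Z\to \prod X$, which is just $Z\hookrightarrow X$, and note that the identity of $Z$ is not in $\mathcal{I}$ either. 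The fix is to compose further: choose a finite length module $Z'$ and a map $X\to Z'$ that is nonzero on the image of $f|_{W_k}$. This exists because $X$ is a submodule of a product of finite length modules; for an Artin algebra one can embed $X$ into a product of copies of $DA$-type finite length modules, or use that $\Hom(-,DA)$ separates points. Then $W_k\to Z\to X\to Z'$ is a composite of a map in $\tau^-\mathcal{I}$, namely $W_k\to Z$ viewed through $\tau^-Y$, and a map $Z\to X\to Z'$ in $\mathcal{I}$. It is nonzero, which is a contradiction. The main obstacle is this bookkeeping, namely realizing both factors as morphisms in $\mod A$ lying in the respective ideals; the decomposition through $Z$ handles it.

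\textbf{The $V$-condition.} I need $\Hom_A(U,V)=0$ if and only if $\mathcal{I}(X,V)=0$ for all $X\in\mod A$. If $\Hom_A(U,V)=0$, then any morphism $X\to\bigoplus U_j\to V$ vanishes. Conversely, a nonzero $g\colon X'\to V$ with $X'\in U$ restricts to a nonzero map on some finite length submodule $W\subseteq X'$, and this restriction lies in $\mathcal{I}(W,V)$.

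\textbf{Torsion-free class.} Once $(U,V)$ is $\tau^-$-rigid, $U$ is cogen-rigid by \Cref{rechar}, so $\cogen U$ is a torsion-free class by \Cref{keylem}. For the equality $\cogen\mathcal{I}=\cogen U$, first take $X\in\cogen U$ with a mono $X\to\prod U_j$. Since $\mod A$ is a length category, a finite subproduct already gives a mono $X\to\bigoplus_{\text{fin}}U_j$ (see \Cref{length}(3)). Then, as above, pass to a finite length submodule $Y$ containing the image to get a mono $X\to Y$ lying in $\mathcal{I}$. Conversely, a mono $X\to Y$ in $\mathcal{I}$ factors through some $\bigoplus U_j$, so $X$ embeds into it.
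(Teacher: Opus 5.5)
Your rigidity equivalence and your treatment of the $V$-condition are correct and essentially follow the paper. For the forward direction the paper also factors through finite direct sums via \Cref{pres}. For the converse it precomposes with an epimorphism $\bigoplus A\to\tau^-Y$ and postcomposes with components of a monomorphism $X\to\prod DA$. That is the same device as your finite length submodule $W_k$ and your map $X\to Z'$; the detour through $Z\to\prod X$ and the identity of $Z$ can simply be deleted.

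\textbf{The gap is in the inclusion $\cogen U\subseteq\cogen\mathcal{I}$.} Take a monomorphism $X\to\bigoplus_j U_j$ and corestrict it to a finite length submodule $Y\subseteq\bigoplus_j U_j$ containing the image. This gives a monomorphism $X\to Y$ in $\mod A$, but nothing puts it in $\mathcal{I}$. Membership requires a factorization $X\to\prod U_j\to Y$, hence a map out of the product into $Y$. Since $Y$ is only a submodule and not a retract, no such map need exist. Restricting to finite length submodules works for sources, as in your $V$-argument, but not for targets.

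The step genuinely fails. Take the Kronecker algebra and $U=\{G\}$ as in \Cref{kro}. Then $P_1\in\cogen G=\mathrm{add}\,\mathcal{P}$, and every finite length submodule $Y$ of $G^n$ is preprojective. Every morphism in $\langle G\rangle$ factors through $\mathrm{add}\,\mathcal{Q}$, and $\Hom_A(\mathcal{Q},\mathcal{P})=0$. So $\langle G\rangle(P_1,Y)=0$, and your construction never yields a monomorphism in $\mathcal{I}$.

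The missing ingredient is an injective target. The injective envelope $X\to Q$ is a monomorphism in $\mod A$. By injectivity of $Q$ it extends along the monomorphism $X\to\prod U_j$, so it factors through a product of modules in $U$ and lies in $\mathcal{I}$; this is the paper's argument. Alternatively, in the spirit of your rigidity step, embed $\bigoplus_j U_j$ into $\prod DA$. Then use that $X$ has finite length to find finitely many components making $X\to(DA)^n$ a monomorphism. This map factors through $\bigoplus_j U_j$ and hence lies in $\mathcal{I}$.
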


\begin{proof} First, we show that $U$ is $\tau^-$-rigid if and only if the ideal $\mathcal{I}$ is $\tau^-$-rigid.  By \Cref{pres} every morphism in $\mathcal{I} \circ \tau^- \mathcal{I}$ factors as 
\begin{align*}
    X\longrightarrow \bigoplus_{i=1}^n \tau^{-} Y_i \longrightarrow X' \longrightarrow \bigoplus_{j=1}^m  Y_j' \longrightarrow X''     
\end{align*}
with $X,X',X'' \in \mod A$ and $Y_i, Y_j' \in U$. If $U$ is $\tau^-$-rigid, then  the composition $\bigoplus \tau^- Y_i \to X' \to \bigoplus Y_j'$ is zero as $\Hom_A^\mathrm{fin}(\tau^- U, U) = 0$. Thus $\mathcal{I} \circ \tau^- \mathcal{I} = 0$.

For $X, Y \in U$ and a morphism $\tau^-Y \to X$ with finite length image $Z$ consider an epimorphism $\bigoplus A \to \tau^- Y$ and a monomorphism $X \to \prod DA$. If the ideal $\mathcal{I}$ is $\tau^-$-rigid then each component $A \to DA$ of the composition
\begin{align*}
    \bigoplus A \longrightarrow \tau^- Y \longrightarrow Z \longrightarrow X\longrightarrow \prod DA
\end{align*}
is zero as $\mathcal{I}\circ \tau^- \mathcal{I} = 0$. Thus $U$ is $\tau^-$-rigid.

Next, we show that $\Hom_A(U, Q) = 0$ if and only if $\mathcal{I}(-,Q) = 0$ for $Q\in \Inj A$. The only if part is trivial. Suppose that $\mathcal{I}(-,Q)= 0$. For $X\in U$ let $\bigoplus A \to X$ be an epimorphism. Then, for $X\to Q$ every component $A\to Q$ of the composition $\bigoplus A \to X \to Q$ is contained in $\mathcal{I}(A,Q) = 0$. Thus $X\to Q$ equals zero and $\Hom_A(U, Q) = 0$.

Lastly, we prove that $\cogen \mathcal{I}  = \cogen U$. Note that $U$ being $\tau^-$-rigid implies that $U$ is cogen-rigid by \Cref{rechar} and so $\cogen U$ is a torsion-free class, see \Cref{keylem}. The inclusion $\cogen \mathcal{I }\subseteq \cogen U$ is trivial. For $X$ in $\cogen U$ there is a monomorphism $X\to \prod Y$ where the product goes over some $Y\in U$. The injective hull $f\colon X\to Q$ factors through $X\to \prod Y$ and so $f\in \mathcal{I}$. Hence $X\in \cogen  \mathcal{I}$ and $\cogen \mathcal{I} = \cogen U$.
\end{proof}

\begin{coro}\label{mainideal} There is a one-to-one correspondence 
\begin{equation*}
    \begin{Bmatrix}
        \text{torsion-free classes}\\
        \mathcal{F}\subseteq \mod A
    \end{Bmatrix} \longleftrightarrow 
    \begin{Bmatrix}
        \text{support $\tau^-$-tilting ideals}\\
        \mathcal{I}\text{ of } \mod A
    \end{Bmatrix}
\end{equation*}
given by $\mathcal{F} \mapsto \langle \Inj \bar{\mathcal{F}} \rangle$ and $\mathcal{I} \mapsto \cogen \mathcal{I}$. 
\end{coro}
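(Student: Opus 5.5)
The plan is to transport the bijection of \Cref{taumain} along the correspondence of \Cref{krau} between closed subsets of $\Ind A$ and fp-idempotent ideals. Concretely, I want to show that $U \mapsto \langle U\rangle$ restricts to a bijection from support $\tau^-$-tilting subsets onto support $\tau^-$-tilting ideals, and that it commutes with taking $\cogen$. Since $\mathcal{F}\mapsto \Inj\bar{\mathcal{F}}$ and $U\mapsto \cogen U$ are mutually inverse by \Cref{taumain}, composing gives the stated maps $\mathcal{F}\mapsto \langle \Inj\bar{\mathcal{F}}\rangle$ and $\mathcal{I}\mapsto \cogen\mathcal{I}$. The compatibility with $\cogen$ is the last assertion of \Cref{core}: for a $\tau^-$-rigid $U$ we have $\cogen\langle U\rangle=\cogen U$.

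The main step is an order-isomorphism between $\tau^-$-rigid pairs $(U,V)$ with $U$ \emph{closed} and $\tau^-$-rigid pairs $(\mathcal{I},V)$ of ideals, given by $(U,V)\mapsto(\langle U\rangle,V)$.
\begin{itemize}
\item \emph{Bijection.} On the first coordinate this is a bijection by \Cref{krau}, since $\tau^-$-rigid ideals are fp-idempotent by definition. That the $\tau^-$-rigid pairs correspond exactly is \Cref{core}.
\item \emph{Order preservation.} $U\subseteq U'$ clearly gives $\langle U\rangle\subseteq\langle U'\rangle$. Conversely, suppose $U,U'$ are closed with $\langle U\rangle\subseteq \langle U'\rangle$. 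Then $\langle U\cup U'\rangle=\langle U'\rangle$, because by \Cref{pres} the ideal of a union is generated by morphisms factoring through finite sums. Since $U\cup U'$ is closed, injectivity in \Cref{krau} gives $U\cup U'=U'$, that is $U\subseteq U'$.
\end{itemize}

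Next I have to handle non-closed subsets. A support $\tau^-$-tilting subset is closed by \Cref{taumain}. Conversely, I need a maximal pair among closed $U$ to be maximal among all $U$. If $(U,V)\le(U',V')$ with $U'$ arbitrary, I replace $U'$ by its closure $\bar U'$. The pair $(\bar U',V')$ is still $\tau^-$-rigid by \Cref{core}, since $\langle U'\rangle=\langle\bar U'\rangle$ by \Cref{pres}. Maximality among closed sets then gives $\bar U'=U$, hence $U'=U$. So maximal $\tau^-$-rigid pairs $(U,V)$ correspond exactly to maximal pairs $(\langle U\rangle,V)$. Thus $U$ is support $\tau^-$-tilting if and only if $\langle U\rangle$ is a support $\tau^-$-tilting ideal, and every support $\tau^-$-tilting ideal arises this way.

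Finally I compose the two bijections. A torsion-free class $\mathcal{F}$ goes to $\Inj\bar{\mathcal{F}}$ and then to $\langle\Inj\bar{\mathcal{F}}\rangle$. In the other direction, $\mathcal{I}=\langle U\rangle$ goes to $\cogen\mathcal{I}=\cogen U$, which recovers $\mathcal{F}$ by \Cref{taumain}.

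The only delicate point is the closure argument in the third paragraph. It relies on $\tau^-$-rigidity of a pair depending only on the ideal $\langle U\rangle$, which is what \Cref{core} provides. Everything else is bookkeeping.
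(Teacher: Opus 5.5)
Your proposal is correct and takes essentially the same route as the paper. The paper transports \Cref{taumain} along $U\mapsto\langle U\rangle$, using \Cref{core} both to match $\tau^-$-rigid pairs and to get $\cogen\langle U\rangle=\cogen U$. You also spell out two steps the paper leaves implicit, and both are sound: Krause's correspondence \Cref{krau} reflects inclusions between closed sets, and maximality among closed subsets agrees with maximality among all subsets because $\langle U'\rangle=\langle\bar U'\rangle$.
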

\begin{proof} The assignment $U \mapsto \langle U \rangle$ induces a one-to-one correspondence between support $\tau^-$-tilting subsets $U \subseteq \Ind A$ and support $\tau^-$-tilting ideals $\mathcal{I}$ of $\mod A$ such that $\cogen \mathcal{I} = \cogen U$ by \Cref{core}. Combining this correspondence with the one in 
\Cref{taumain} shows the desired result.
\end{proof}

\begin{exa}\label{kro}\rm Consider the Kronecker algebra
\begin{equation*}
    \begin{tikzcd}
    A =k\,(\bullet \arrow[r, shift left] \arrow[r, shift right] & \bullet)
\end{tikzcd}
\end{equation*}
over an algebraically closed field $k$. The indecomposable modules in $\mod A$ can be divided into three parts: The preprojective modules $\mathcal{P} = \{P_1, P_2, \dots \}$, the preinjective modules $\mathcal{Q} = \{Q_1, Q_2, \dots \}$ and the regular modules  $\mathcal{R}$, which further divide into tubes $\mathcal{R}^\lambda = \{R^\lambda_1, R^\lambda_2, \dots\}$ with $\lambda\in k\cup \{\infty \}$. The Auslander-Reiten quiver of $\mod A$ can be visualized as follows.
\tikzstyle{place}=[circle,draw=black!50,fill=black!100,thick,
inner sep=0pt,minimum size=1mm]
\begin{align*}
\begin{tikzpicture}
\draw (1, -0.5) node{$\mathcal{P}$};
\draw (9, -0.5) node{$\mathcal{Q}$};
\draw (4.5, -0.5) node{$\mathcal{R}$};
\draw (0,0) node[place]{};
\draw [-stealth](0.1,0.15) -- (0.35,0.4);
\draw [-stealth](0.15,0.1) -- (0.4,0.35);
\draw (0.5,0.5) node[place]{};
\draw [-stealth](0.65,0.4) -- (0.9,0.15);
\draw [-stealth](0.6,0.35) -- (0.85,0.1);
\draw (1,0) node[place]{};
\draw [-stealth](1.1,0.15) -- (1.35,0.4);
\draw [-stealth](1.15,0.1) -- (1.4,0.35);
\draw (1.5,0.5) node[place]{};
\draw [-stealth](1.65,0.4) -- (1.9,0.15);
\draw [-stealth](1.6,0.35) -- (1.85,0.1);
\draw (2,0) node[place]{};
\draw (2.5,0) node{$\dots$};
\draw (10,0) node[place]{};
\draw [-stealth](9.1,0.15) -- (9.35,0.4);
\draw [-stealth](9.15,0.1) -- (9.4,0.35);
\draw (9.5,0.5) node[place]{};
\draw [-stealth](9.65,0.4) -- (9.9,0.15);
\draw [-stealth](9.6,0.35) -- (9.85,0.1);
\draw (9,0) node[place]{};
\draw [-stealth](8.65,0.4) -- (8.9,0.15);
\draw [-stealth](8.6,0.35) -- (8.85,0.1);
\draw (8.5,0.5) node[place]{};
\draw [-stealth](8.1,0.15) -- (8.35,0.4);
\draw [-stealth](8.15,0.1) -- (8.4,0.35);
\draw (8,0) node[place]{};
\draw (7.5,0) node{$\dots$};
\draw (3.5,0) circle (5pt);
\draw (3.33,0)--(3.33,1);
\draw (3.33,1.3) node{$\vdots$};
\draw (3.67,0)--(3.67,1);
\draw (3.67,1.3) node{$\vdots$};
\draw (4.25,0) circle (5pt);
\draw (4.08,0)--(4.08,1);
\draw (4.08,1.3) node{$\vdots$};
\draw (4.42,0)--(4.42,1);
\draw (4.42,1.3) node{$\vdots$};
\draw (5,0) circle (5pt);
\draw (4.83,0)--(4.83,1);
\draw (4.83,1.3) node{$\vdots$};
\draw (5.17,0)--(5.17,1);
\draw (5.17,1.3) node{$\vdots$};
\draw (5.75,0) node{$\dots$};
\draw (6.3,0) node{$\dots$};
\end{tikzpicture}
\end{align*}

For all $\lambda$ the Pr\"ufer module $R_\infty^\lambda$ equals a filtered colimit of monomorphisms in $\mathcal{R}^\lambda$ and the adic module $\hat{R}^\lambda$ is given by an inverse limit of epimorphisms in $\mathcal{R}^\lambda$.\linebreak The Ziegler spectrum $\Ind A$ consists of all indecomposable modules in $\mod A$, the Pr\"ufer modules $R_\infty^\lambda$, the adic modules $\hat{R}^\lambda$, and the unique generic module $G$, see for example \cite[Theorem 14.2.15]{Krause4}. Moreover, a subset $U \subseteq \Ind A$ is closed if and only if the following holds. 
\begin{itemize}
    \item[(1)] If $U$ contains infinitely many modules in $\mathcal{R}_\lambda$ then $U$ contains $R_\infty^\lambda$ and $\hat{R}^\lambda.$
    \item[(2)] If $U$ contains infinitely many modules in $\mathcal{P}$ then $U$ contains $\hat{R}^\lambda$ for all $\lambda$.
    \item[(3)] If $U$ contains infinitely many modules in $\mathcal{Q}$ then $U$ contains ${R}^\lambda_\infty$ for all $\lambda$.
    \item[(4)] If $U$ contains infinitely many modules or a module of infinite length, then $U$ contains $G$. 
\end{itemize}
From the topology of the Ziegler spectrum and the behavior of $\tau$ on finite length modules, it follows that $\tau X \cong X$ for every infinite length module $X\in \Ind A$ since $\tau \colon \Ind A \setminus \{P_1, P_2\} \to \Ind A \setminus \{Q_1, Q_2\}$ is a homeomorphism, see \Cref{homeo}. This will be used to determine all $\tau^-$-rigid subsets $U \subseteq \Ind A$ that consist of infinite length modules only. For our calculations, we apply fp-idempotent ideals and make use of the fact that there are only zero morphisms from the right to the left in the displayed Auslander-Reiten quiver.

The fp-idempotent ideal $\langle G\rangle$ is generated by all morphisms from  $\mathcal{P}$ to $\mathcal{Q}$.  For $\lambda \in k\cup \{\infty\}$ the fp-idempotent ideal $\langle R^\lambda_\infty \rangle$ is generated by all morphisms from $\mathcal{R}^\lambda$ to $\mathcal{Q}$, and $\langle \hat{R}^\lambda\rangle$ is generated by all morphisms  from $\mathcal{P}$ to  $\mathcal{R}^\lambda$. The only non-zero compositions, using these ideals, are given by $\langle R_\infty ^\lambda \rangle \circ \langle \hat{R}^\lambda \rangle \neq 0$. Since $\tau X \cong X$ for infinite length modules $X\in \Ind A$ it follows from \Cref{core} that a subset $U \subseteq \Ind A \setminus \mod A$ is $\tau^-$-rigid if and only if $R_\infty^\lambda \in U$ implies $\hat{R}^\lambda \notin U$. Assume that  $U$ is such a $\tau^-$-rigid non-empty set. Then the torsion-free class $\mathcal{F}= \cogen U$ associated to $U$ is given by the additive closure of $\mathcal{P}$ as well as all $ \mathcal{R}^\lambda$ with ${R}_\infty^\lambda \in U$. Now $U$ is support $\tau^-$-tilting if and only if 
    \begin{align*}
        U = \{R_\infty^\lambda, \hat{R}^\mu , G\mid \lambda\in S, \mu \in T\}
    \end{align*}
    with $S\dot{\cup } T = k \cup \{\infty\}$. The corresponding support $\tau^-$-tilting ideal $\langle U \rangle$ is generated by all morphisms from $\mathcal{R}^\lambda$ to $\mathcal{Q}$, and $\mathcal{P}$ to $\mathcal{R}^\mu$ with $\lambda \in S$ and $\mu \in T$.
\end{exa}

\section{The dual perspective}

Let $A$ be an Artin algebra. The aim of this section is to present dual counterparts of the notions and results developed in the previous section. The difficulty lies in dealing with infinite length $A$-modules for which duality is not well behaved. We overcome this by working on the level of ideals first. The following definition is dual to \Cref{id}.

\begin{df}\label{id2}\rm Let $\mathcal{I}$ be an ideal of $\mod A$.
\begin{itemize}
    \item[(1)] The ideal $\mathcal{I}$ is $\tau$-\emph{rigid} if $\mathcal{I}$ is fp-idempotent and $\tau \mathcal{I}\circ \ \mathcal{I} = 0$.
    \item[(2)] We call $(\mathcal{I}, V)$ a $\tau$-\emph{rigid} pair if $\mathcal{I}$ is $\tau$-rigid and $V\subseteq \mathrm{Proj}\, A$ fulfills\linebreak $\mathcal{I}(V,X) = 0$ for all $X\in \mod A$. There is a partial order on $\tau$-rigid pairs given by $(\mathcal{I}, V) \leq (\mathcal{I}', V')$ if $\mathcal{I} \subseteq \mathcal{I}'$ and $V\subseteq V'$.
    \item[(3)] The ideal $\mathcal{I}$ is \emph{support $\tau$-tilting} if there is a maximal $\tau$-rigid pair $(\mathcal{I}, V)$.
\end{itemize}
\end{df}

One issue is that these notions still make reference to infinite length $A$-modules, namely through the expression $\tau\mathcal{I}$. We will show that $\tau\mathcal{I}$ can instead be described using the Auslander-Reiten translation on morphisms. For an ideal $\mathcal{I}$ of $\mathrm{mod}\,A$ let $\underline{\mathcal{I}}$ and $\overline{\mathcal{I}}$ be the induced ideals in $\underline{\mathrm{mod}}\,A$ and $\overline{\mathrm{mod}}\,A$ respectively. Applying $\tau$ and $\tau^-$ on morphisms naturally yields an ideal $\tau \underline{\mathcal{I}}$ of $\overline{\mathrm{mod}}\,A$ and an ideal $\tau^- \overline{ \mathcal{I}}$ of $\underline{\mathrm{mod}}\,A$.
The following relates $\tau \mathcal{I}$ and $\tau \underline{\mathcal{I}}$ as well as $\tau^- \mathcal{I}$ and $\tau^- \overline{\mathcal{I}}$.

\begin{lem}\label{finite} Let $\mathcal{I}$ be an fp-idempotent ideal of $\mod A$.
\begin{itemize}
    \item[(1)] The ideal $\tau \mathcal{I}$ equals the unique fp-idempotent ideal $\mathcal{J}$ of $\mod A$ containing no $1_Q$ with $Q\in \Inj A$ such that $\tau \underline{\mathcal{I}} = \overline{\mathcal{J}}$.
    \item[(2)] The ideal $\tau^- \mathcal{I}$ equals the unique fp-idempotent ideal $\mathcal{J}$ of $\mod A$ containing no $1_P$ with $P\in \mathrm{Proj}\,A$ such that $\tau^- \overline{\mathcal{I}} = \underline{\mathcal{J}}$.
\end{itemize}
\end{lem}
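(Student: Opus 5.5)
The plan is to prove (1) and obtain (2) by the symmetric argument, with $\tau^-$ in place of $\tau$ and projectives in place of injectives.

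By \Cref{krau} we write $\mathcal{I} = \langle U\rangle$ for a closed set $U \subseteq \Ind A$. Then $\tau\mathcal{I} = \langle \tau U\rangle$, where $\tau U$ means $\tau(U\setminus \mathrm{Proj}\,A)$. By \Cref{homeo}, $\tau U$ is closed in $\Ind A\setminus \Inj A$. Since indecomposable finite length modules are isolated points of $\Ind A$, the set $\Inj A$ is open, so $\tau U$ is also closed in $\Ind A$.

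The proof then has three steps.

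\textbf{Step 1: $\tau\mathcal{I}$ is fp-idempotent and contains no $1_Q$.} Fp-idempotence is \Cref{pres}. Suppose $1_Q \in \langle \tau U\rangle$ for some $Q \in \Inj A$. By \Cref{pres}, $1_Q$ factors through a finite direct sum of modules in $\tau U$. Since $Q$ and these modules have local endomorphism rings, Krull--Schmidt forces $Q \in \tau U$, a contradiction.

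\textbf{Step 2: $\tau\underline{\mathcal{I}} = \overline{\tau\mathcal{I}}$.} Take $f\colon X\to Y$ in $\mathcal{I}$. By \Cref{pres} it factors as $X \to \bigoplus_{i=1}^n Y_i \to Y$ with $Y_i \in U$. Apply Krause's functor $\tau\colon \underline{\mathrm{Mod}}\,A \to \overline{\mathrm{Mod}}\,A$, which restricts to the classical $\tau$ on $\mod A$ and preserves finite sums. Then $\tau f$ factors in $\overline{\mathrm{Mod}}\,A$ through $\bigoplus \tau Y_i$, with $\tau Y_i \in \tau U$ or $\tau Y_i = 0$ when $Y_i$ is projective. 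Hence $\tau f$ is congruent, modulo morphisms factoring through injectives, to a genuine morphism of $\mod A$ factoring through $\bigoplus \tau Y_i$. So $\tau\underline{\mathcal{I}} \subseteq \overline{\tau\mathcal{I}}$.

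For the converse, run the same argument with $\tau^-$ and use $\tau^-\tau \cong 1$ on $\overline{\mathrm{Mod}}\,A$. Each $g \in \langle\tau U\rangle$ factors through $\bigoplus \tau Y_i$, so $\tau^- g$ lies in $\underline{\mathcal{I}}$ and $g = \tau\tau^- g$ lies in $\tau\underline{\mathcal{I}}$ stably. One should check that the factorisation through the infinite length module $\bigoplus\tau Y_i$ descends correctly to the stable categories, since Krause's equivalence is stated on all of $\Mod A$.

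\textbf{Step 3: uniqueness.} This is the step I expect to be the main obstacle. Let $\mathcal{J} = \langle W\rangle$ with $W$ closed and containing no $1_Q$ for $Q\in\Inj A$. By the Krull--Schmidt argument of Step 1, $W \cap \Inj A = \emptyset$. So it suffices to show that a closed set $W \subseteq \Ind A\setminus\Inj A$ is determined by $\overline{\langle W\rangle}$.

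I would first try the following. Points of $W$ are detected by finitely presented functors $F\colon \mod A\to \Ab$, extended to $\Mod A$, via the basic open sets $\{Z : F(Z)\neq 0\}$. On the open subspace $\Ind A \setminus \Inj A$ it should suffice to use functors with $F(DA)=0$, i.e. those factoring through $\overline{\mathrm{mod}}\,A$. I expect this because such functors, for instance the ones obtained as $\overline{\Hom}_A(C,-)$ via the Auslander--Reiten formula, still give a basis there: the isolated injective points can be cut away.

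Granting this, the identification of \cite{Krause} between closed sets and Serre subcategories of functors vanishing on the ideal gives the description
\begin{align*}
W = \{Z\in \Ind A\setminus \Inj A \mid F(Z) = 0 \text{ for all such } F \text{ with } F(\mathcal{J}) = 0\}.
\end{align*}
For functors vanishing on injectives, the condition $F(\mathcal{J})=0$ depends only on $\overline{\mathcal{J}}$. Hence $W$, and therefore $\mathcal{J}$, is determined by $\overline{\mathcal{J}}$. Together with Steps 1 and 2, this shows that $\tau\mathcal{I}$ is the unique such ideal.

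The delicate point is justifying that functors vanishing on $DA$ suffice to separate and define closed sets away from $\Inj A$. If this fails, the fallback is to apply $\tau^-$ to reduce to $\underline{\mathrm{mod}}\,A$ and compare the closed sets $\tau^- W$ and $\tau^- W'$ there.
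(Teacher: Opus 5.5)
Your Steps 1 and 2 are correct and agree with the paper, which compresses them to one line each: $\tau\colon \underline{\mathrm{Mod}}\,A\to\overline{\mathrm{Mod}}\,A$ is an equivalence, and $Q\notin\tau U$ forces $1_Q\notin\tau\mathcal{I}$. The check you raise at the end of Step 2 is routine, since a factorisation in a stable category lifts to $\Mod A$ up to a morphism through an injective (resp.\ projective).

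The gap is in Step 3. Uniqueness is the real content of the lemma, and you reduce it to a separation claim that you only grant. Your heuristic for that claim also rests on the wrong property. That the injective points are isolated, i.e.\ open, does not help. What is needed is that $\Inj A$ is \emph{closed} in $\Ind A$. This holds because $\Inj A$ is a finite set of indecomposable finite length modules, and each of these is a closed point.

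With this fact, no functor-theoretic separation is needed; this is how the paper argues.
\begin{itemize}
\item[(a)] The preimage of $\overline{\mathcal{J}}$ in $\mod A$ is $\mathcal{J}+\langle \Inj A\rangle$. Indeed, a morphism of $\mod A$ that factors through an injective already factors through the injective envelope of its source.
\item[(b)] By \Cref{pres}, $\mathcal{J}+\langle \Inj A\rangle = \langle W\cup\Inj A\rangle$, and $W\cup\Inj A$ is closed. Likewise the preimage of $\overline{\tau\mathcal{I}}$ is $\langle \tau U\cup\Inj A\rangle$.
\item[(c)] Hence $\overline{\mathcal{J}}=\overline{\tau\mathcal{I}}$ gives $W\cup\Inj A=\tau U\cup\Inj A$ by \Cref{krau}.
\item[(d)] Both $W$ and $\tau U$ avoid $\Inj A$, so $W=\tau U$, that is, $\mathcal{J}=\tau\mathcal{I}$.
\end{itemize}

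The same closedness fact also proves the claim you left open. Let $Z\in\Ind A\setminus\Inj A$ with $Z\notin W$. Then $Z$ lies outside the closed set $W\cup\Inj A$. So some finitely presented $F$ vanishes on $W\cup\Inj A$, hence on $DA$ and on $\mathcal{J}$, while $F(Z)\neq 0$. Your route can therefore be completed, but the observation that completes it already yields the direct argument above.
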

\begin{proof}  We only show (1) as (2) is similar. Let $\mathcal{I} = \langle U \rangle$ for a closed set $U \subseteq \Ind A$. Since the Auslander-Reiten translation $\tau \colon \underline{\mathrm{Mod}}\, A \to \overline{\mathrm{Mod}}\, A$ is an equivalence by \Cref{homeo} it follows that $\tau \underline{\mathcal{I}} = \overline{\tau \mathcal{I}}$. Moreover $Q\notin \tau U$ implies that $1_Q \notin \tau \mathcal{I}$ for $Q\in \Inj A$. Let $\mathcal{J} = \langle U'\rangle$ have the properties stated with $U'\subseteq \Ind A$ closed. Then $\overline{\mathcal{J}} = \overline{\tau \mathcal{I}}$ implies that $\mathcal{J} + \langle \Inj A \rangle = \tau \mathcal{I}+\langle \Inj A \rangle$ and so $U' \cup \Inj A = \tau U \cup \Inj A$ \mbox{by \Cref{krau}}. By assumption $U' \cap \Inj A = \emptyset$ and thus $U ' = \tau {U}$. We conclude that $\mathcal{J} = \tau \mathcal{I}$.   
\end{proof}

For an ideal $\mathcal{I}$ and an additive subcategory $\mathcal{C}$ of $\mod A$ the duality $D$ naturally induces an ideal $D \mathcal{I}$ and an additive subcategory $D\mathcal{C}$ of $\mod A ^\op$. Let $\gen \mathcal{I}$ be the collection of all $X\in \mod A$ that admit an epimorphism $Y \to X $ in $\mathcal{I}$.

\begin{prop}\label{duality} Let $\mathcal{I}$ be an ideal of $\mod A$ and $V\subseteq \mathrm{Proj}\, A$. Then $(\mathcal{I}, V)$ is $\tau$-rigid if and only if $(D \mathcal{I}, DV)$ is $\tau^-$-rigid. Moreover $\gen \mathcal{I} = D\,\cogen D\mathcal{I}$. 
\end{prop}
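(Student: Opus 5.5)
The plan is to prove the three pieces separately: fp-idempotency is preserved by $D$, the rigidity conditions correspond under $D$, and the conditions on $V$ and on $\gen$ correspond under $D$. Throughout, $D\colon \mod A \to \mod A^{\op}$ is a contravariant equivalence. On the level of ideals it reverses composition, $D(\mathcal{J}\circ\mathcal{I}) = D\mathcal{I}\circ D\mathcal{J}$, and it sends projectives to injectives, epimorphisms to monomorphisms, and $DV \subseteq \Inj A^{\op}$.

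\textbf{Fp-idempotency.} I would pass to functor categories. A finitely presented functor $F\colon \mod A\to \Ab$ vanishing on $\mathcal{I}$ is a cokernel of $\Hom_A(Y,-)\to\Hom_A(X,-)$. The standard duality between finitely presented covariant functors on $\mod A$ and on $\mod A^{\op}$ (Auslander–Gruson–Jensen duality, or equivalently composing with $D$, using $\mod A \simeq (\mod A^{\op})^{\op}$) is exact on extensions. It carries functors annihilated by $\mathcal{I}$ to functors annihilated by $D\mathcal{I}$. A cleaner route, which I would adopt, is to use the known fact (Krause) that the fp-idempotency condition is self-dual: an ideal is fp-idempotent iff the corresponding Serre subcategory of finitely presented functors is closed under extensions, and this translates symmetrically under $F\mapsto$ its dual. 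Hence $\mathcal{I}$ is fp-idempotent iff $D\mathcal{I}$ is. This is the step I expect to need the most care, since the paper so far only defines fp-idempotency via covariant functors.

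\textbf{Rigidity.} Here the key is to show $D(\tau\mathcal{I}) = \tau^-(D\mathcal{I})$ for fp-idempotent $\mathcal{I}$, using \Cref{finite}. On finite length modules, $D\tau \cong \tau^- D$ as functors $\underline{\mathrm{mod}}\,A \to \underline{\mathrm{mod}}\,A^{\op}$; this follows from the classical description $\tau = D\mathrm{Tr}$ and $\tau^- = \mathrm{Tr}D$. Hence $D(\tau\underline{\mathcal{I}}) = \tau^-(\overline{D\mathcal{I}})$, where $D$ interchanges the stable categories. The ideal $D\tau\mathcal{I}$ is fp-idempotent by the first step. Since $\tau\mathcal{I}$ contains no $1_Q$ with $Q$ injective, $D\tau\mathcal{I}$ contains no $1_P$ with $P$ projective. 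Its image in $\underline{\mathrm{mod}}\,A^{\op}$ is $\tau^-\overline{D\mathcal{I}}$. By the uniqueness in \Cref{finite}(2), $D\tau\mathcal{I} = \tau^- D\mathcal{I}$. Therefore
\[
D(\tau\mathcal{I}\circ\mathcal{I}) = D\mathcal{I}\circ \tau^- D\mathcal{I},
\]
and one side vanishes iff the other does.

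\textbf{The condition on $V$ and the equality of classes.} The condition $\mathcal{I}(V,X)=0$ for all $X$ dualizes exactly to $D\mathcal{I}(DX,DV)=0$ for all $DX$, with $DV\subseteq \Inj A^{\op}$. Finally, $X\in\gen\mathcal{I}$ means there is an epimorphism $Y\to X$ lying in $\mathcal{I}$. Dualizing gives a monomorphism $DX\to DY$ lying in $D\mathcal{I}$, i.e.\ $DX\in\cogen D\mathcal{I}$, and the converse is symmetric. So $\gen\mathcal{I} = D\,\cogen D\mathcal{I}$.
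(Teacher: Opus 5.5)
Your proposal is correct and is essentially the paper's proof with the details written out. Like you, the paper takes the self-duality of fp-idempotency as a known fact; it cites the author's earlier work for this rather than Krause. Its remark that $\tau$-rigidity is ``completely dual'' to $\tau^{-}$-rigidity once expressed through \Cref{finite} is exactly your identification $D(\tau\mathcal{I}) = \tau^{-} D\mathcal{I}$, obtained from $D\tau\cong\tau^{-}D$ on $\underline{\mathrm{mod}}\,A$ and the uniqueness in \Cref{finite}(2). The condition on $V$ and the equality $\gen\mathcal{I} = D\,\cogen D\mathcal{I}$ are dualized in the same way. If you want to justify the fp-idempotency step yourself, use $F\mapsto D\circ F\circ D$, dualizing the values as well. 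Composing with $D$ alone only gives contravariant functors on $\mod A^{\op}$.
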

\begin{proof} By  \cite[Remark 4.6 (1)]{self2} the ideal $\mathcal{I}$ is fp-idempotent if and only if $D\mathcal{I}$ is fp-idempotent. Thus, the definition of being \mbox{$\tau$-rigid} is completely dual to being \mbox{$\tau^-$-rigid} and may only be expressed in terms of finite length modules by \Cref{finite}. It follows that $(\mathcal{I}, V)$ is $\tau$-rigid if and only if $(D \mathcal{I}, DV)$ is $\tau^-$-rigid. Moreover, any epimorphism $X\to Y$ in $\mathcal{I}$ corresponds to a monomorphism $DY \to DX$ in $D\mathcal{I}$ and so $\gen \mathcal{I} = D \, \cogen D\mathcal{I}$.
\end{proof}

\begin{coro}\label{mainideal2} There is a one-to-one correspondence 
\begin{equation*}
    \begin{Bmatrix}
        \text{torsion classes}\\
        \mathcal{T}\subseteq \mod A
    \end{Bmatrix} \longleftrightarrow 
    \begin{Bmatrix}
        \text{support $\tau$-tilting ideals}\\
        \mathcal{I}\text{ of } \mod A
    \end{Bmatrix}
\end{equation*}
given by $\mathcal{I} \mapsto \gen \mathcal{I}$.
\end{coro}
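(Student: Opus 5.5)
The plan is to deduce \Cref{mainideal2} from \Cref{mainideal} applied to the opposite algebra $A^\op$, using the standard duality $D\colon \mod A \to \mod A^\op$ to transport everything. Since $A^\op$ is again an Artin algebra, \Cref{mainideal} yields a bijection between torsion-free classes $\mathcal{F}' \subseteq \mod A^\op$ and support $\tau^-$-tilting ideals $\mathcal{I}'$ of $\mod A^\op$, given by $\mathcal{I}' \mapsto \cogen \mathcal{I}'$. The aim is to conjugate this bijection with $D$ on both sides.

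On the left-hand side, $D$ is a contravariant equivalence that swaps epimorphisms and monomorphisms and preserves short exact sequences, so it reverses the roles in a torsion pair. Hence $\mathcal{T} \mapsto D\mathcal{T}$ is a bijection between torsion classes in $\mod A$ and torsion-free classes in $\mod A^\op$. On the right-hand side, I use \Cref{duality}: a pair $(\mathcal{I},V)$ with $V \subseteq \mathrm{Proj}\,A$ is $\tau$-rigid if and only if $(D\mathcal{I},DV)$ is $\tau^-$-rigid, where $DV \subseteq \Inj A^\op$. Moreover, $D$ induces an order-preserving bijection between ideals of $\mod A$ and ideals of $\mod A^\op$, and between subsets of $\mathrm{Proj}\,A$ and subsets of $\Inj A^\op$. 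Together these show that $D$ gives an order isomorphism between $\tau$-rigid pairs for $A$ and $\tau^-$-rigid pairs for $A^\op$. Consequently $(\mathcal{I},V)$ is maximal exactly when $(D\mathcal{I},DV)$ is maximal, so $\mathcal{I}$ is support $\tau$-tilting if and only if $D\mathcal{I}$ is support $\tau^-$-tilting. It remains to confirm that every $\tau^-$-rigid pair for $A^\op$ has the form $(D\mathcal{I},DV)$. This holds because $D$ is involutive: take $\mathcal{I} = D\mathcal{I}'$ and $V = DV'$.

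The composite bijection is $\mathcal{I} \mapsto D\mathcal{I} \mapsto \cogen D\mathcal{I} \mapsto D\,\cogen D\mathcal{I}$. By \Cref{duality} the last expression equals $\gen \mathcal{I}$, which establishes the correspondence. The main point of care is the claim that the duality respects the fp-idempotence built into the definitions and that $\tau$, $\tau^-$ correspond under $D$. These are exactly the facts packaged in \Cref{duality}, which rests on \Cref{finite} and \cite[Remark 4.6 (1)]{self2}, so they may be taken as given here. Beyond that, I only need to check that maximality transfers, which follows from the order isomorphism.
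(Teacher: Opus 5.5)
Your proposal is correct and follows the paper's approach: the paper also obtains this corollary by applying \Cref{mainideal} to $A^{\op}$ and transporting it along $D$ via \Cref{duality}. Your write-up supplies details the paper leaves implicit: that $D$ carries torsion classes to torsion-free classes, that it gives an order isomorphism of rigid pairs so maximality transfers, and that the composite map is $\mathcal{I} \mapsto D\,\cogen D\mathcal{I} = \gen \mathcal{I}$.
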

\begin{proof} The result follows from duality, see \Cref{duality}, by \Cref{mainideal}.
\end{proof}

We also want to establish a dual result of \Cref{taumain} for the module category of an Artin algebra. This requires the following definition.

\begin{df}\label{tautilting}\rm \phantom{X}
\begin{itemize}
    \item[(1)] A subset $U\subseteq \Ind A$ is \emph{$\tau$-rigid} if $\Hom_A^\mathrm{fin}(U, \tau U) = 0$.
    \item[(2)] We call $(U, V)$ a \emph{$\tau$-rigid} pair if $U \subseteq \Ind A$ is $\tau$-rigid and $V\subseteq \mathrm{Proj}\,A$ fulfills $\mathrm{Hom}_A(V,U) = 0$. There is a partial order on $\tau$-rigid pairs given by $(U, V) \leq (U', V')$ if $U\subseteq U'$ and $V\subseteq V'$.
    \item[(3)] A subset $U\subseteq \Ind A$ is \emph{support $\tau$-tilting} if there exists a maximal {$\tau$-rigid pair $(U, V)$}.
\end{itemize}
\end{df}

For $U \subseteq \Ind A$ let $\gen U $ be the collection of all $X\in \mod A$ that admit an epimorphism $\bigoplus_{i=1}^n Y_i\to X$ with $Y_i\in U$. The following is precisely the dual version of \Cref{core}.

\begin{prop}\label{core2} Let $U \subseteq \Ind A, V \subseteq \mathrm{Proj}\,A$ and $\mathcal{I} = \langle U\rangle $. Then $(U,V)$ is $\tau$-rigid if and only if $(\mathcal{I}, V)$ is $\tau$-rigid and in this case $\gen \mathcal{I} = \gen U$ is a torsion class. 
\end{prop}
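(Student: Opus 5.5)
We prove this by dualizing the argument of \Cref{core}, working directly with modules where possible rather than passing through $D$. Throughout we use \Cref{pres}: every morphism in $\mathcal{I}=\langle U\rangle$, and likewise in $\tau\mathcal{I}=\langle \tau U\rangle$, factors through a finite direct sum of modules in $U$ (respectively $\tau U$).

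\emph{First equivalence: $U$ is $\tau$-rigid if and only if $\tau\mathcal{I}\circ\mathcal{I}=0$.}
\begin{itemize}
\item[(a)] Suppose $U$ is $\tau$-rigid. A morphism in $\tau\mathcal{I}\circ\mathcal{I}$ factors as
\begin{align*}
X\to \bigoplus_{i=1}^n Y_i \to X' \to \bigoplus_{j=1}^m \tau Y'_j \to X''
\end{align*}
with $Y_i,Y'_j\in U$ and $X'\in\mod A$. The middle composite $\bigoplus Y_i\to X'\to\bigoplus\tau Y'_j$ has finite length image, since it factors through $X'$. By $\Hom_A^{\mathrm{fin}}(U,\tau U)=0$ it is zero, so the whole morphism is zero.
\item[(b)] Conversely, suppose $\tau\mathcal{I}\circ\mathcal{I}=0$, and let $X,Y\in U$ and $X\to\tau Y$ have finite length image $Z$. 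Choose an epimorphism $\bigoplus A\to X$ and a monomorphism $\tau Y\to\prod DA$. Each component $A\to X\to Z\to\tau Y\to DA$ lies in $\tau\mathcal{I}\circ\mathcal{I}$: the piece $A\to X\to Z$ is in $\mathcal{I}$, and $Z\to\tau Y\to DA$ is in $\tau\mathcal{I}$. Hence every component vanishes, and so $X\to\tau Y$ is zero.
\end{itemize}
Fp-idempotence of $\mathcal{I}$ holds automatically by \Cref{pres}.

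\emph{Second equivalence: the condition on $V$.} For $P\in\mathrm{Proj}\,A$ we show $\Hom_A(P,U)=0$ if and only if $\mathcal{I}(P,-)=0$. The forward direction is trivial. For the converse, take $f\colon P\to Y$ with $Y\in U$. Its image is finite length because $P$ is finitely generated, so $f$ factors through a finite length submodule $Z$ of $Y$. Embed $Y\to\prod DA$. Each component $P\to Z\to Y\to DA$ lies in $\mathcal{I}(P,DA)=0$, hence $f=0$.

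\emph{Third step: $\gen\mathcal{I}=\gen U$ is a torsion class.} To see that $\gen U$ is a torsion class, we avoid proving it directly and use duality. By \Cref{duality}, the pair $(D\mathcal{I},DV)$ is $\tau^-$-rigid. Then \Cref{core}, applied over $A^{\op}$ via \Cref{krau} with $D\mathcal{I}=\langle U'\rangle$, shows that $\cogen D\mathcal{I}$ is a torsion-free class. Consequently $\gen\mathcal{I}=D\cogen D\mathcal{I}$ is a torsion class.

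It remains to show $\gen\mathcal{I}=\gen U$.
\begin{itemize}
\item If $X\in\gen U$, there is an epimorphism $\bigoplus_{i=1}^n Y_i\to X$ with $Y_i\in U$. Compose it with a projective cover $P\to\bigoplus Y_i$ of a finitely generated submodule mapping onto $X$; such a submodule exists since $X$ has finite length. This gives an epimorphism $P\to X$ factoring through $\bigoplus Y_i$, which therefore lies in $\mathcal{I}$. So $X\in\gen\mathcal{I}$.
\item Conversely, let $Y\to X$ be an epimorphism in $\mathcal{I}$. By \Cref{pres} it factors through a finite sum $\bigoplus Y_i$ with $Y_i\in U$, and then $\bigoplus Y_i\to X$ is surjective. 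So $X\in\gen U$.
\end{itemize}

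The main obstacle is showing that $\gen U$ is a torsion class, because infinite length modules do not dualize well. The plan above handles this by routing through ideals, using \Cref{duality}, \Cref{core} over $A^{\op}$ and \Cref{finite}. Within the second equivalence, the only delicate point is the finite length factorization of $f\colon P\to Y$, which holds because $P$ is finitely generated.
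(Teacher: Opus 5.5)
Your proof is correct and takes the paper's route: the two equivalences dualize the argument of \Cref{core}, and the torsion-class claim goes through \Cref{duality} together with \Cref{core} over $A^{\op}$, which is exactly what the paper does. You also spell out the equality $\gen\mathcal{I}=\gen U$ via \Cref{pres}, which the paper states without proof.
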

\begin{proof} The fact that $(U,V)$ is  $\tau$-rigid if and only if $(\mathcal{I},V)$ is $\tau$-rigid can be shown as in the proof of \Cref{core}. Moreover $\gen \mathcal{I} = D\,\cogen D\mathcal{I} $ is a torsion class in this case by duality, see \Cref{duality}.
\end{proof}

\begin{coro} \label{main2} There is a one-to-one correspondence 
\begin{equation*}
    \begin{Bmatrix}
        \text{torsion classes}\\
        \mathcal{T}\subseteq \mod A
    \end{Bmatrix} \longleftrightarrow 
    \begin{Bmatrix}
        \text{support $\tau$-tilting subsets}\\
        {U}\subseteq \Ind A
    \end{Bmatrix}
\end{equation*}
given by  $U \mapsto \gen U$. Moreover, support $\tau$-tilting subsets are closed.
\end{coro}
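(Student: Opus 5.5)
The plan is to transport \Cref{mainideal2} from ideals back to subsets of $\Ind A$, exactly as \Cref{mainideal} was obtained from \Cref{taumain}, but in the reverse direction.

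\emph{Step 1: subsets versus ideals.} By \Cref{core2}, for every $U \subseteq \Ind A$ and $V \subseteq \mathrm{Proj}\,A$, the pair $(U,V)$ is $\tau$-rigid if and only if $(\langle U\rangle, V)$ is $\tau$-rigid. In that case $\gen U = \gen \langle U \rangle$ is a torsion class. By \Cref{pres}, $\langle U\rangle = \langle \bar U\rangle$. Applying \Cref{core2} to $\bar U$ then shows that $U$ is $\tau$-rigid if and only if $\bar U$ is.

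\emph{Step 2: support $\tau$-tilting subsets are closed.} Let $(U,V)$ be a maximal $\tau$-rigid pair. Then $(\bar U, V)$ is again a $\tau$-rigid pair, since $\langle \bar U\rangle = \langle U\rangle$ and the condition on $V$ is expressed through the ideal. Clearly $(U,V)\le(\bar U,V)$, so maximality forces $U = \bar U$. Hence every support $\tau$-tilting subset is closed.

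\emph{Step 3: the bijection.} By \Cref{krau}, $U\mapsto\langle U\rangle$ is a bijection between closed subsets of $\Ind A$ and fp-idempotent ideals. Every $\tau$-rigid ideal is fp-idempotent by definition. It follows that $\tau$-rigid pairs $(\mathcal I,V)$ correspond bijectively to $\tau$-rigid pairs $(U,V)$ with $U$ closed. This correspondence respects the partial order in both directions, because $\langle U\rangle\subseteq\langle U'\rangle$ if and only if $U\subseteq U'$ for closed $U, U'$ (again by \Cref{krau}, since $\langle U\cup U'\rangle$ corresponds to the closure of $U\cup U'$).

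Now I check that maximal pairs on both sides match.
\begin{itemize}
\item Suppose $(U,V)$ is maximal among all $\tau$-rigid pairs. By Step 2, $U$ is closed, so $(\langle U\rangle,V)$ is maximal among $\tau$-rigid ideal pairs.
\item Conversely, suppose $(\langle U\rangle,V)$ is maximal with $U$ closed, and let $(U',V')\ge(U,V)$ be $\tau$-rigid. Then $(\bar U',V')$ is $\tau$-rigid and dominates $(U,V)$. Maximality gives $\bar U'=U$, hence $U'=U$, and also $V'=V$.
\end{itemize}

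Hence support $\tau$-tilting subsets correspond to support $\tau$-tilting ideals, with $\gen U = \gen\langle U\rangle$. Composing with \Cref{mainideal2} yields the bijection $U\mapsto \gen U$.

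\emph{Main obstacle.} The delicate point is the order compatibility in Step 3 and the passage to closures in Step 2. Both rely on \Cref{pres} and \Cref{krau}, which give that $\langle -\rangle$ is injective and order-reflecting on closed sets. They also use the fact that the $\tau$-rigidity condition, including $\Hom_A(V,U)=0$, depends only on $\langle U\rangle$, which is what \Cref{core2} provides.
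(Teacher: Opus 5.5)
Your proposal is correct and follows essentially the same route as the paper. The paper likewise uses \Cref{core2} to identify support $\tau$-tilting subsets with support $\tau$-tilting ideals (with $\gen U = \gen \langle U\rangle$), concludes via \Cref{mainideal2}, and deduces closedness from $\langle U\rangle = \langle \bar U\rangle$ in \Cref{pres}; you additionally spell out, via \Cref{krau}, the order-compatibility and the matching of maximal pairs that the paper leaves implicit.
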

\begin{proof} By \Cref{core2} it follows that support $\tau$-tilting subsets $U \subseteq \Ind A$ are one to one with support $\tau$-tilting ideals $\mathcal{I}$ and $\gen U = \gen \mathcal{I}$. Now, the desired correspondence follows from \Cref{mainideal2}. The equality $\langle U\rangle = \langle \bar{U} \rangle$ for $U \subseteq \Ind A$, see \Cref{pres}, shows that support $\tau$-tilting subsets are closed.
\end{proof}

The above is a dual variant of the correspondence in \Cref{taumain}. Note that for torsion-free classes $\mathcal{F}\subseteq \mod A$ the associated support $\tau^-$-tilting subset may be recovered via $\mathcal{F}\mapsto \Inj \bar{\mathcal{F}}$. We do not know how to directly recover the support $\tau$-tilting subset corresponding to a torsion class. However, this will be achieved indirectly by investigating the relation between the support $\tau$-tilting subset and the support $\tau^-$-tilting subset corresponding to a torsion pair.
Recall that every indecomposable projective $P$ corresponds to a simple $S$ which corresponds to an injective $Q$. We set $P_* = Q$ and $Q_* = P$.

\begin{prop}\label{rel} Let $(\mathcal{T}, \mathcal{F})$ be a torsion pair in $\mod A$. Moreover, let $(U,V)$ be the maximal $\tau$-rigid pair corresponding to $\mathcal{T}$ and $(U',V')$ the maximal $\tau^-$-rigid pair corresponding to $\mathcal{F}$ with $U,U'\subseteq \Ind A$. Then $U' = \tau U \cup V_*$ and $U = \tau^-U'\cup V'_*$.
\end{prop}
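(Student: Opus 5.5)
Since $U$ and $U'$ are uniquely determined by $\mathcal{T}$ and $\mathcal{F}$ (by \Cref{main2} and \Cref{taumain}), the plan is to show that $\tau U\cup V_*$ is itself the support $\tau^-$-tilting subset attached to $\mathcal{F}$. For this it suffices to exhibit a maximal $\tau^-$-rigid pair with first component $\tau U\cup V_*$ whose cogen equals $\mathcal{F}$. The second identity $U=\tau^-U'\cup V'_*$ follows by the dual argument, or by applying $\tau^-$ to the first identity. Here we use $\tau^-\tau U=U$ on $U\setminus \mathrm{Proj}\,A$, together with the observation that projectives in $U$ correspond exactly to the injectives in $V'$.

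\emph{Step 1 (rigidity).} Set $W=\tau U\cup V_*$, where $\tau$ is applied to the non-projective members of $U$. Then $\tau^- W$ consists of the non-projective part of $U$, since $\tau^- Q=0$ for injective $Q$. We must show $\Hom^{\mathrm{fin}}_A(\tau^-W,W)=0$.

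The piece $\Hom^{\mathrm{fin}}(U,\tau U)=0$ is $\tau$-rigidity of $U$. For the piece $\Hom(U,Q)$ with $Q=P_*$ and $P\in V$: a nonzero map $X\to Q$ from a module $X$ has image containing the socle $S=\mathrm{top}\,P$. Hence some composition factor of $X$ is $S$, and so $\Hom(P,X)\neq 0$, contradicting $\Hom(V,U)=0$.

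It is cleanest to pass to ideals. With $\mathcal{I}=\langle U\rangle$ we use $\tau\mathcal{I}\circ\mathcal{I}=0$, together with the description in \Cref{finite} of $\tau\mathcal{I}$ through the stable categories.

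\emph{Step 2 (cogen).} Show $\cogen W=\mathcal{F}=\mathcal{T}^{\perp}$. For $\cogen W\subseteq\mathcal{F}$, the Auslander--Reiten formula (\Cref{arf}) should give $\Hom(\gen U,\tau U)=0$ on finite-length morphisms. Indeed, for $T\in\mathcal{T}$ and $X\in U$ we have $D\Ext^1(T,\tau X)$-type arguments. More directly, $\Hom(T,\tau X)$ with $T$ a quotient of $\bigoplus Y_i$, $Y_i\in U$, pulls back to a finite-image map $\bigoplus Y_i\to\tau X$, which vanishes by rigidity. Also $\Hom(T,P_*)=0$, since $\Hom(P,T)=0$ because $T\in\gen U$ and $\Hom(P,U)=0$. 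Thus $\Hom(\mathcal{T},W)=0$, and any submodule of a product of modules in $W$ lies in $\mathcal{T}^\perp=\mathcal{F}$.

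For the reverse inclusion, take $F\in\mathcal{F}$. Using the AR formula $D\Ext^1(X,Y)\cong\underline{\Hom}(\tau^-Y,X)$ in reverse, the pair $(\mathcal{T},\mathcal{F})$ forces $\Ext^1(\mathcal{F},\tau U)$-type vanishing. I would instead argue via maximality: $\cogen W$ is a torsion-free class contained in $\mathcal{F}$. Its left perpendicular torsion class then contains $\mathcal{T}$ and is $\gen$ of the corresponding support $\tau$-tilting set, and comparing via Step 3 gives equality.

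\emph{Step 3 (maximality), the main obstacle.} We must show that $(W,V'')$ is a maximal $\tau^-$-rigid pair, where $V''=\{Q\in \Inj A: \Hom(W,Q)=0\}$. Given a larger pair $(W_1,V_1)$, apply $\tau^-$ and add $(V_1)_*$ to produce a $\tau$-rigid pair dominating $(U,V)$, reversing the computations of Step 1. Maximality of $(U,V)$ then forces equality.

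The delicate point is the bookkeeping of projective and injective summands and the support parts. In particular we need:
\begin{itemize}
\item $\tau^-$ of the non-injective part of $W_1$ is $\tau$-rigid;
\item projectives $P$ with $P_*\in W_1$ satisfy $\Hom(P,\cdot)=0$ on the new set.
\end{itemize}
Both should follow from the symmetric form of the Step 1 arguments. To avoid infinite-length duality issues, I expect to run this at the level of fp-idempotent ideals using \Cref{finite} and \Cref{krau}. There, $\tau$ and $\tau^-$ act on stable ideals bijectively, and the identity $1_Q$ or $1_P$ summands account exactly for $V_*$ and $V'_*$.
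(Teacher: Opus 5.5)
\textbf{There is a genuine gap: Step 2 never establishes $\mathcal{F}\subseteq\cogen W$.} Your Steps 1 and 3 are essentially sound. In Step 3, take $(W_1\cap\Inj A)_*$ as the new projective component. Then obtain $U\cap\mathrm{Proj}\,A\subseteq (V_1)_*$ from $\Hom_A(P,\tau U)=0$ for projective $P\in U$, together with $\Hom_A(P'_*,P_*)\cong\Hom_A(P',P)$. So you do show that $W=\tau U\cup V_*$ is support $\tau^-$-tilting.

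The real obstacle is not Step 3 but that reverse inclusion, which you leave open. Nothing in Steps 1 and 3 tells you \emph{which} torsion-free class $W$ corresponds to; you only know $\cogen W\subseteq\mathcal{F}$. Your proposed ``comparison via maximality'' is circular. Identifying ${}^{\perp}\cogen W$ with $\mathcal{T}$ through its support $\tau$-tilting set requires knowing that this set is $\tau^-W\cup V''_*$ $(=U)$, which is the dual half of the proposition itself. Put abstractly, write $\Phi(U)=\tau U\cup V_*$. Your steps give at best an inflationary bijection $\mathcal{T}\mapsto{}^{\perp}\cogen\Phi(U_{\mathcal{T}})$ on the lattice of torsion classes, and on an infinite lattice that does not force the identity.

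\textbf{The missing ingredient is the Ext-vanishing you mention and then set aside.} By the Auslander--Reiten formula,
\[
D\Ext^1_A(Y,\tau X)\cong\overline{\Hom}_A(\tau X,\tau Y)\cong\underline{\Hom}_A(X,Y)=0
\]
for $X\in U$ and $Y\in\mathcal{F}$, since any map $X\to Y$ has finite length image in $\mathcal{T}\cap\mathcal{F}=0$. Hence $\Ext^1_A(\mathcal{F},\tau U)=0$. Your Step 2 plus \Cref{length} give $\tau U\subseteq\bar{\mathcal{F}}$. The argument of \Cref{keylem}(1) then makes every module in $\tau U$ injective in the exact category $\bar{\mathcal{F}}$. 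Also $V_*\subseteq\Inj\bar{\mathcal F}$, so $W\subseteq U'=\Inj\bar{\mathcal{F}}$.

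With this in hand, your Step 3 does finish the job:
\begin{itemize}
\item Applied to $(W,V'')$ itself, Step 3 gives $V''_*=U\cap\mathrm{Proj}\,A\subseteq\mathcal{T}$.
\item Hence $\Hom_A(\mathcal{F},Q)=0$, and so $\Hom_A(U',Q)=0$, for every $Q\in V''$.
\item Thus $(W,V'')\le(U',V')$, and maximality yields $W=U'$.
\end{itemize}

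The paper takes a different final route. It transfers $\tau\mathcal{I}+\langle V_*\rangle\subseteq\mathcal{I}'$ through duality of ideals to obtain $\tau^-U'\cup V'_*\subseteq U$. It then concludes by the squeeze $U'\supseteq\tau U\cup V_*\supseteq(U'\setminus\Inj A)\cup V_*$, using $U'\cap\Inj A=V_*$. Either way, the Ext-injectivity step is indispensable.
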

\begin{proof} First, we show that $\tau U \cup V_* \subseteq  U' = \Inj \bar{\mathcal{F}}$. The equality $\Hom_A(V, U) =0$ implies $\Hom_A( U, V_*) =0$ and so $\Hom_A(\mathcal{T}, V_*) =\Hom_A(\gen U , V_*) = 0$. Thus $\mathcal{F}$ contains $V_*$ and even $V_* \subseteq \Inj \bar{\mathcal{F}} $ since $V_*$ consists of injective $A$-modules. Moreover $\Hom_A^\mathrm{fin}(U,\tau U) = 0$ shows that $\Hom_A(\mathcal{T}, \cogen \tau U) = \Hom_A(\gen U, \cogen \tau U) = 0$ and so $\cogen \tau U \subseteq \mathcal{F}$. Hence $\tau U\subseteq \bar{\mathcal{F}}$. Applying the Auslander-Reiten formula in \cite[Corollary]{Krause3a} yields
\begin{align*}
    D\Ext_A^1(Y, \tau X ) \cong \overline{\mathrm{Hom}}_A(\tau X,  \tau Y) \cong \underline{\mathrm{Hom}}_A(X,Y) = 0
\end{align*}
for $X\in U$ and $Y\in \mathcal{F}$ since $\Hom_A(\gen U , \mathcal{F}) = \Hom_A(\mathcal{T},\mathcal{F}) = 0$. It follows that $\Ext_A^1(\mathcal{F}, \tau U) = 0$ and so $\tau U \subseteq \Inj \bar{\mathcal{F}}$.

The inclusion $\tau U \cup V_* \subseteq U'$ implies $\tau \mathcal{I} + \langle V_*\rangle \subseteq \mathcal{I}'$ where $\mathcal{I}$ is the support $\tau$-tilting ideal and $\mathcal{I}'$ the support $\tau^-$-tilting ideal corresponding to $\mathcal{T}$ and $\mathcal{F}$, respectively. On the level of ideals we can apply duality, see \Cref{duality}, to deduce that $\tau^- \mathcal{I}'+\langle V'_* \rangle \subseteq \mathcal{I}$. On the level of the Ziegler spectrum we obtain $\tau^- U'\cup V'_* \subseteq U$. Thus
\begin{align*}
    U'\supseteq \tau U \cup V_* \supseteq \tau (\tau^- U' \cup V'_*) \cup V_*= (U'\setminus \mathrm{Inj}\,A) \cup V_*. 
\end{align*}
Clearly $U' \cap \mathrm{Inj}\,A = V_*$ consists of all $Q\in \Inj A$ that fulfill $\Hom_A(\mathcal{T}, Q) = 0$ or equivalently $\Hom_A(U, Q ) = 0$. It follows that the above inclusions are equalities and so $U' = \tau U \cup V_*$. Similarly $U = \tau^- U'\cup V'_*$.
\end{proof}

\begin{exa}\label{kro2}\rm Let $A$ be the Kronecker algebra as in \Cref{kro}. We already determined the support $\tau^-$-tilting subsets $U\subseteq \Ind A$ that consist only of infinite length modules as
    \begin{align*}
        U = \{R_\infty^\lambda, \hat{R}^\mu , G\mid \lambda\in S, \mu \in T\}
    \end{align*}
    for $S\dot{\cup } T = k \cup \{\infty\}$. The corresponding torsion-free class $\mathcal{F}$ is given by the additive closure of $\mathcal{P}$ and all $\mathcal{R}^\lambda$ with $\lambda \in S$. Let $\mathcal{T}$ be the associated torsion class. By \Cref{rel} we know that $\tau U$ is the support $\tau$-tilting subset corresponding to $\mathcal{T}$. Since the infinite length $A$-modules are $\tau$-invariant here, it follows that $\mathcal{T} = \gen U$ which is the additive closure of $\mathcal{Q}$ and all $\mathcal{R}^\mu$ with $\mu \in T$.
\end{exa}

\section{Generic modules and bricks}

We will apply the developed theory to study special points inside the Ziegler spectrum of an Artin algebra $A$. Recall that an $A$-module $X$ is a \emph{brick} if $\mathrm{End}_A(X)$ is a division ring, and $X$ is \emph{generic} if $X$ is indecomposable, of infinite length, and has finite length over $\mathrm{End}_A(X)$. Generic modules are closed points inside $\Ind A$, see \cite[Proposition 6.17]{Krause}. We are particularly interested in generic bricks because of the following conjecture formulated by Mousavand and Paquette.

 \begin{conj}\label{strong}\cite[Conjecture 1.1]{MP} For a finite dimensional algebra $A$ over an algebraically closed field, the following are equivalent.
 \begin{itemize}
     \item[(1)] There are infinitely many non-isomorphic finite dimensional bricks.
     \item[(2)] There are infinitely many non-isomorphic finite dimensional bricks of the same dimension. 
     \item[(3)] There exists a generic brick.
 \end{itemize}
 \end{conj}

The equivalence between (2) and (3) of the conjecture has been shown for tame algebras by Bautista, P\' erez and Salmer\'on \cite{BPS}. We provide an alternative proof for tame algebras using infinite $\tau$-tilting theory. Moreover, we will prove the full conjecture under the assumption that the Krull-Gabriel dimension of $A$ is defined. A conjecture of Prest states that the Krull-Gabriel dimension of $A$ is defined if and only if the algebra $A$ is domestic 
\cite[Conjecture 9.1.15]{Prest}, which is known for several classes of algebras,  see \cite[Section 1]{Pastu} for a detailed discussion.

For now, we are still working over an arbitrary Artin algebra $A$. We say that  $X\in \Ind A$ is \emph{$\tau^-$-rigid} if $\{X\} \subseteq \Ind A$ is $\tau^-$-rigid. These modules coincide with the notion of grains in \cite{HLS}. The $\tau^-$-perspective is preferred to the dual perspective here, because it admits a more natural interpretation of the support $\tau^-$-tilting subset corresponding to a torsion-free class.
%\begin{coro}\label{oneimpl} Let $k$ be an algebraically closed field and $A$ a finite dimensional $k$-algebra. If $A$ is tame, then every generic brick is $\tau$-stable and rigid.
%\end{coro}
%\begin{proof} Over a tame algebra, every generic $A$-module $X$ is the accumulation point of infinitely many $\tau$-stable finite length $A$-modules in $ \Ind A$ and so $X$ is $\tau$-stable by \Cref{homeo}. This was already observed in \cite[Corollary 5.16]{Krause0}. Now if $X$ is also a brick, then every morphism $X\to X$ with a finite length image must be a non-isomorphism and hence zero. Thus $\{X\}\subseteq \Ind A$ is rigid by \Cref{rechar}. 
%\end{proof}

%We show that the converse of the above corollary holds. That is, every generic rigid module is a brick over a tame algebra. Before, we need the following lemma that holds for arbitrary Artin algebras.

\begin{lem}\label{genericbrick} Let $X$ be a generic $\tau^-$-rigid $A$-module. We also view $X$ as a module over $\mathrm{End}_A(X)$ and denote by $S = \mathrm{soc}_{\mathrm{End}_A(X)}X$ the socle of $X$ over $\mathrm{End}_A(X)$.
\begin{itemize}
    \item[(1)] The $A$-module $S$ is a generic brick and there is a short exact sequence
    \begin{align*}
        0 \longrightarrow S \longrightarrow X \longrightarrow Y \longrightarrow 0
    \end{align*}
    inside $\bar{\mathcal{F}}$ for the torsion-free class $\mathcal{F} = \cogen X$.
    \item[(2)] Every torsion-free class $\mathcal{G} \subseteq \Mod A$ containing $S$ also contains $X$. 
\end{itemize}
\end{lem}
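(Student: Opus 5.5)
The plan is to exploit that $X$ is injective in the exact category $\bar{\mathcal{F}}$, together with the finite length of $X$ over $E = \mathrm{End}_A(X)$. Since $\{X\}$ is $\tau^-$-rigid, \Cref{rechar} shows that it is cogen-rigid. Then \Cref{keylem} gives that $\mathcal{F} = \cogen X$ is a torsion-free class and $X \in \Inj \bar{\mathcal{F}}$. Recall that $\bar{\mathcal{F}}$ is a torsion-free class in $\Mod A$ (\cite[Lemma 4.4]{Crawley-Boevey2}), hence closed under submodules and products.

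\emph{Structure of $X$ over $E$.} Since $X$ is indecomposable pure-injective, $E$ is local with radical $J$. Because the $A$-action commutes with $E$, every subset of the form $\{x \in X \mid Tx = 0\}$ with $T \subseteq E$ is an $A$-submodule. In particular $S = \{x \mid Jx = 0\}$ is an $A$-submodule, and so is each term of the $E$-socle series $0 = S_0 \subset S_1 = S \subset \dots \subset S_l = X$.

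Next I would argue that $J$ is generated, up to acting on $X$, by finitely many elements $j_1,\dots,j_n$. Pick $E$-generators $x_1,\dots,x_m$ of $X$. The image $\bar E$ of $E$ in $\mathrm{End}_{\mathbb{Z}}(X)$ embeds into $X^m$ via $e \mapsto (ex_i)$, so $\bar E$ is left Artinian and its radical is a finitely generated left ideal. Hence $S = \ker(\varphi)$ for the $A$-linear map $\varphi = (j_1,\dots,j_n)\colon X \to X^n$. This gives $0 \to S \to X \to Y \to 0$ with $Y \hookrightarrow X^n$. Consequently $S, Y \in \bar{\mathcal{F}}$, and the sequence lies in $\bar{\mathcal{F}}$.

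\emph{$S$ is a generic brick.} Every endomorphism of $S$ extends to $X$ by injectivity of $X$ in $\bar{\mathcal{F}}$, since $S \to X$ is an admissible mono. So restriction gives a ring surjection from $\{e \in E \mid eS \subseteq S\} = E$ onto $\mathrm{End}_A(S)$, which kills $J$. Hence $\mathrm{End}_A(S)$ is a nonzero quotient of the division ring $E/J$, so it is a division ring and $S$ is a brick, in particular indecomposable. Moreover $S$ has finite length over $E/J = \mathrm{End}_A(S)$.

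The remaining and, I expect, hardest point is that $S$ has infinite length over $A$. I would argue by contradiction. If $S \in \mathcal{F}$ had finite length, then $S \to X$ would be an injective envelope in $\bar{\mathcal{F}}$, since $X$ is indecomposable injective and $S$ is essential (it is the $E$-socle). I would then use the left $\mathcal{F}$-approximation and the fact that $\mathrm{Hom}_A(S,X) \cong E/J$-worth of restrictions is finite length over the center. The goal is to produce an endomorphism of $X$ with finite length image, contradicting that $X$ is generic, i.e.\ of infinite length and indecomposable with local endomorphism ring. Failing that, I would compare with the closed point $\{X\}$ and use that the envelope of a finite length module in $\bar{\mathcal{F}}$ lies in the closure of $\mathcal{F}$-points. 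Pure-injectivity of $S$ would follow as a summand-type consequence of its extension property, which would make $S$ a point of $\Ind A$.

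\emph{Part (2).} Let $\mathcal{G}$ be a torsion-free class containing $S$; it is closed under products, submodules and extensions. The map $\varphi$ sends $S_{i+1}$ into $S_i^n$ and has kernel $S$, so $S_{i+1}/S \hookrightarrow S_i^n$. By induction on $i$: if $S_i \in \mathcal{G}$, then $S_{i+1}/S \in \mathcal{G}$, and the extension $0 \to S \to S_{i+1} \to S_{i+1}/S \to 0$ gives $S_{i+1} \in \mathcal{G}$. Since $X = S_l$, we conclude $X \in \mathcal{G}$.
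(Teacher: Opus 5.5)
Your overall structure matches the paper's. Injectivity of $X$ in $\bar{\mathcal{F}}$ makes restriction $E\to\mathrm{End}_A(S)$ surjective with kernel $J$, so $\mathrm{End}_A(S)\cong E/J$, and part (2) is an induction along the $E$-socle series. The genuine gap is the step you yourself flag as hardest: you never prove that $S$ has infinite length over $A$. What you write there is a list of strategies, and the main one is unsupported. You do not show that $S$ is essential in $X$ as an $A$-module, nor how to produce an endomorphism of $X$ with finite length image. Nor do you show why such an endomorphism would contradict the hypotheses, since $\tau^-$-rigidity only controls $\Hom_A^{\mathrm{fin}}(\tau^-X,X)$.

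The missing idea is a simple length count over the center $k$ of $A$. Central elements of $A$ act on $X$ by elements of $E$. So $k$ acts on each layer $S_i/S_{i-1}$ of the socle series through the division ring $E/J$. Each such layer is a finite dimensional $E/J$-vector space because $X$ has finite length over $E$. Hence $\ell_k(X)=(\sum_i\dim_{E/J}S_i/S_{i-1})\cdot\ell_k(E/J)$. Since $X$ has infinite length, this forces $\ell_k(E/J)=\infty$, and then the nonzero $E/J$-vector space $S$ has infinite length as well. This is what the paper means when it says that $\mathrm{End}_A(S)\cong E/J$ is a division algebra of infinite length over the center of $A$, which is impossible if $S$ has finite length. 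Pure-injectivity of $S$ need not be addressed, since genericity only asks for indecomposability, infinite length and finite endolength.

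There is also an error in your reduction to finitely many $j_1,\dots,j_n$. For $E$-generators $x_1,\dots,x_m$ of $X$, the map $e\mapsto(ex_i)_i$ need not be injective. In the noncommutative ring $E$, $ex_i=0$ does not give $e(e'x_i)=(ee')x_i=0$. An injective map $E\to X^m$ of this form exists only if $E$ is left Artinian, which you have not shown. No finiteness is needed, though. The canonical map $X\to\prod_{j\in J}X$ has kernel $S$, and its image lies in $\bar{\mathcal{F}}$ because $\bar{\mathcal{F}}$ is closed under products and submodules. This is exactly the paper's construction of the sequence. Similarly, in (2) use $S_{i+1}/S\hookrightarrow\prod_{j\in J}S_i$. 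With this change your induction is a correct variant of the paper's, which instead embeds $S_i/S_{i-1}$ into a product of copies of $S$ via the maps in $J^{i-1}$.
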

\begin{proof} (1) Consider the canonical map $X\to \prod X$ of $A$-modules where the product goes over all radical morphisms $X\to X$ and let $Y$ be its image. There is a short exact sequence $0 \to S \to X \to Y \to 0$ and $Y$ is contained inside the torsion-free class $\bar{\mathcal{F}}\subseteq \Mod A$ by \Cref{keylem}. Note that this short exact sequence is exactly the reject sequence in \cite[Definition 6.4]{HLS}.

Because $X$ is injective in $\bar{\mathcal{F}}$ it follows that for all $f\colon S \to S$ there is $g\colon X \to X$ making the diagram 
\begin{equation*}
    \begin{tikzcd}
        0 \arrow[r] & S \arrow[r] \arrow[d, "f", swap] & X \arrow[r] \arrow[d, "g"] & Y \arrow[d] \arrow[r] & 0\\
                0 \arrow[r] & S \arrow[r] & X \arrow[r] & Y \arrow[r] & 0
    \end{tikzcd}
\end{equation*}
commute. Conversely, every $g\colon X \to X$ gives rise to such a commutative diagram. In particular, the length of $S$ over its endomorphism ring coincides with the length of $S$ over $\mathrm{End}_A(X)$ which is bounded by the length of $X$ over $\mathrm{End}_A(X)$. Now, if $g$ is a radical morphism, then $f = 0$ and otherwise both $g$ and $f$ are isomorphisms. It follows that $\mathrm{End}_A(S)$ coincides with  $D= \mathrm{End}_A(X)/ \mathrm{rad}\,\mathrm{End}_A(X)$ which is a division algebra of infinite length over the center of $A$ since $X$ is generic. Thus $S$ is a generic brick.

(2) Consider the socle sequence
\begin{align*}
     S = S_1 \subsetneq  S_2\subsetneq \dots \subsetneq S_n = X
\end{align*}
where $S_i$ equals the intersection of kernels of morphisms in $M_i = (\mathrm{rad}\, \mathrm{End}_A(X))^i$ and $n$ is the smallest natural number with the property $M_n = 0$. Such $n$ exists by the Nakayama lemma as $X$ is of finite length over its endomorphism ring. For $i>1$ there is a natural monomorphism 
\begin{align*}
S_i/S_{i-1} \longrightarrow \prod S,\quad  x+S_{i-1}\mapsto (\varphi(x))_\varphi    
\end{align*}
where the product goes over all morphisms $\varphi \in M_{i-1}$. Hence, every torsion-free class $\mathcal{G} \subseteq \Mod A$ containing $S$ also contains $X$ since $\mathcal{G}$ is closed under products, submodules and extensions.
\end{proof}

\begin{prop}\label{inj} There is, up to isomorphism, an injective assignment
\begin{equation*}
    \begin{Bmatrix}
        \text{generic $\tau^-$-rigid $A$-modules $X$}
    \end{Bmatrix}\longrightarrow 
    \begin{Bmatrix}
        \text{generic bricks }$S$\text{ over }$A$
    \end{Bmatrix}
\end{equation*}
given by $X \mapsto \mathrm{soc}_{\mathrm{End}_A(X)} X$.
\end{prop}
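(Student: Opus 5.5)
The plan is to first check that the assignment is well defined, and then prove injectivity. Injectivity comes from two facts: two generic $\tau^-$-rigid modules with isomorphic socles define the same torsion-free class, and injectivity in the exact category $\bar{\mathcal{F}}$ then forces them to be isomorphic. Well-definedness is immediate from \Cref{genericbrick}(1): for a generic $\tau^-$-rigid $X$, the module $S=\mathrm{soc}_{\mathrm{End}_A(X)}X$ is a generic brick. Isomorphic modules clearly have isomorphic socles over their endomorphism rings, so we get an assignment on isomorphism classes.

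For injectivity, let $X,X'$ be generic $\tau^-$-rigid with socles $S\subseteq X$ and $S'\subseteq X'$, and fix an isomorphism $S\cong S'$. Put $\mathcal{F}=\cogen X$ and $\mathcal{F}'=\cogen X'$. These are torsion-free classes by \Cref{rechar} and \Cref{keylem}(1), and $X'\in\Inj\bar{\mathcal{F}'}\subseteq\bar{\mathcal{F}'}$.

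\emph{Step 1: $\mathcal{F}=\mathcal{F}'$.} Since $\bar{\mathcal{F}'}$ is a torsion-free class of $\Mod A$ containing $X'$, it contains the submodule $S'\cong S$. By \Cref{genericbrick}(2) it therefore contains $X$. As $\bar{\mathcal{F}'}$ is closed under products and submodules, every finite length submodule of a product of copies of $X$ lies in $\bar{\mathcal{F}'}\cap\mod A=\mathcal{F}'$. Hence $\mathcal{F}\subseteq\mathcal{F}'$, and by symmetry $\mathcal{F}=\mathcal{F}'$. Consequently both $X$ and $X'$ are injective objects of the same exact category $\bar{\mathcal{F}}$, by \Cref{keylem}(1).

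\emph{Step 2: comparison maps.} By \Cref{genericbrick}(1), the sequence $0\to S\to X\to Y\to 0$ is admissible in $\bar{\mathcal{F}}$, and likewise $0\to S'\to X'\to Y'\to 0$. Injectivity of $X'$ in $\bar{\mathcal{F}}$ lets us extend the composite $S\cong S'\hookrightarrow X'$ along $S\hookrightarrow X$ to a map $g\colon X\to X'$. Similarly we extend $S'\cong S\hookrightarrow X$ to a map $h\colon X'\to X$, using the inverse isomorphism. The composite $hg\colon X\to X$ then restricts to the inclusion on $S$.

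\emph{Step 3: $hg$ and $gh$ are isomorphisms.} By the proof of \Cref{genericbrick}, $S$ is the kernel of the canonical map $X\to\prod X$ over all radical endomorphisms, so every radical endomorphism of $X$ vanishes on $S$. Since $hg$ restricts to the nonzero inclusion on $S$, it is not radical. Now $X$ is indecomposable pure-injective, so $\mathrm{End}_A(X)$ is local, and hence $hg$ is an automorphism. The same argument shows $gh$ is an automorphism of $X'$, so $X\cong X'$.

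I expect the main subtlety to be Step 1. One has to make sure that \Cref{genericbrick}(2) really applies to the torsion-free class $\bar{\mathcal{F}'}$ of $\Mod A$, which it does by \cite[Lemma 4.4]{Crawley-Boevey2}. One also needs the identification $\bar{\mathcal{F}'}\cap\mod A=\mathcal{F}'$, which holds because $\fp\bar{\mathcal{F}'}=\mathcal{F}'$. Steps 2 and 3 are then standard injective-envelope style arguments.
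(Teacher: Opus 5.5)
Your proposal is correct and follows the paper's proof essentially step by step: you identify the two torsion-free classes via \Cref{genericbrick}(2), extend $S\cong S'$ to maps $X\rightleftarrows X'$ using injectivity in $\bar{\mathcal{F}}$, and conclude from the locality of $\mathrm{End}_A(X)$. The only cosmetic difference is how you see that $hg$ is non-radical: you use that radical endomorphisms kill $S$, whereas the paper argues that $(\beta\alpha)^n\neq 0$ for all $n$ while radical endomorphisms of $X$ are nilpotent by Nakayama.
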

\begin{proof} Let $X,X'$ be generic $\tau^-$-rigid $A$-modules such that $S = \mathrm{soc}_{\mathrm{End}_A(X)} X$ is isomorphic to $S' = \mathrm{soc}_{\mathrm{End}_A(X')} X'$.  The modules $S$ and $S'$ are generic bricks over $A$ by \Cref{genericbrick} (1) and there are short exact sequences
\begin{align*}
    0 \longrightarrow S \longrightarrow X \longrightarrow Y \longrightarrow 0, \qquad 0 \longrightarrow S' \longrightarrow X' \longrightarrow Y' \longrightarrow 0 
\end{align*}
where the first one is inside $\overline{\cogen X}$ and the second inside $\overline{\cogen X'}$. Since $S\cong S'$ is contained in both torsion-free classes, it follows that $\mathcal{G}:=\overline{\cogen X'} = \overline{\cogen X}$ by \Cref{genericbrick} (2). Now $X$ and $X'$ are injective in $\mathcal{G}$ by \Cref{keylem}. Thus, the isomorphism $ S\cong S'$ induces a commutative diagram
\begin{equation*}
\begin{tikzcd}
    0 \arrow[r] & S \arrow[d, "\rotatebox{90}{$\sim$}", swap] \arrow[r] & X \arrow[d, "\alpha"] \arrow[r] & Y \arrow[d] \arrow[r] & 0 \\
    0 \arrow[r] & S' \arrow[d, "\rotatebox{90}{$\sim$}", swap] \arrow[r] & X' \arrow[d, "\beta"] \arrow[r] & Y' \arrow[d] \arrow[r] & 0\\
    0 \arrow[r] & S \arrow[r] & X \arrow[r] & Y \arrow[r] & 0.
\end{tikzcd}
\end{equation*} In particular $(\beta \alpha)^n \neq 0$ for all $n\in \mathbb{N}$. Because $X$ has finite length over $\mathrm{End}_A(X)$ there is $m \in \mathbb{N}$ with $\gamma^m = 0$ for every radical morphism $\gamma\colon X\to X$ by the Nakayama lemma. It follows that $\beta \alpha$ is an isomorphism and so $\alpha$ is a split monomorphism. Since $X,X'$ are indecomposable, we conclude that $X\cong X'$.
\end{proof}

The previous proposition has been independently proven in the work of Angeleri H\"ugel, Laking, and Pfeifer \cite{ALP}. We show how to obtain a generic $\tau^-$-rigid module from a collection of finite length modules, based on the following lemma.

\begin{lem}\label{comp} Let $U \subseteq \Ind A$ be a collection of finite length modules. The ideal $\mathcal{I}= \langle \bar{U} \setminus U\rangle$ consists of all morphisms in $\mod A$ that factor through a finite direct sum of modules in $U \setminus V$ for all finite subsets $V\subseteq U$.
\end{lem}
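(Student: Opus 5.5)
The strategy is to write $\bar U\setminus U$ as a directed intersection of closed sets of the form $\bar U\setminus V$, describe each corresponding ideal via \Cref{pres}, and pass to the intersection using quasi-compactness of basic open sets of $\Ind A$.

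First I would record the topology. Every indecomposable finite length module is an isolated point of $\Ind A$ and also a closed point. Hence, for a finite subset $V\subseteq U$, the closure of $U\setminus V$ is $\bar U\setminus V$. Indeed, $V$ is closed, so $\bar U=\overline{U\setminus V}\cup V$, and $V\cap\overline{U\setminus V}=\emptyset$ because each point of $V$ is isolated. Setting $W_V=\bar U\setminus V$, the family $(W_V)_V$ is downward directed, and
\[
W:=\bar U\setminus U=\bigcap_{V\subseteq U \text{ finite}} W_V .
\]
By \Cref{pres}, $\langle W_V\rangle=\langle \overline{U\setminus V}\rangle$ is exactly the set of morphisms factoring through a finite direct sum of modules in $U\setminus V$. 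So the lemma reduces to the identity $\langle W\rangle=\bigcap_V\langle W_V\rangle$. The inclusion $\subseteq$ is trivial, since $W\subseteq W_V$.

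For the reverse inclusion I would use the description of fp-idempotent ideals underlying \Cref{krau}, due to Krause. For a closed set $C\subseteq\Ind A$, a morphism $\varphi$ in $\mod A$ lies in $\langle C\rangle$ if and only if $F(\varphi)=0$ for every finitely presented functor $F$, extended to $\Mod A$ so as to commute with filtered colimits and products, such that $F(M)=0$ for all $M\in C$. The direction "$\Rightarrow$" is immediate: if $\varphi$ factors through $\prod M_i$, then $F(\varphi)$ factors through $F(\prod M_i)=\prod F(M_i)=0$. The direction "$\Leftarrow$" is the content of Krause's correspondence between closed sets and Serre subcategories of finitely presented functors, which I take as known from \cite{Krause}.

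Now let $\varphi\in\bigcap_V\langle W_V\rangle$ and suppose $\varphi\notin\langle W\rangle$. Then there is a finitely presented $F$ with $F(W)=0$ and $F(\varphi)\neq0$. The set $O_F=\{M\in\Ind A\mid F(M)\neq0\}$ is a basic Ziegler-open set and is quasi-compact. Since $O_F\cap\bigcap_V W_V=\emptyset$, the sets $O_F\cap W_V$ form a directed family of relatively closed subsets of $O_F$ with empty intersection. Quasi-compactness then gives a single finite $V$ with $O_F\cap W_V=\emptyset$, that is, $F$ vanishes on $W_V$. But $\varphi\in\langle W_V\rangle$, so $F(\varphi)=0$ by the easy direction above, which is a contradiction.

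The main obstacle is invoking cleanly the functor-theoretic characterization of $\langle C\rangle$, namely that $\varphi\in\langle C\rangle$ iff $\varphi$ is annihilated by all finitely presented functors vanishing on $C$, together with quasi-compactness of the sets $O_F$. Both are standard facts from \cite{Krause} and Ziegler's theory, and I would cite them rather than reprove them.
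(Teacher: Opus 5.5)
Your proof is correct, but it reaches the key identity $\langle \bar U\setminus U\rangle=\bigcap_V\langle \bar U\setminus V\rangle$ by a different route than the paper.

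The paper works entirely at the level of ideals:
\begin{itemize}
\item it sets $\mathcal{I}_V=\langle U\setminus V\rangle$ and notes, via \Cref{pres} and \Cref{krau}, that $\mathcal{I}_V$ corresponds to the closed set $\bar U\setminus V$;
\item it cites \cite{self2} for the fact that the directed intersection $\mathcal{J}=\bigcap_V\mathcal{I}_V$ is again fp-idempotent;
\item it then reads off from Krause's bijection that $\mathcal{J}$ corresponds to $\bigcap_V(\bar U\setminus V)=\bar U\setminus U$. This step implicitly uses that the bijection is an order isomorphism.
\end{itemize}

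You instead prove the identity by hand. You use the description of $\langle C\rangle$, for closed $C$, as the morphisms annihilated by all finitely presented functors vanishing on $C$, together with Ziegler's quasi-compactness of the basic open sets $O_F$. In effect, your compactness argument proves directly what the paper gets from the cited result on directed intersections of fp-idempotent ideals combined with Krause's bijection. It makes the topological input explicit. The price is that you need the functorial characterization of $\langle C\rangle$. Its nontrivial direction is indeed part of Krause's correspondence, so citing it is legitimate. The paper's proof is shorter and never leaves the language of ideals.

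One point in your favour: to get $\overline{U\setminus V}=\bar U\setminus V$ you correctly use that finite length points are closed as well as isolated. The paper mentions only isolatedness, which by itself gives just the inclusion $\overline{U\setminus V}\subseteq \bar U\setminus V$.
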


\begin{proof}
    For every finite subset $V \subseteq U$ consider the fp-idempotent ideal $\mathcal{I}_V = \langle U \setminus V\rangle$. By  \Cref{pres} the ideal $\mathcal{I}_V$ corresponds, under \Cref{krau}, to the closure of $U\setminus V$ in $\Ind A$ which equals $\bar{U} \setminus V$ since the finite length modules are open points in $\Ind A$, see \cite[Example 3.7]{Krause}. Let $\mathcal{J} = \bigcap \mathcal{I}_V$ where the intersection goes over all finite subsets $V \subseteq U$. Then $\mathcal{J}$ is fp-idempotent as the intersection is directed, see \cite[Remark 5.4]{self2}. Now, under \Cref{krau}, the ideal $\mathcal{J}$ corresponds to the closed set $\bigcap \bar{U} \setminus V = \bar{U} \setminus U$. Hence $\mathcal{I} = \mathcal{J}$ consists of all morphisms in $\mod A$ that factor through a finite direct sum of modules in $U\setminus V$ for all finite subsets $V \subseteq U$.
\end{proof}

\begin{prop}\label{generate} Let $U \subseteq \Ind A$ be an infinite collection of finite length modules such that
\begin{itemize}
    \item[(1)] for all but finitely many $X\in U$ we have $\Hom_A(\tau^- Y , X ) = 0$ for all but finitely many $Y \in U$, or
    \item[(2)] for all but finitely many $Y\in U$ we have $\Hom_A(\tau^- Y , X) = 0$ for all but finitely many $X \in U$.
\end{itemize}
Then $\bar{U} \setminus U \subseteq \Ind A$ is a non-empty $\tau^-$-rigid set and the corresponding torsion-free class $\mathcal{F} = \cogen \bar{U}\setminus U$ is given by 
\begin{align*}
    \mathcal{F} = \bigcap_{\substack{V\subseteq U\\ \text{finite}}} \cogen U\setminus V.
\end{align*}
Moreover, if the length of $X$ over $\mathrm{End}_A(X)$ is bounded for $X\in U$, then $\bar{U}\setminus U$ contains a generic $\tau^-$-rigid module.  
\end{prop}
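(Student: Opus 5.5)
The plan is to reduce everything to the ideal $\mathcal{I} = \langle \bar{U}\setminus U\rangle$ and to use \Cref{core} and \Cref{comp}. I would prove the claims in the order: non-emptiness, $\tau^-$-rigidity, the formula for $\mathcal{F}$, and finally the generic module.

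\emph{Non-emptiness.} The Ziegler spectrum $\Ind A$ is quasi-compact, so the closed set $\bar U$ is quasi-compact. Every finite length module is an open (isolated) point of $\Ind A$ \cite[Example 3.7]{Krause}. If $\bar U = U$, then $U$ would be an infinite, discrete and quasi-compact space, which is impossible; hence $\bar U\setminus U \neq \emptyset$. The same isolation argument shows that $\bar U \setminus U$ contains no finite length module at all. Indeed, an isolated point of $\bar U$ lying in the closure of $U$ must already belong to $U$. So $\bar U\setminus U$ consists of infinite length modules only.

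\emph{$\tau^-$-rigidity.} By \Cref{core} it suffices to show $\mathcal{I}\circ\tau^-\mathcal{I}=0$. First I identify $\tau^-\mathcal{I}$. The set $\bar U\setminus U$ contains no injective module, since injectives have finite length. The map $\tau^-$ is a homeomorphism $\Ind A\setminus \Inj A\to \Ind A\setminus \mathrm{Proj}\,A$ by \Cref{homeo}. Removing the finitely many injectives from $U$ changes nothing, so $\tau^-(\bar U\setminus U)=\overline{\tau^- U}\setminus \tau^- U$. Then \Cref{comp}, applied to the collection $\tau^-U$ of finite length modules, shows the following: $\tau^-\mathcal{I}$ consists of the morphisms that factor through a finite direct sum of modules $\tau^-Y$ with $Y\in U\setminus W$, for every finite $W\subseteq U$.

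Now take a composite $g f$ with $f\in\tau^-\mathcal{I}$ and $g\in\mathcal{I}$. All modules involved have finite length, so $\Hom^{\mathrm{fin}}=\Hom$ here.
\begin{itemize}
\item[(1)] Let $V_0\subseteq U$ be the finite exceptional set. First factor $g$ through $\bigoplus_j X_j$ with $X_j\in U\setminus V_0$. Each of these finitely many $X_j$ has a finite set $W_j$ of $Y\in U$ with $\Hom_A(\tau^-Y,X_j)\neq 0$. Then factor $f$ through $\bigoplus_i \tau^-Y_i$ with $Y_i\notin \bigcup_j W_j$. The middle maps $\tau^- Y_i\to X_j$ vanish, so $gf=0$.
\item[(2)] Symmetrically, first factor $f$ through $\tau^-Y_i$ with $Y_i$ outside the exceptional set. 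Then factor $g$ through modules $X_j$ avoiding the finitely many bad $X$ attached to these $Y_i$.
\end{itemize}
The key point in both cases is to fix the factorization of one morphism before choosing the other.

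\emph{The formula for $\mathcal{F}$.} By \Cref{core}, $\mathcal{F}=\cogen \mathcal{I}$. Thus $X\in\mathcal{F}$ if and only if the injective hull $X\to Q$ lies in $\mathcal{I}$. By \Cref{comp} this means that, for every finite $V$, the hull factors through a finite sum of modules in $U\setminus V$. Such a factorization forces a monomorphism from $X$ into that sum. Conversely, if $X\in\cogen(U\setminus V)$, then a monomorphism into a product can be replaced by one into a finite direct sum, because $\mod A$ is a length category (\Cref{length}(3)); the hull then factors through it. Hence $X\in\mathcal{F}$ if and only if $X\in\cogen(U\setminus V)$ for all finite $V$.

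\emph{Generic module.} Suppose the endolength of the modules in $U$ is bounded by some $d$. The condition ``endolength $\le d$'' defines a closed subset of $\Ind A$; this is the step I expect to need a citation from \cite{Krause} or Crawley-Boevey, and it is the main obstacle. Granting it, every point of $\bar U\setminus U$ has endolength at most $d$. We showed such a point has infinite length, and it is indecomposable, so it is generic. Any single point of a $\tau^-$-rigid set is $\tau^-$-rigid, which gives the generic $\tau^-$-rigid module.
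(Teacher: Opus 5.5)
Your proposal is correct and follows the paper's proof essentially step for step. You reduce $\tau^-$-rigidity to $\mathcal{I}\circ\tau^-\mathcal{I}=0$ via \Cref{core}, describe both ideals using \Cref{comp} and \Cref{homeo}, fix one factorization before choosing the other, read off $\mathcal{F}$ from the injective hull lying in $\mathcal{I}$, and obtain non-emptiness from compactness and the isolation of finite length points. The closedness of the bounded-endolength locus, which you flag as the remaining input, is exactly what the paper imports by citing Herzog (Corollary 9.4) for the endolength bound on points of $\bar U\setminus U$.
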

\begin{proof} We only prove the case (1) as (2) is similar. By \Cref{core} the set $\bar{U}\setminus U$ is $\tau^-$-rigid if and only if $\mathcal{I} = \langle \bar{U} \setminus U \rangle$ is $\tau^-$-rigid. By \Cref{homeo} the ideal $\tau^- \mathcal{I} $ agrees with $\langle \tau^- \bar{U} \setminus \tau^- U \rangle$. With \Cref{comp} it follows that $\mathcal{I}$ consists of all morphisms that factor through a finite direct sum of modules in $U \setminus V$ for all finite subsets $V\subseteq U$, and $\tau^- \mathcal{I}$ consists of all morphisms that factor through a finite direct sum of modules in $\tau^- (U \setminus  W)$ for all finite subsets $W \subseteq U$. Consider a composition $\varphi \psi$ with $\varphi\in \mathcal{I}$ and $\psi \in \tau^- \mathcal{I}$. By assumption, there is a finite subset $V \subseteq U$ such that for all $X \in U \setminus V$ we have $\Hom_{A}(\tau^- Y, X) = 0$ for all but finitely many $Y \in U$. Now $\varphi$ factors through a finite direct sum $\bigoplus_{i=1}^n X_i$ with $X_i \in U \setminus V$. For each $i$ let $W_i$ be the finite set of all modules $Y\in  U$ with $\Hom_{A}(\tau^- Y, X_i)\neq 0$ and set $W = \bigcup_{i=1}^n W_i$. Then $\psi$ factors through a finite direct sum $\bigoplus_{j=1}^m \tau^- Y_j$ with $Y_j \in U \setminus W$. By construction $\Hom_A (\tau^- Y_j, X_i) = 0$ for all $i,j$ and thus $\varphi \psi = 0$. \mbox{It follows} that $\mathcal{I} \circ \tau^- \mathcal{I} = 0$ and $\bar{U} \setminus U$ is $\tau^-$-rigid. 

To deduce the desired description of $\mathcal{F}$ we use that $\cogen \bar{U} \setminus U = \cogen \mathcal{I}$, see \Cref{core}, as well as the description of $\mathcal{I}$. For $Z\in \mod A$ the injective hull $Z\to Q$ is contained in $\mathcal{I}$ if and only if $Z\in \cogen U \setminus V$ for all finite subsets $V\subseteq U$ and so $\mathcal{F} = \cogen \bar{U}\setminus U = \cogen \mathcal{I} = \bigcap_V \cogen U \setminus V$.

Next, we show that $\bar{U} \setminus U$ is non-empty. Since ${U}$ is an infinite set and $\Ind A$ is compact \cite{Ziegler} the set $\bar{U}$ contains an accumulation point, which cannot be a finite length module as finite length modules are isolated points, see \cite[Example 3.7]{Krause}. Finally, if the length of $X$ over $\mathrm{End}_A(X)$ is bounded by some $n\in \mathbb{N}$ for $X\in U$ then the length of $Y \in \bar{U}\setminus U$ over $\mathrm{End}_A(Y)$ is bounded by $n$, see \cite[Corollary 9.4]{Herzog}. \linebreak Thus, in this case every $Y \in \bar{U} \setminus U$ is a generic $\tau^-$-rigid module.
\end{proof}

We are now ready to show the equivalence of (2) and (3) in \Cref{strong} over tame algebras, which was first proven in \cite{BPS} using matrix reduction techniques. 

\begin{thm}\label{tame} For a tame finite dimensional algebra $A$ over an algebraically closed field, the following are equivalent. 
\begin{itemize}
    \item[(1)] There are infinitely many non-isomorphic finite dimensional bricks of the same dimension.
    \item[(2)] There exists a generic brick.
\end{itemize}
\end{thm}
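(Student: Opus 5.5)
The plan is to prove both implications by passing through generic $\tau^-$-rigid modules. Proposition~\ref{generate} and Proposition~\ref{inj} supply the general machinery. Tameness enters only through Crawley-Boevey's structure theory: in each dimension, all but finitely many indecomposables lie in finitely many one-parameter families $M_\lambda = M\otimes_{k[x]} k[x]/(x-\lambda)$. Here $M$ is an $A$-$k[x]$-bimodule, finitely generated free over $k[x]$. Almost all of these modules satisfy $\tau M_\lambda \cong M_\lambda$. Each generic module is of the form $M\otimes_{k[x]} k(x)$ for such a family, and it is $\tau$-invariant, being a limit of $\tau$-invariant points (\Cref{homeo}).

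\textbf{(1)$\Rightarrow$(2).} Fix $d$ with infinitely many bricks of dimension $d$. By the above, infinitely many of them lie in a single family $M_\lambda$ and satisfy $\tau X\cong X$, hence $\tau^- X\cong X$. Let $U$ be this infinite set. For $X\neq Y$ in $U$ we have $\Hom_A(\tau^-Y,X)=\Hom_A(Y,X)$, and I want this to vanish. The justification is that almost all modules of a family sit at the mouths of pairwise distinct homogeneous tubes; distinct homogeneous tubes are Hom-orthogonal, and their mouth modules are bricks. After removing finitely many elements, hypothesis (1) of Proposition~\ref{generate} therefore holds, in fact with $\Hom(\tau^-Y,X)=0$ for all $Y\ne X$. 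Since each $X\in U$ has $\mathrm{End}_A(X)=k$, its length over $\mathrm{End}_A(X)$ equals $d$, which is bounded. Proposition~\ref{generate} then yields a generic $\tau^-$-rigid module in $\bar U\setminus U$, and Proposition~\ref{inj} (Lemma~\ref{genericbrick}(1)) turns it into a generic brick.

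\textbf{(2)$\Rightarrow$(1).} Let $S$ be a generic brick; we may write $S\cong M\otimes_{k[x]}k(x)$ for a one-parameter family $M$. The goal is to show that $M_\lambda$ is a brick for almost all $\lambda$. These modules are pairwise non-isomorphic and all have dimension equal to the endolength of $S$, which gives (1). First I would reduce to the case where $M\otimes_{k[x]}-$ is fully faithful up to finitely many points: Crawley-Boevey's theorem provides bimodules for which $\mathrm{End}_A(M\otimes_{k[x]}N)\cong \mathrm{End}_{k[x]}(N)$ modulo a nilpotent part controlled generically. Then I would compare $\mathrm{End}_A(S)$, which is a division ring by assumption (thus $k(x)$), with $\mathrm{End}_A(M_\lambda)$ using upper semicontinuity of $\lambda\mapsto\dim_k\mathrm{End}_A(M_\lambda)$. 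Concretely, $\mathrm{End}_A(M)$ is a $k[x]$-module whose localization at $k(x)$ is $\mathrm{End}_A(S)$, which has rank $1$. Away from finitely many $\lambda$ the reduction $\mathrm{End}_A(M)\otimes k[x]/(x-\lambda)\to\mathrm{End}_A(M_\lambda)$ should be an isomorphism (generic base change for Hom of finitely generated free modules), giving $\mathrm{End}_A(M_\lambda)=k$.

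As a cross-check I would also run the theory of this paper. Since $S$ is a brick and $\tau^-S\cong S$, every endomorphism of $S$ with finite-length image is a non-isomorphism and hence zero, so $S$ is $\tau^-$-rigid. If infinitely many $M_\lambda$ failed to be bricks, one would take the images of their nilpotent endomorphisms, which have bounded dimension. A Ziegler-closure argument using \Cref{comp} should then produce a nonzero radical endomorphism of $S$ with finite length over $\mathrm{End}(S)$, which is a contradiction. I expect the main obstacle to be exactly this step: controlling $\mathrm{End}_A(M_\lambda)$ generically from $\mathrm{End}_A(S)$. The first approach I would try is the base-change/semicontinuity argument, falling back on the closure argument if that fails.
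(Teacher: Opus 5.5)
Your treatment of (2)$\Rightarrow$(1) is essentially the paper's argument. Generic base change for the endomorphism ring of the finitely generated free $k[t]$-module $M$ (after inverting finitely many elements), combined with $\mathrm{End}_A(S)\cong k(t)$, gives $\mathrm{End}_A(M_\lambda)\cong k$ for almost all $\lambda$. The preliminary reduction to a ``fully faithful'' bimodule and the Ziegler-closure cross-check are unnecessary.

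For (1)$\Rightarrow$(2) you follow the paper's skeleton: verify the hypothesis of \Cref{generate} for a $\tau$-invariant family, then apply \Cref{genericbrick}. However, the one step with real content, the vanishing of $\Hom_A(Y,X)$ for $X\neq Y$ in $U$, rests on a false principle. Distinct homogeneous tubes of a tame algebra need not be Hom-orthogonal, and their mouth modules need not be bricks.
\begin{itemize}
\item[(a)] Take $A=k[x,y]/(x,y)^2$ and the two-dimensional modules $M_\lambda$ with $xe_0=e_1$, $ye_0=\lambda e_1$. They satisfy $\tau M_\lambda\cong M_\lambda$ and sit at the mouths of pairwise distinct homogeneous tubes of a single one-parameter family. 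Yet the map sending the top of $M_\lambda$ onto the socle of $M_\mu$ is non-zero for all $\lambda,\mu$.
\item[(b)] For tubular algebras there are non-zero maps from homogeneous tubes of slope $q$ to those of slope $q'>q$, even though their mouth modules are bricks.
\end{itemize}
So the orthogonality has to be derived from the fact that the modules in $U$ are bricks, and your argument never uses this.

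The paper gets orthogonality by citing \cite[Proposition 3.4]{MP2}, which replaces the given bricks by an infinite Hom-orthogonal family of bricks of the same dimension; it then uses \cite{CB10} for $\tau$-invariance. Inside your one-parameter family you can instead repair the step by semicontinuity. The set $Z=\{(\lambda,\mu)\mid \Hom_A(M_\lambda,M_\mu)\neq 0\}$ is closed. Suppose $Z$ were everything, and take a point $(\lambda_0,\lambda_0)$ with $M_{\lambda_0}$ a brick. Near it, upper semicontinuity would make $\dim_k\Hom_A(M_\lambda,M_\mu)$ constantly $1$. Then there would be a local nowhere-vanishing algebraic family of homomorphisms $f_{\lambda,\mu}$. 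Since $f_{\lambda_0,\lambda_0}$ is a non-zero endomorphism of a brick, it is invertible. Invertibility is an open condition, so $M_\lambda\cong M_\mu$ for some $\lambda\neq\mu$, a contradiction.

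Hence $Z$ is a proper closed subset of $k^2$. Discard the finitely many $\mu$ for which $Z$ contains the whole line $k\times\{\mu\}$; then hypothesis (1) of \Cref{generate} holds. This gives vanishing only generically, not for all $Y\neq X$ as you claim, but that suffices. With this repair the rest of your argument goes through.
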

\begin{proof} (1)$\implies$(2) By \cite[Proposition 3.4]{MP2} we may assume that there is an infinite hom-orthogonal collection $U$ of non-isomorphic bricks of the same dimension $n$. That is $\Hom_A(X,Y) = 0$ for all $X\neq Y$ in $U$. Over a tame algebra, all but finitely many of these modules are $\tau$-invariant \cite{CB10}. It follows that $U$ fulfills the assumptions in \Cref{generate} and so $\bar{U}\setminus U$ contains a generic $\tau^-$-rigid module, as the length of $X\in U$ over $\mathrm{End}_A(X)$ is bounded by $n$. By \Cref{genericbrick} the existence of a generic $\tau^-$-rigid module implies the existence of a generic brick.

(2)$\implies$(1) Let $k$ be the ground field. By \cite{CB11} every generic $A$-module $X$ is of the form $X\cong {}_AM_{k[t]_f} \otimes_{k[t]_f} k(t)$ such that
\begin{itemize}
    \item[(i)] $M$ is free over $k[t]_f$ of finite rank, 
    \item[(ii)] $\mathrm{End}_A(X)/ \mathrm{rad}\, \mathrm{End}_A(X) \cong k(t)$, and 
    \item[(iii)] $M\otimes_{k[t]_f} k[t]/(t-\lambda)$ for $f(\lambda) \neq 0$ are all non-isomorphic $A$-modules.
\end{itemize}
Here $k[t]_f$ is the ring of polynomials in one variable over $k$ localized at $f\in k[t]$. Assume that $X$ is a brick, so $\mathrm{End}_A(X) \cong k(t)$. Since $M$ is free over $k[t]_f$ of finite rank, it follows that after a finite localization $M_g$ with $g \in k[t]_f$ base change commutes with taking the endomorphism ring. Thus $\mathcal{E} = \mathrm{End}_{A\otimes_k k[t]_{f,g}} (M_{g})$ fulfills $\mathcal{E} \otimes_{k[t]_{f,g}} k(t) \cong \mathrm{End}_{A\otimes_k k(t)} (X) \cong k(t)$ and so 
\begin{align*}
    \mathrm{End}_{A} (M \otimes_{k[t]_{f,g}} k [t]/(t-\lambda)) \cong \mathcal{E} \otimes_{k[t]_{f,g}} k[t]/(t-\lambda) \cong k
\end{align*}
for all but finitely many $\lambda \in k$. This yields the desired family of infinitely many non-isomorphic bricks.
\end{proof}

The Krull-Gabriel dimension, introduced by Geigle \cite{Geigle}, is a measure for the complexity of the module category. We show the full \Cref{strong} under the assumption that the Krull-Gabriel dimension of $A$ is defined. In this case, the Ziegler spectrum has the following nice property.

\begin{lem}\label{kg} If the Krull-Gabriel dimension of $A$ is defined, then every closed subset $U \subseteq \Ind A$ that contains infinitely many finite length modules or an infinite length module must contain a generic module.   
\end{lem}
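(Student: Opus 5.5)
We use the Cantor–Bendixson analysis of the Ziegler spectrum, which is available precisely when the Krull-Gabriel dimension is defined. By a result of Ziegler and Herzog (see also Krause's work on the Ziegler spectrum and Krull-Gabriel dimension), if $\mathrm{KG}(A)$ is defined then the Cantor–Bendixson rank of $\Ind A$ is defined. Hence every non-empty closed subset $W \subseteq \Ind A$, being compact in the induced topology, contains an isolated point of $W$ of maximal CB-rank within $W$. Moreover, the isolation condition on Ziegler-closed sets gives that every non-empty closed subset $W$ has a point isolated in $W$ by a minimal pair / minimal functor. The plan is to start from the closed set $U$ and pass to a closed subset $W \subseteq U$ consisting of infinite length modules only. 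Then we show that a suitable point of $W$ is generic.

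\textbf{Step 1: reduction to infinite length points.} If $U$ contains infinitely many finite length modules, then by compactness of $\Ind A$ the set $U$ has an accumulation point. This point cannot be of finite length, since finite length modules are isolated points \cite[Example 3.7]{Krause}. So in either case $U$ contains an infinite length point. Set $W = U \setminus \mod A$. It is closed because the finite length points are open, and it is non-empty.

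\textbf{Step 2: choose a point isolated in $W$ of maximal rank.} Since the CB-rank is defined, $W$ is non-empty and compact. Let $\alpha$ be the maximal CB-rank occurring in $W$; it exists because the derived sets $W^{(\beta)}$ are closed and compact and the sequence terminates. The set $W^{(\alpha)}$ is a non-empty closed set all of whose points are isolated in it, so $W^{(\alpha)}$ is finite. Pick $X \in W^{(\alpha)}$. Then $\{X\}$ is closed and $X$ is isolated in the closed set $W^{(\alpha)}\subseteq \Ind A \setminus \mod A$.

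\textbf{Step 3: show $X$ is generic.} This is the main obstacle. One must show that an indecomposable pure-injective of infinite length that is closed in $\Ind A$ and isolated in a closed set (by a minimal pair) has finite endolength, once $\mathrm{KG}(A)$ is defined. The route I would try is to use the description of the Ziegler spectrum in terms of Serre subcategories of the category of finitely presented functors.

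Localizing at the Serre subcategory $\mathcal{S}$ corresponding to the closed set $W^{(\alpha)}$, the point $X$ corresponds to a simple object of the localized functor category, by isolation (Krause/Herzog). The endolength of $X$ equals the sum over $Y\in\mod A$ of the lengths of $\Hom_A(Y,X)$ over $\mathrm{End}_A(X)$, computed through this localization.

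Defined KG-dimension gives that the localized category again has defined KG-dimension. Since $\{X\}$ is closed, the localization at $\{X\}$ should be a length category with a unique simple. Then each representable functor $\Hom_A(Y,-)$ has finite length there, which bounds the length of $\Hom_A(Y,X)$ over $\mathrm{End}_A(X)$; this uses Herzog's endolength criterion \cite{Herzog}. To pass from each $Y$ separately to finite total endolength, I would invoke Crawley-Boevey's characterization: finite endolength is equivalent to $\Hom(Y,X)$ having finite $\mathrm{End}$-length for each generator. Since $\mathrm{Hom}_A(A,X)=X$, it suffices to bound the length for $Y=A$. An infinite length point of finite endolength that is indecomposable is by definition generic, finishing the proof.

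If this direct argument stalls, the fallback is to cite the known result (Krause, \emph{Generic modules over Artin algebras}, or Prest's book) that when $\mathrm{KG}(A)$ is defined, every closed point of $\Ind A \setminus \mod A$ of maximal CB-rank in a closed set is of finite endolength.
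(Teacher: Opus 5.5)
The paper gives no argument of its own here; it only points to \cite[Lemma 4.4]{self3}. I therefore judge your proposal on its own merits.

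Steps 1 and 2 are fine. $W=U\setminus\mod A$ is closed and non-empty. Since the Cantor--Bendixson rank is defined and $W$ is quasi-compact, the last non-empty derived set $W^{(\alpha)}$ is finite and discrete, so any $X\in W^{(\alpha)}$ is a closed point of infinite length. Only the closedness of $\{X\}$ is used later; the localization at $W^{(\alpha)}$ plays no role.

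The gap is in Step 3. The sentence ``the localization at $\{X\}$ should be a length category with a unique simple'' is exactly where the Krull--Gabriel hypothesis has to do its work, and you give no argument for it. The fallback is not a precise citation either. Step 3 also contains two misstatements. First, the endolength of $X$ is not a sum over $Y\in\mod A$ of lengths of $\Hom_A(Y,X)$; such a sum is typically infinite. It is simply the length of $\Hom_A(A,X)=X$ over $\mathrm{End}_A(X)$, so the detour through Crawley-Boevey's criterion is unnecessary. Second, $X$ corresponds to an indecomposable injective (the envelope of a simple), not to a simple object.

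Here is how to close the gap.
\begin{itemize}
\item Describe $\Ind A$ by the indecomposable injectives $E_X=X\otimes_A-$ of the locally coherent category of additive functors $\mod A^{\mathrm{op}}\to\Ab$. Then $\mathrm{End}(E_X)\cong\mathrm{End}_A(X)$, and $\Hom(F_A,E_X)\cong X$ for $F_A=\Hom_{A^{\mathrm{op}}}(A,-)$. The finitely presented functors have the same Krull--Gabriel dimension as $A$.
\item Let $\mathcal{S}$ be the Serre subcategory of finitely presented $F$ with $\Hom(F,E_X)=0$, and let $\mathcal{Q}$ be the quotient. Since $\{X\}$ is closed, the spectrum of $\mathcal{Q}$ is $\{X\}$. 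By construction, $E_X$ detects every non-zero object of $\mathcal{Q}$.
\item Since $\mathcal{Q}\neq 0$ has defined Krull--Gabriel dimension, its Serre subcategory $\mathcal{Q}_0$ of finite length objects is non-zero. If $\mathcal{Q}/\mathcal{Q}_0\neq 0$, its spectrum would be a non-empty subset of $\{X\}$. Then $E_X$ would annihilate $\mathcal{Q}_0\neq 0$, which is absurd. Hence $\mathcal{Q}$ is a length category.
\item Every simple object of $\mathcal{Q}$ embeds into $E_X$, so $E_X$ has a simple socle $S$. Moreover $\Hom(S,E_X)\cong\mathrm{End}(S)$ is a simple $\mathrm{End}(E_X)$-module, because restriction $\mathrm{End}(E_X)\to\mathrm{End}(S)$ is surjective.
\item Since $E_X$ is injective and kills $\mathcal{S}$, the functor $\Hom(-,E_X)$ is exact and factors through $\mathcal{Q}$. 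So the length of $X$ over $\mathrm{End}_A(X)$ equals the length of the image of $F_A$ in $\mathcal{Q}$, which is finite. Thus $X$ is generic.
\end{itemize}

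With this argument inserted, your route is a complete proof: produce a closed point of infinite length in $U$, then show that closed points have finite endolength when the Krull--Gabriel dimension is defined.
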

\begin{proof} See the proof of \cite[Lemma 4.4]{self3}.
\end{proof}

\begin{thm}\label{KG} Let $A$ be a finite dimensional algebra over an algebraically closed field whose Krull-Gabriel dimension is defined. Then $A$ fulfills \Cref{strong}.    
\end{thm}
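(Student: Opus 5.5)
We need to show (1)$\Leftrightarrow$(2)$\Leftrightarrow$(3) of \Cref{strong} when the Krull-Gabriel dimension is defined. The implications (2)$\implies$(1) and (3)$\implies$(2) need no hypothesis on the Krull-Gabriel dimension. (2)$\implies$(1) is trivial. For (3)$\implies$(2), I would not rely on tameness: when the Krull-Gabriel dimension is defined the algebra is expected to be domestic, but this is only a conjecture of Prest. Instead I would go through the general theory. A generic brick $S$ has $\mathrm{End}_A(S)$ a division ring, so every endomorphism with finite length image is zero; hence $\{S\}$ is $\tau^-$-rigid once we know $\tau^- S\cong S$. That isomorphism is not automatic. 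So, rather than route through $\tau^-$-rigidity, the more robust plan is to cite the result of \cite{BPS}/\cite{MP} that a generic brick yields infinitely many bricks of a fixed dimension. The paper's argument in \Cref{tame} via \cite{CB11} requires tameness, but an algebra admitting a generic module with defined Krull-Gabriel dimension is known to be tame (wild algebras have undefined Krull-Gabriel dimension, e.g. by Prest's results on wildness). I would invoke that fact and apply \Cref{tame}.

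The main work is (1)$\implies$(3). Let $U\subseteq \Ind A$ be an infinite set of finite dimensional bricks. First I would aim to arrange hypothesis (1) or (2) of \Cref{generate}, so that $\bar U\setminus U$ is a non-empty $\tau^-$-rigid closed set. Since $U\setminus V$ need not be hom-orthogonal, I would instead pass to a well-chosen subfamily. By \cite[Proposition 3.4]{MP2}-type arguments (or Ramsey), one can choose an infinite subfamily that is either hom-orthogonal or totally ordered in a compatible way. Alternatively, take the torsion-free class $\mathcal{F}=\bigcap_V \cogen(U\setminus V)$ and consider $W=\Inj\bar{\mathcal F}$, which is closed and $\tau^-$-rigid by \Cref{mai} and \Cref{rechar}. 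Then show that $W$ contains an infinite length module or infinitely many finite length modules.

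Once a closed $\tau^-$-rigid set $W$ containing infinite length points is available, \Cref{kg} gives a generic module $X\in W$. Since $W$ is $\tau^-$-rigid, so is $\{X\}$. \Cref{genericbrick}(1) then yields the generic brick $\mathrm{soc}_{\mathrm{End}_A(X)}X$, which establishes (3).

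The hard step is producing the $\tau^-$-rigid closed set with an infinite length point from an arbitrary infinite family of bricks. I would first try the following. The torsion-free classes $\cogen\{B\}$ for bricks $B$ are minimal extension closures, and infinitely many bricks give infinitely many torsion-free classes. So $\mod A$ is not $\tau$-tilting finite, and there are infinitely many support $\tau^-$-tilting subsets. If all of them consisted of finitely many finite length modules, we would get only finitely many (the classical $\tau$-tilting finite dichotomy, via brick-finiteness). Hence some support $\tau^-$-tilting subset, which is closed by \Cref{taumain}, contains an infinite length module or infinitely many finite length ones. Applying \Cref{kg} to it finishes the argument.
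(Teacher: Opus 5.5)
Your final plan is essentially the paper's proof. For (3)$\Rightarrow$(2) you use tameness and \Cref{tame}; note that tameness of algebras with defined Krull--Gabriel dimension is a known result due to Krause, and only domesticity is Prest's conjecture, so your initial hesitation is unfounded. For (1)$\Rightarrow$(3) you produce a support $\tau^-$-tilting subset, closed by \Cref{taumain}, that is not a finite set of finite length modules, and then apply \Cref{kg} and \Cref{genericbrick}. Your step ``we would get only finitely many'' is exactly what the paper obtains from Demonet--Iyama--Jasso (infinitely many bricks force a torsion-free class that is not functorially finite) together with Auslander--Smal\o{} ($\cogen$ of finitely many finite length modules is functorially finite), so the tentative detours through \Cref{generate} and $\bigcap_V \cogen(U\setminus V)$ can simply be dropped.
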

\begin{proof} Clearly (2) implies (1) in \Cref{strong}. Under the assumptions $A$ is tame, see for example \cite[Proposition 8.15]{Krause}. It follows that (2) and (3) are equivalent by \Cref{tame}. In fact, we only need to use that (3) implies (2). We are left to show that (1) implies (3).

By \cite[Theorem 1.4]{DIJ} the existence of infinitely many non-isomorphic bricks implies that there exists a torsion-free class $\mathcal{F}$ that is not functorially finite. Let $U\subseteq \Ind A$ be the associated support $\tau^-$-tilting subset. Then $U$ contains infinitely many finite length modules or an infinite length module, as otherwise $\mathcal{F} = \cogen U$ would be functorially finite, see \cite[Proposition 4.7]{Auslander0}. By \Cref{kg} the subset $U$ contains a generic module, which is necessarily $\tau^-$-rigid. It follows from \Cref{genericbrick} that $A$ admits a generic brick.
\end{proof}

So far, we have often considered generic bricks and generic $\tau^-$-rigid modules over tame algebras. It is surprising that these classes of modules coincide in this case.

\begin{thm}\label{weird} Let $A $ be a tame finite dimensional algebra over an algebraically closed field. Then, a generic $A$-module $X$ is $\tau^-$-rigid if and only if $X$ is a brick.
\end{thm}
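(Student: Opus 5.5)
Both directions use one fact about tame algebras: every generic $A$-module $X$ is $\tau$-invariant. This holds because $X$ is an accumulation point of infinitely many $\tau$-stable finite length modules (Crawley-Boevey), and $\tau$ is a homeomorphism by \Cref{homeo}; compare \cite[Corollary 5.16]{Krause0}. Hence $\tau^- X \cong X$, and $\{X\}$ is $\tau^-$-rigid if and only if every endomorphism of $X$ with finite length image vanishes, that is, $\Hom_A^{\mathrm{fin}}(X,X) = 0$.

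\emph{Brick $\Rightarrow$ $\tau^-$-rigid.} Suppose $X$ is a generic brick and $g\colon X \to X$ has finite length image. Since $\mathrm{End}_A(X)$ is a division ring, $g$ is either zero or an isomorphism. An isomorphism would have image $X$, which has infinite length. So $g = 0$, and $\{X\}$ is $\tau^-$-rigid.

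\emph{$\tau^-$-rigid $\Rightarrow$ brick.} Let $X$ be generic and $\tau^-$-rigid. By \Cref{genericbrick}(1), $S = \mathrm{soc}_{\mathrm{End}_A(X)}X$ is a generic brick. By the direction just proved, $S$ is $\tau^-$-rigid. Applying \Cref{inj} to $S$ shows that its socle over $\mathrm{End}_A(S)$ is $S$ itself, since $S$ is a brick. Still, injectivity of the assignment alone does not give $X \cong S$; we need a generic $\tau^-$-rigid module with the same socle as $X$, and $S$ is such a module only if its socle equals the socle of $X$, which is circular. So I would argue directly, via the classification of generic modules over tame algebras \cite{CB11}. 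Write $X \cong M \otimes_{k[t]_f} k(t)$, so that $D = \mathrm{End}_A(X)/\mathrm{rad}\,\mathrm{End}_A(X) \cong k(t)$. The goal is to show $\mathrm{rad}\,\mathrm{End}_A(X) = 0$, by showing that any nonzero radical endomorphism produces a nonzero endomorphism of $X$ with finite length image.

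This second direction is the main obstacle. My first attempt would use the exact sequence $0 \to S \to X \to Y \to 0$ in $\bar{\mathcal F}$ from \Cref{genericbrick}(1) together with the finite filtration $S=S_1\subsetneq\cdots\subsetneq S_n=X$ from the proof of \Cref{genericbrick}(2). If $n>1$, choose a nonzero radical endomorphism $\varphi$; it induces a nonzero map $S_2/S_1 \to S$, and hence a nonzero morphism $X \to X$ factoring through these subquotients. I would then pass to the finite-length specializations $M_\lambda = M\otimes k[t]/(t-\lambda)$: a radical endomorphism of $X$ is defined over some localization $M_g$, so it specializes to nonzero non-invertible endomorphisms of $M_\lambda$ for almost all $\lambda$. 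Because the $M_\lambda$ are $\tau$-invariant, the Auslander-Reiten formula (\Cref{arf}) gives $\mathrm{Ext}^1_A(M_\lambda, M_\lambda) \cong D\overline{\Hom}_A(M_\lambda,M_\lambda)\neq 0$. From these nonsplit self-extensions I would try to build a nonzero element of $\mathrm{Ext}^1_A(\cogen X, X)$, contradicting \Cref{rechar}. Concretely, I expect to realize some $M_\lambda$ as a finite length submodule of a product of copies of $X$, or at least in $\cogen X$, via the generic-point approximation of \Cref{generate}, and then push the extension into $X$. Checking that this pushed extension stays nonzero is the delicate point.
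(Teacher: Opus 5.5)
\textbf{Gap in the converse.} Your first direction (brick $\Rightarrow$ $\tau^-$-rigid) is exactly the paper's argument. The gap is in the converse, and it is self-inflicted. The injectivity argument you discard is not circular, and it is precisely how the paper concludes.

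The assignment of \Cref{inj} sends a generic $\tau^-$-rigid module $Z$ to $\mathrm{soc}_{\mathrm{End}_A(Z)}Z$. Apply it to two modules:
\begin{itemize}
\item[(a)] $X$, whose image is $S$ by the very definition of $S$;
\item[(b)] $S$ itself, which is generic by \Cref{genericbrick}(1) and $\tau^-$-rigid by the direction you already proved. Its image is $\mathrm{soc}_{\mathrm{End}_A(S)}S = S$, because $\mathrm{rad}\,\mathrm{End}_A(S)=0$.
\end{itemize}
Both socles are literally $S$, so nothing has to be assumed about one socle equalling the other. Injectivity gives $X\cong S$, hence $X$ is a brick. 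You had every ingredient in hand and only needed to compare the two images.

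\textbf{The replacement argument.} The route you sketch instead does not amount to a proof. The decisive step, producing a nonzero class in $\Ext^1_A(\cogen X, X)$, is left open as ``the delicate point,'' and as described it cannot succeed. The nonvanishing
\[
\Ext^1_A(M_\lambda,M_\lambda)\cong D\overline{\Hom}_A(M_\lambda,M_\lambda)\neq 0
\]
comes only from $\tau$-invariance and non-injectivity of $M_\lambda$. It therefore holds equally when $X$ \emph{is} a brick. For instance, for the Kronecker algebra of \Cref{kro}, the generic brick $G$ is $\tau^-$-rigid, yet the quasi-simple regular modules $R^\lambda_1$ have nonsplit self-extensions.

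So any contradiction would have to be extracted from the radical endomorphism $\varphi$, in a way your plan never specifies. Moreover, the endomorphism of $X$ you propose to build from $\varphi$ via $S_2/S_1\to S$ has no reason to have finite length image, so it does not contradict $\tau^-$-rigidity either. Drop this detour and close the argument with \Cref{inj} as above.
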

\begin{proof} Over a tame algebra, every generic $A$-module $X$ is the unique accumulation point of infinitely many $\tau$-stable finite dimensional $A$-modules inside $ \Ind A$ and so $X$ is $\tau$-stable by \Cref{homeo}. This was already observed in \cite[Corollary 5.16]{Krause0}. Now, if $X$ is also a brick, then every morphism $X\to X$ with a finite dimensional image must be a non-isomorphism and hence zero. Thus $X$ is $\tau^-$-rigid.

Suppose that $X$ is a generic $\tau^-$-rigid $A$-module and let $S = \mathrm{soc}_{\mathrm{End}_A(X)} X$ be the associated generic brick from the injective assignment in \Cref{inj}. Then $S$ is again $\tau^-$-rigid and the corresponding brick equals  $\mathrm{soc}_{\mathrm{End}_A(S)} S = S$ since $\mathrm{End}_A(S)$ is a division algebra. By injectivity $X\cong S$ and so $X$ is a brick.
\end{proof}

\begin{rem}\rm Let $A$ be a tame finite dimensional algebra over an algebraically closed field $k$. In the proof of (2)$\implies$(1) of \Cref{tame} we have seen that if a generic $A$-module $X$ is a brick, then $M(\lambda)$ is a brick for almost all $\lambda\in k$ where $M$ is the geometric realization of $X$. Slightly adapting the proof of (1)$\implies$(2) shows that if $M(\lambda)$ is a brick for infinitely many $\lambda \in k$ then $X$ is $\tau^-$-rigid and hence a brick by \Cref{weird}. This recovers the main result of \cite{BPS}. One has to use that $X$ is an accumulation point of the $M(\lambda)$ in $\Ind A$, see \cite[Theorem 9.4]{K}.
\end{rem}

\end{document}